\documentclass[11pt,amssymb]{amsart}
\usepackage{amssymb}
\usepackage[mathscr]{eucal}
\usepackage[all,cmtip]{xy}
\hoffset=-2.3cm \textwidth=16.5cm \voffset = -10mm

\newcommand{\PP}{\mathbb{P}}

\newcommand{\Z}{{\mathbb Z}}

\newcommand{\Q}{{\mathbb Q}}

\numberwithin{equation}{section}
\newcommand{\Br}{\mathrm{Br}}

\newcommand{\cO}{\mathscr{O}}

\newcommand{\Ga}{\mathrm{Gal}}
\newtheorem{thm}{Theorem}[section]
\newtheorem{lemma}[thm]{Lemma}
\newtheorem{prop}[thm]{Proposition}
\newtheorem{cor}[thm]{Corollary}

\newcommand{\cG}{\mathcal{G}}

\newcommand{\gen}{\mathbf{gen}}
\newcommand{\Hom}{\mathrm{Hom}}

\newcommand{\Div}{\mathrm{Div}}
\newcommand{\Pic}{\mathrm{Pic}}
\newcommand{\Spec}{\mathrm{Spec}}

\newcommand{\he}{H_{\mathrm{\acute{e}t}}}
\newcommand{\Xet}{X_{{\rm \acute{e}t}}}
\newcommand{\Yet}{Y_{{\rm \acute{e}t}}}
\begin{document}

\title[Genus of a division algebra]{On the size of the genus of a division algebra}

\author[V.~Chernousov]{Vladimir I. Chernousov}
\author[A.~Rapinchuk]{Andrei S. Rapinchuk}
\author[I.~Rapinchuk]{Igor A. Rapinchuk}

\begin{abstract}
Let $D$ be a central division algebra of degree $n$ over a field $K$. One defines the genus $\gen(D)$ as the set
of classes $[D'] \in \Br(K)$ in the Brauer group of $K$ represented by central division algebras $D'$ of degree $n$
over $K$ having the same maximal subfields as $D$. We prove that if the field $K$ is finitely generated and $n$ is
prime to its characteristic then $\gen(D)$ is finite, and give explicit estimations of its size in certain situations.
\end{abstract}

\address{Department of Mathematics, University of Alberta, Edmonton, Alberta T6G 2G1, Canada}

\email{vladimir@ualberta.ca}

\address{Department of Mathematics, University of Virginia,
Charlottesville, VA 22904-4137, USA}

\email{asr3x@virginia.edu}

\address{Department of Mathematics, Harvard University, Cambridge, MA, 02138 USA}

\email{rapinch@math.harvard.edu}

\maketitle

\hfill {\it To V.P.~Platonov on his 75th birthday}

\section{Introduction}\label{S:Intro}

Let $K$ be a field. For a finite-dimensional central simple algebra $A$ over $K$, we let $[A] \in \Br(K)$ denote the corresponding class in the Brauer group.
Given a central division algebra $D$ of degree $n$ over $K$, one defines the {\it genus} $\gen(D)$ as the set of  classes $[D'] \in \Br(K)$ represented by
division algebras $D'$ having the same maximal subfields\footnotemark as $D$ (cf. \cite{CRR1}, \cite{CRR2}; a survey can be found in \cite{CRR3}).
The goal of this paper is to give a detailed and effective proof of the following
result announced in \cite{CRR1}. \footnotetext{This means that $D'$ also has degree $n$, and a field extension $P/K$ of degree $n$ admits a $K$-embedding $P
\hookrightarrow D$ if and only if it admits a $K$-embedding $P \hookrightarrow D'$.}

\medskip

\noindent {\bf Theorem 1.} {\it Let $K$ be a finitely generated field, and let $n > 1$ be an integer prime to $\mathrm{char}\, K$. Then for any central division $K$-algebra
$D$ of degree $n$, the genus $\gen(D)$ is finite.}

\medskip

In \cite{CRR2}, the proof of the finiteness of $\gen(D)$ was reduced to the finiteness of a certain unramified Brauer group. In order to state the precise result, we need to recall some relevant definitions. Let $K$ be a field, $n > 1$ be an integer, and let $v$ be a discrete valuation of $K$. If the residue field $K^{(v)}$ is either perfect or of characteristic prime to $n$, there exists a {\it residue map} defined on the
$n$-torsion subgroup of the Brauer group:
$$
\rho_v \colon {}_n\Br(K) \to \Hom (\cG^{(v)}, \Z/ n \Z),
$$
where $\cG^{(v)}$ denotes the absolute Galois group of $K^{(v)}$ (cf. \cite[\S 10]{Salt} or
\cite[Ch.II, Appendix]{Serre}). We say that a class $[A] \in {}_n\Br(K)$ (or a finite-dimensional central simple $K$-algebra $A$ representing this class) is {\it unramified} at $v$ if $[A] \in \ker \rho_v$, and {\it ramified} otherwise. Furthermore, given a set $V$ of discrete valuations of $K$ such that the residue maps $\rho_v$ exist for all $v \in V$, the ($n$-{\it torsion of the}) {\it unramified Brauer group with respect to $V$} is defined as
$$
{}_n\Br(K)_V = \bigcap_{v \in V} \ker \rho_v.
$$
Assume now that a given finitely generated field $K$ is equipped with a set $V$ of discrete valuations that satisfies the following two conditions (the second of which
depends on $n$):

\medskip

\noindent (A) For any $a \in K^{\times}$, the set $V(a) := \{ v \in V \: \vert \: v(a) \neq 0 \}$ is finite;

\smallskip

\noindent (B) For any $v \in V$, the characteristic $\mathrm{char}\: K^{(v)}$ is prime to $n$ (then, in particular, $\rho_v$ is defined).

\medskip

\noindent It was shown in \cite{CRR2} that if $D$ is a central division $K$-algebra of degree $n$, then (i) the set $R(D)$ of those $v \in V$ where $D$ ramifies is finite;
(ii) if ${}_n\Br(K)_V$ is finite, then so is $\gen(D)$ and
\begin{equation}\label{E:genus1}
\vert \gen(D) \vert \leqslant \varphi(n)^r \cdot \vert {}_n\Br(K)_V \vert \ \  \text{with} \ \  r = \vert R(D) \vert,
\end{equation}
where $\varphi$ is the Euler function. Thus, Theorem 1 is a direct consequence of the following.

\medskip

\noindent {\bf Theorem 2.} {\it Let $K$ be a finitely generated field, and $n > 1$ be an integer prime to $\mathrm{char}\: K$. There exists a set $V$ of discrete valuations of $K$ that satisfies conditions {\rm (A)} and {\rm (B)} and for which the unramified Brauer group ${}_n\Br(K)_V$ is finite.}

%\medskip
%
%We will now outline a construction of a required set $V$ in the case where $\mathram{char}\: K = 0$. We begin by presenting $K$ as the field of rational functions
%$K = k(C)$ where $k$ is a purely transcendental of some number field $P$ (= the algebraic closure of $\Q$ in $K$) and $C$ is a smooth projective geometrically irreducible
%curve over $k$. Let $V_0$ be the set of (nontrivial) discrete valuations of $K$ that are trivial on $k$ (these correspond to the closed points of $C$).

\medskip

We will now outline the construction of a required set $V$ in the case where $K$ is the function field $k(C)$ of a smooth projective geometrically
irreducible curve $C$ over a number field $k$. (This construction, on the one hand, generalizes the explicit construction we gave in \cite[\S4]{CRR2} in the situation
where $n = 2$ and $C$ is an elliptic curve over $k$ with $k$-rational 2-torsion, and on the other hand extends relatively easily  to the general case - see below.) Let $V_0$
be the set of (nontrivial) discrete valuations of $K$ that are trivial on $k$ - these correspond to the closed points of $C$ and will be referred to as {\it geometric places}
in the sequel. Next, we choose a sufficiently large finite subset $S$ of the set $V^k$ of all valuations of $k$ that contains all the archimedean places and those nonarchimedean ones  which either divide $n$ or where $C$ does not have good reduction (see \S\ref{S:Intro}.2 below and \S\ref{S:PfT2}.2 for more details). Each $v \in T := V^k \setminus S$ has a canonical extension $\dot{v}$ to $K$, and we set $\dot{T} = \{ \dot{v} \: \vert \:  v \in  T \}$. Then $V := V_0 \cup \dot{T}$ is as required. In fact, not only do we prove that ${}_n\Br(K)_V$ is finite, but also give a rather explicit bound on its order. We describe the general method for obtaining such bounds in \S\ref{S:PfT2}.2, and here show only one sample statement.  Let $J$ be the Jacobian of $C$.

\medskip

\noindent {\bf Theorem 3.} (cf. Theorem \ref{T:ExplEst} for a more precise result) {\it Let $C$ be as above. Assume that $C(k) \neq \emptyset$ and
$J$ has $k$-rational $n$-torsion. Then the
order of ${}_n\Br(K)_V$ divides $n^{(2g+1)\vert S \vert} \cdot h_k(S)^{2g}$ where $g$ is the genus of $C$ and $h_k(S)$ is the class number of the ring $\mathcal{O}_k(S)$
of $S$-integers in $k$.}

\medskip

We will now indicate the strategy of the proof of Theorem 2 in the general case assuming that $\mathrm{char}\: K =0$ (the general case requires only minor technical - mostly
notaional - adjustments, see \S \ref{S:BrExactSeq}). We begin by presenting a given finitely
generated field $K$ as  a function field $k(C)$  where $k$ is a purely transcendental extension of some number field $P$ (= the algebraic closure of $\Q$ in $K$) and $C$ is a smooth projective geometrically irreducible curve over $k$. We again let $V_0$ denote the set of geometric places of $K$, and take $V = V_0 \cup \widetilde{V}_1$ where $\widetilde{V}_1$ consists of extensions to $K$ of the discrete valuations of $k$ from a certain specially constructed set of places $V_1$. For $v \in V_0$, one can define the residue map on the entire Brauer group
$$
\rho_v \colon \Br(K) \longrightarrow H^1(\mathcal{G}^{(v)} , \Q/\Z).
$$
The corresponding unramified Brauer group $\Br(K)_{V_0} = \bigcap_{v \in V_0} \ker \rho_v$ is by tradition denoted $\Br(K)_{\mathrm{ur}}$, and  is known to coincide with the geometric Brauer group $\Br(C)$ defined either in terms of Azumaya algebras or in terms of \'etale cohomology (see \cite[\S 6.4]{GiSz} and \cite{Lich}). The proof of Theorem 2 is based on an analysis of the well-known exact sequence for this group:
\begin{equation}\label{E:Out1}
\Br(k) \stackrel{\iota_k}{\longrightarrow} \Br(k(C))_{\rm ur} \stackrel{\omega_k}{\longrightarrow} H^1 (k,J)/\Phi (C,k),
\end{equation}
where $\iota_k$ is the natural map, $J$ is the Jacobian of $C$, and $\Phi (C,k)$ is a certain finite cyclic subgroup of $H^1(k,J)$ (see \S \ref{S:BrExactSeq}). More precisely, for  $V$ as above, we have the inclusion ${}_n\Br(K)_V \subset {}_n\Br(K)_{\mathrm{ur}}$, so proving the finiteness of ${}_n\Br(K)_V$ (and estimating its order) reduces to proving the finiteness of
$$
M := \iota_k^{-1}({}_n\Br(K)_V) \ \ \  {\rm and}  \ \ \  N :=\omega_k({}_n\Br(K)_V)
$$
(and estimating their respective orders). To establish the finiteness of $M$ (see Theorem \ref{T:FinGenF1}), one shows that $M$ contains the unramified Brauer group ${}_n\Br(k)_{V_1}$ as a subgroup of finite index. The finiteness of ${}_n\Br(k)_{V_1}$ is then derived from the construction of $V_1$; the argument here relies on the fact that $k$ is a {\it purely transcendental} extension of the number field $P$ and makes use of  the Faddeev exact sequence and the Albert-Hasse-Brauer-Noether theorem (cf. \S \ref{S:Constants}). In order to prove the finiteness of $N$, we  reduce the problem
to proving  the finiteness of the unramified cohomology group $H^1(k , {}_nJ)_{V_1}$, where ${}_nJ$ stands for the $n$-torsion of the Jacobian $J$ of the curve $C$. The latter is proved by an argument that imitates  the proof of the Weak Mordell-Weil Theorem (cf. \cite[ch. VIII, \S\S1-2]{Silv}).

After completing the proof of Theorem 2 as outlined above, we learned from J.-L.~Colliot-Th\'el\`ene that one can also derive it from results on \'etale cohomology. For this, one
realizes $K$ as the function field of a smooth arithmetic scheme $X$ on which $n$ is invertible, and lets $V$ to be the set of discrete valuations of $K$ associated with prime divisors on $X$. The proof of the finiteness of ${}_n\Br(K)_V$ is then derived from Deligne's finiteness theorem for the \'etale cohomology of constructible sheaves (see ``Th\'eor\`ems de finitude'' in \cite{Del}) and Gabber's purity theorem \cite{Fuj} - see the Appendix for the details. While this proof does not lead to explicit estimations on the size of the genus and/or the unramified Brauer group, it has the advantage of allowing more flexibility in the choice of $V$: for example, $X$ can be replaced by an open subscheme, enabling us to remove from $V$ any finite subset and still preserve  the finiteness of the corresponding unramified Brauer group. (In fact, this can also be accomplished using our argument.)

It should be noted that the assumption of finite generation of $K$ is essential for the finiteness of the genus - see \cite{Meyer} and \cite{Tikh} for a construction of division
algebras with infinite genus in the general situation.

\medskip

\noindent {\bf \ref{S:Intro}.2. Notations and conventions.} Given a discrete valuation $v$ of a field $k$, we let $\mathcal{O}_{k, v}$, or simply $\mathcal{O}_v$,  denote the valuation ring, and set  $k^{(v)}$ and $k_v$ to be the corresponding residue field and the completion of $k$ at $v$, respectively. Let $C$ be a smooth projective geometrically irreducible curve over  $k$. We say that $C$ has \emph{good reduction} at $v$ if there exists a smooth curve $\mathscr{C}$ over $\mathcal{O}_v$ with the
generic fiber $\mathscr{C} \times_{\mathcal{O}_v} k$ isomorphic to $C$ such that the special fiber (reduction) $\underline{C}^{(v)} := \mathscr{C} \times_{\mathcal{O}_v}
k^{(v)}$ is (smooth and) geometrically irreducible. Any such choice of the $\mathcal{O}_v$-model $\mathscr{C}$  determines a unique unramified extension $\dot{v}$ of $v$ to $k(C)$ (corresponding to the special fiber) with  residue field $k(C)^{(\dot{v})} = k^{(v)}(\underline{C}^{(v)})$.

If $k$ is a number field, we will use $V^k$ to denote set of all places  of $k$, and let $V^k_{\infty}$ denote the subset of  archimedean places.

For an integer $n > 1$ we let $\mu_n$ denote the group of $n$th roots of unity, and for an abelian group $A$, we let ${}_nA$ denote the $n$-torsion subgroup of $A$.
%Finally, throughout this paper we will work primarily with discrete valuations on finitely generated fields satisfying two natural conditions. More precisely, let $K$ be a finitely %generated field and fix an integer $n > 1.$ We will most often consider sets $V$ of discrete valuations of $K$ such that
%Now, given a finitely generated field $K$ and an integer $n > 1$, in this paper we will, generally speaking, consider sets of discrete valuations of $K$ satisfying the following two %conditions:
%In this paper, we will adopt the following conventions. Given a finitely generated field $K$ and an integer $n > 1$, we will consider sets $V$ of discrete valuations of $K$ satisfying the following two conditions:
%\vskip2mm
%\noindent (A)\: \parbox[t]{16cm}{\it For any $a \in K^{\times}$,
%the set $V(a) := \{ v \in V \: \vert \: v(a) \neq 0\}$ is finite;}
%\vskip1mm
%\noindent (B)\: \parbox[t]{16cm}{\it For any $v \in V$, the
%characteristic of $K^{(v)}$ is prime to $n$.}
%
%\vskip2mm
%\noindent (Notice that condition (B) is precisely what is needed to guarantee the existence of a residue map $\rho_v$ at $v$.)

\vskip5mm

%The paper is organized as follows.

\section{An exact sequence for the Brauer group of a curve}\label{S:BrExactSeq}

In this section, we review the construction of an exact sequence generalizing (\ref{E:Out1}) to  fields of
arbitrary characteristic mainly in order to describe our set-up carefully and introduce the necessary notations.
For a slightly different approach, we refer the reader to Lichtenbaum's paper \cite{Lich}, where one can also find a derivation of (\ref{E:Out1}) from the Hochschild-Serre spectral sequence in \'etale cohomology, as well as a proof of the fact that $\Br(k(C))_{\rm ur}$ coincides with Grothendieck's geometric Brauer group of the curve $C$ (see also \cite[Ch. III]{GaMeSe}).

Throughout this section, we will use the following notations. Let $k$ be an arbitrary field. Fix an algebraic closure $k^{\mathrm{alg}}$ of $k$,  let
$\bar{k} = k^{\mathrm{sep}}$ be the separable closure of $k$ in $k^{\mathrm{alg}}$ and set $\Gamma = \Ga(\bar{k}/k)$ to be the absolute Galois group $k$.
Now, let $C$ be a smooth projective geometrically irreducible curve defined over $k$ with function
field $K = k(C)$ and denote by $J = J(C)$ the Jacobian of $C$. We will construct an exact sequence of the form
\begin{equation}\label{E:Out1a}
\Br(k) \stackrel{\iota_k}{\longrightarrow} \Br'(k(C))_{\mathrm{ur}} \stackrel{\omega_k}{\longrightarrow} H^1(k , J)/\Phi(C , k)
\end{equation}
where $\Br'(k(C))_{\mathrm{ur}}$ is the unramified part of
$$
\Br'(k(C)) := \Br(\bar{k}(C)/k(C)).
$$
We recall that as follows from Tsen's theorem (cf. \cite[Theorem 6.2.8]{GiSz}),  $\Br(k^{\mathrm{alg}}(C)) = 0$. So,
$\Br'(k(C))$ contains ${}_n\Br(k(C))$ for any $n$ prime to $\mathrm{char}\: k$ making (\ref{E:Out1a}) applicable in our set-up. (Obviously,
$\Br'(k(C))$ also contains $\Br(k)$.) We have
$$
\Br'(k(C)) = H^2(\Gamma , \bar{k}(C)^{\times}).
$$
To make use of this description, we let $\bar{C} = C \times_k \bar{k}$ (so that $\bar{k}(C) = \bar{k}(\bar{C}) = K\bar{k}$), and
consider the following standard exact sequences of $\Gamma$-modules
\begin{equation}\label{E:Out2a}
0 \to \bar{k}^{\times} \longrightarrow \bar{k}(C)^{\times}
\longrightarrow \mathrm{P}(\bar{C}) \to 0
\end{equation}
and
\begin{equation}\label{E:Out3a}
0 \to \Div^0(\bar{C}) \longrightarrow \Div(\bar{C})
\stackrel{\delta}{\longrightarrow} \Z \to 0
\end{equation}
where $\Div(\bar{C})$ is the group of divisors of $\bar{C}$, $\delta$ is the degree map (recall that $C(\bar{k}) \neq \emptyset$ making $\delta$ surjective), and $\Div^0(\bar{C})$
and $\mathrm{P}(\bar{C})$ are the subgroups of $\Div(\bar{C})$ of degree zero and principal divisors, respectively. Then (\ref{E:Out2a}) gives rise to the following exact sequence
\begin{equation}\label{E:Out4a}
\Br(k) \stackrel{\iota_k}{\longrightarrow} \Br'(K)
\stackrel{\alpha}{\longrightarrow} H^2(\Gamma ,
\mathrm{P}(\bar{C})),
\end{equation}
where $\iota_k$ is the natural base change map.

Now, the inclusions $\mathrm{P}(\bar{C}) \hookrightarrow \Div^0(\bar{C}) \hookrightarrow \Div(\bar{C})$ induce homomorphisms
$$
H^2(\Gamma , \mathrm{P}(\bar{C})) \stackrel{\beta^0}{\longrightarrow} H^2(\Gamma , \Div^0(\bar{C})) \stackrel{\varepsilon}{\longrightarrow} H^2(\Gamma , \Div(\bar{C}))
$$
and we set $\beta = \varepsilon \circ \beta^0$. It follows from \cite[6.4]{GiSz}  that the unramified Brauer group $\Br'(K)_{\mathrm{ur}}$ coincides\footnote{We note that
for any closed point $P$ of C and the corresponding valuation $v_P$ of $K$, the associated residue map $\rho_{v_P}$ is defined on the entire group $\Br'(K)$ because every
class in the latter splits over the maximal unramified extension of the completion $K_{v_P}$, cf. \cite[Ch. XII]{Serre-LF}.} with
the kernel of
\begin{equation}\label{E:def-rho-a}
\rho := \beta \circ \alpha \colon \Br'(K) \to H^2(\Gamma , \Div(\bar{C})).
\end{equation}
Since $H^1(\Gamma , \Z) = 0$, it follows from the cohomological sequence associated with (\ref{E:Out3a}) that $\varepsilon$ is injective. Thus,
\begin{equation}\label{E:Out775a}
\Br'(K)_{\mathrm{ur}}  = \ker \rho^0 \ \ \text{where} \ \ \rho^0 = \beta^0 \circ \alpha.
\end{equation}
On the other hand, the standard exact sequence defining the Picard group of $\bar{C}$
\begin{equation}\label{E:Picard-a}
0 \to \mathrm{P}(\bar{C}) \longrightarrow \Div^0(\bar{C})
\longrightarrow \Pic^0(\bar{C}) \to 0
\end{equation}
gives rise to the bottom exact sequence in the following diagram
\begin{equation}\label{E:Out5a}
\xymatrix{& & \Br(k) \ar[d]^{\iota_k} & \\ & & \Br'(K) \ar[d]^{\alpha} & \\ H^1(\Gamma, \Div^0(\bar{C})) \ar[r]^{\varphi} & H^1(\Gamma , \Pic^0(\bar{C})) \ar[r]^{\gamma} &
H^2(\Gamma , {\rm P}(\bar{C})) \ar[r]^{\beta^0} & H^2(\Gamma , \Div^0(\bar{C})).\\}
\end{equation}
It is well-known that $\Pic^0 (\bar{C})$ and $J(\bar{k})$ are isomorphic as $\Gamma$-modules (cf. \cite{MiJac}), and we will henceforth
routinely identify  $H^1(\Gamma , \Pic^0(\bar{C}))$ with $H^1(k , J)$. Furthermore, set $\Phi(C , k) = \mathrm{Im} \: \varphi$.
Using (\ref{E:Out775a}), we obtain from (\ref{E:Out5a}) that for every $a \in \Br(K)_{\mathrm{ur}}$ there exists $b \in H^1(k , J)$ such that $\gamma(b) =
\alpha(a)$, and the coset
$$
\bar{b} := b + \Phi(C , k) \in H^1(k , J)/\Phi(C , k)
$$
is well-defined. Then the correspondence $a \mapsto \bar{b}$ defines the required homomorphism
$$
\omega_k \colon \Br'(K)_{\rm ur} \to H^1(k , J)/\Phi(C , k),
$$
and the exactness of the resulting sequence (\ref{E:Out1a}) follows from the exactness of (\ref{E:Out4a}).

\begin{lemma}\label{L:Out2}
$\Phi(C , k)$ is a finite cyclic group. It is trivial if $C(k) \neq
\emptyset$.
\end{lemma}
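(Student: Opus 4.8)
The plan is to identify $\Phi(C,k) = \mathrm{Im}\:\varphi$ as a quotient of the group $H^1(\Gamma, \Div^0(\bar{C}))$, and then to compute the latter explicitly. First I would feed the degree sequence (\ref{E:Out3a}) into cohomology; the relevant portion of the long exact sequence reads
$$
H^0(\Gamma, \Div(\bar{C})) \stackrel{\delta}{\longrightarrow} H^0(\Gamma, \Z) \longrightarrow H^1(\Gamma, \Div^0(\bar{C})) \longrightarrow H^1(\Gamma, \Div(\bar{C})).
$$
Here $H^0(\Gamma, \Div(\bar{C}))$ is the group of $k$-rational divisors on $C$, $H^0(\Gamma, \Z) = \Z$, and the left-hand arrow is the degree map. (Note that the upper portion of this same sequence was already used above to deduce the injectivity of $\varepsilon$.)

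The decisive step will be to show that $H^1(\Gamma, \Div(\bar{C})) = 0$. The point is that $\Div(\bar{C})$ is a \emph{permutation} $\Gamma$-module: it is free abelian on the closed points of $\bar{C}$, which $\Gamma$ permutes with orbits corresponding to the closed points of $C$. Decomposing $\Div(\bar{C})$ as the direct sum, over closed points $P$ of $C$, of the induced modules $\Z[\Gamma/\Gamma_P]$ (where $\Gamma_P$ is the stabilizer, i.e. the absolute Galois group of the residue field $k(P)$), Shapiro's lemma gives $H^1(\Gamma, \Z[\Gamma/\Gamma_P]) \cong H^1(\Gamma_P, \Z) = \Hom(\Gamma_P, \Z)$, which vanishes because $\Gamma_P$ is profinite and $\Z$ is torsion-free. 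Since Galois cohomology commutes with the filtered direct limit realizing this direct sum, one concludes $H^1(\Gamma, \Div(\bar{C})) = 0$.

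Granting this vanishing, the displayed sequence yields $H^1(\Gamma, \Div^0(\bar{C})) \cong \mathrm{coker}(\delta) \cong \Z/\delta_0\Z$, where $\delta_0$ is the index of $C$ (the positive generator of the image of the degree map, equivalently the gcd of the degrees of the closed points of $C$). This is a finite cyclic group, and as $\Phi(C,k) = \mathrm{Im}\:\varphi$ is a homomorphic image of it, $\Phi(C,k)$ is finite cyclic as well, which gives the first assertion. For the second assertion, if $C(k) \neq \emptyset$ then $C$ carries a $k$-rational point, hence a $k$-rational divisor of degree $1$, forcing $\delta_0 = 1$; thus $H^1(\Gamma, \Div^0(\bar{C})) = 0$ and therefore $\Phi(C,k) = 0$.

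The main obstacle is the careful justification of $H^1(\Gamma, \Div(\bar{C})) = 0$: one must verify both the permutation-module structure (including the identification of stabilizers with Galois groups of residue fields) and the interchange of cohomology with the infinite direct sum of discrete $\Gamma$-modules. Everything else is a formal consequence of the long exact sequence.
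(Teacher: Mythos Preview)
Your proposal is correct and follows essentially the same route as the paper's proof: both pass (\ref{E:Out3a}) to cohomology, invoke the permutation-module structure of $\Div(\bar{C})$ to obtain $H^1(\Gamma,\Div(\bar{C}))=0$, deduce that $H^1(\Gamma,\Div^0(\bar{C}))\simeq \Z/\delta(\Div(\bar{C})^{\Gamma})$, and conclude. Your write-up is in fact more detailed than the paper's, which simply asserts the vanishing of $H^1$ for a permutation module without spelling out the Shapiro/direct-limit justification you provide.
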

\begin{proof}
From (\ref{E:Out3a}), we obtain the exact sequence
$$
\Div(\bar{C})^{\Gamma} \stackrel{\delta}{\longrightarrow} \Z
\longrightarrow H^1(\Gamma , \Div^0(\bar{C})) \longrightarrow
H^1(\Gamma , \Div(\bar{C})) = 0,
$$
in which the vanishing of the term on the right is derived from the fact that
$\Div(\bar{C})$ is a permutation $\Gamma$-module.
%by combining Shapiro's Lemma
%with the observation that $H^1(\Gamma , \Z) = 0$.
%where the zero on the right is a consequence of Shapiro's Lemma.
So,
$$
H^1(\Gamma , \Div^0(\bar{C})) \simeq
\Z/\delta(\Div(\bar{C})^{\Gamma}),
$$
hence $\Phi(C , k)$ is a finite cyclic group. If $C(k) \neq
\emptyset$ then $\delta(\Div(\bar{C}))^{\Gamma} = \Z$, and $\Phi(C ,
k)$ is trivial.
\end{proof}

\vskip2mm

\noindent {\bf Remark 2.2.} (a) Let $\ell / k$ be a field extension and $C_{\ell} = C \times_k \ell$ be the base change of $C$ (viewed as a curve over $\ell$). It is well-known (see, e.g., \cite{MiJac}) that $J(C_{\ell}) = J(C)_{\ell}.$ It follows from our construction that  the restriction map $h \colon H^1(k , J) \to H^1(\ell , J_{\ell})$ satisfies
$h(\Phi(C , k)) \subset \Phi(C_{\ell} , \ell)$, hence gives rise to a map $h'$ in the following commutative diagram
$$
\xymatrix{\Br'(k(C))_{\rm ur} \ar[d]_g \ar[r]^{\omega_k \ \ \ \ \ } & H^1 (k, J)/\Phi (C,k) \ar[d]^{h'} \\ \Br'(\ell (C))_{\rm ur} \ar[r]^{\omega_{\ell} \ \ \ \ \ } & H^1 (\ell, J_{\ell})/ \Phi (C_{\ell}, \ell)}
$$
where $g$ is the natural map.
%$h'$ is induced by the restriction $h \colon H^1(k, J) \to H^1 (\ell, J_{\ell})$, and the maps $\omega_{k}$ and $\omega_{\ell}$ denote the map constructed above relative to $k$ and %$\ell$, respectively.

\vskip1mm

\noindent (b) Suppose that $C(k) \neq \emptyset$. Then
(\ref{E:Out2a}) admits a $k$-defined splitting which is constructed as
follows. Fix $p \in C(k)$, let $v_p$ be the corresponding discrete
valuation on $\bar{k}(C)$, and pick a uniformizer $\pi \in k(C)$ of $v_p$. 
Then
$$
\bar{k}(C)^{\times} \ni f \stackrel{\sigma}{\mapsto}
(f\pi^{-v_p(f)})(p) \in \bar{k}^{\times}
$$
is a required section. Equivalently, $\sigma$ can be described as
the composition of the natural embedding $\bar{k}(C) \hookrightarrow
\bar{k}(\!(\pi)\!)$ with the map $\bar{k}(\!(\pi)\!)^{\times} \to
\bar{k}^{\times}$ defined by
$$
a_m\pi^m + a_{m+1}\pi^{m+1} + \cdots \ \mapsto \ a_m
$$
(assuming that $a_m \neq 0$). The existence of $\sigma$ yields the
exactness of the following sequence extending (\ref{E:Out4a}):
$$
0 \to \Br(k) \stackrel{\iota_k}{\longrightarrow} \Br'(k(C))
\stackrel{\alpha}{\longrightarrow} H^2(\Gamma , \mathrm{P}(\bar{C}))
\to 0.
$$
According to Lemma \ref{L:Out2}, we have $\Phi(C , k) = 0$, so
(\ref{E:Out1a}) can be replaced with the exact sequence
$$
0 \to \Br(k) \stackrel{\iota_k}{\longrightarrow} \Br'(k(C))_{\rm ur}
\stackrel{\omega_k}{\longrightarrow} H^1(k , J) \to 0.
$$

\vskip2mm

As we already mentioned, if $n$ is prime to $\mathrm{char}\: k$ then ${}_n\Br(K) \subset \Br'(K)$. This
enables us to formulate the following statement which summarizes our approach to proving the finiteness of the
unramified Brauer group.

\addtocounter{thm}{1}

\begin{prop}\label{P:Out1}
Let $V$ be a set of discrete valuations of $K = k(C)$ containing
$V_0$, so that we have an inclusion ${}_n\mathrm{Br}(K)_V \subset {}_n\mathrm{Br}(K)_{\rm ur}$.
Assume that

\vskip2mm

\ {\rm (I)} \parbox[t]{16cm}{$\iota^{-1}_k({}_n\Br(K)_V)$
%\cap\Br(k)$
is finite,
%where $\Br'(k)$ is the subgroup of elements of
%order prime to $\mathrm{char}\: k$,
and}

\vskip1mm

{\rm (II)} $\omega_k({}_n\Br(K)_V)$ is finite.

\vskip2mm

\noindent Then ${}_n\Br(K)_V$ is finite and its order divides $\vert \iota^{-1}_k({}_n\Br(K)_V)\vert \cdot \vert \omega_k({}_n\Br(K)_V) \vert$.
\end{prop}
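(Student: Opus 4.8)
The plan is to deduce everything from the exactness of the sequence (\ref{E:Out1a}) at its middle term $\Br'(K)_{\rm ur}$ by a short counting argument. Write $B := {}_n\Br(K)_V$; since the hypothesis gives $B \subset {}_n\Br(K)_{\rm ur} \subset \Br'(K)_{\rm ur}$, both $\iota_k^{-1}(B)$ and the restriction $\omega_k\vert_B$ make sense. First I would restrict $\omega_k$ to $B$, obtaining a surjection of $B$ onto $\omega_k(B)$ whose target is finite by (II) and whose kernel is $B \cap \ker \omega_k$. By exactness of (\ref{E:Out1a}) we have $\ker \omega_k = \im \iota_k$, so this kernel equals $B \cap \im \iota_k$.

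Next I would identify $B \cap \im \iota_k$ with $\iota_k(\iota_k^{-1}(B))$: indeed $\iota_k(\iota_k^{-1}(B)) = B \cap \im \iota_k$ by the very definition of the preimage, and the restriction of $\iota_k$ to $\iota_k^{-1}(B)$ is a surjection onto $B \cap \im \iota_k$ with kernel exactly $\ker \iota_k$ (note that $\ker \iota_k \subset \iota_k^{-1}(B)$, since $0 \in B$). Hence
$$
\vert B \cap \im \iota_k \vert = \frac{\vert \iota_k^{-1}(B) \vert}{\vert \ker \iota_k \vert},
$$
which is finite by (I) and, in particular, divides $\vert \iota_k^{-1}(B) \vert$.

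Finally I would assemble the short exact sequence
$$
0 \longrightarrow B \cap \im \iota_k \longrightarrow B \stackrel{\omega_k}{\longrightarrow} \omega_k(B) \longrightarrow 0.
$$
Since both outer terms are finite, $B$ is finite with $\vert B \vert = \vert B \cap \im \iota_k \vert \cdot \vert \omega_k(B) \vert$; combining this with the divisibility obtained in the previous step shows that $\vert B \vert$ divides $\vert \iota_k^{-1}(B) \vert \cdot \vert \omega_k(B) \vert$, which is exactly the asserted bound.

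There is no serious obstacle here: the statement is a formal consequence of the exactness of (\ref{E:Out1a}) together with the two finiteness hypotheses, and the only point requiring a little care is to keep track of the factor $\vert \ker \iota_k \vert$ so as to obtain genuine divisibility of orders (rather than a mere inequality). All the substantive work is deferred to the later verification of hypotheses (I) and (II) for the specific set $V$ under consideration.
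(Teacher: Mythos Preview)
Your argument is correct and is exactly the formal diagram chase the paper has in mind: the proposition is stated in the paper without proof, as an immediate consequence of the exactness of (\ref{E:Out1a}) at $\Br'(K)_{\rm ur}$, and your write-up simply makes that consequence explicit. The only detail worth noting is that the finiteness of $\ker\iota_k$ (needed for the displayed quotient to make sense) is automatic from hypothesis (I), since $\ker\iota_k\subset\iota_k^{-1}(B)$; you implicitly use this and it is fine.
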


\section{Ramification at the valuations of the base field}\label{S:Constants}

The results of this section will be used  to verify  item (I) of Proposition \ref{P:Out1} for an appropriate set of discrete valuations (see \S\ref{S:FinGenF}.2).
We examine the following two situations: simple extensions of transcendence degree one of an arbitrary field and purely transcendental extensions of global fields. The consideration
of the first of these situations  involves  rather restrictive assumptions (cf. Proposition \ref{P:RC1}), so to conclude the proof of (I) we show that \emph{any} finitely generated
field can be obtained as a combination of these two types of extensions in such a way  that all the additional required conditions are satisfied (see Proposition \ref{P:FGF1}).

\medskip

%In this section, we will develop some tools that will later be used to verify item (I) of Proposition \ref{P:Out1} for an appropriate set of places $V$ of $K$.
%Our discussion will consist of two parts. First, when $K = k(C)$ is the function field of a smooth projective curve, we construct a set of valuations $V$ on $K$ such that
%$\iota^{-1}_k({}_n\Br(K)_V)$ contains ${}_n\Br(k)_{W}$ as a subgroup of finite index,
%prime to $\mathrm{char}\: k$,
%where $W$ consists of the
%(nontrivial) restrictions of the places in $V$ to $k$ (see Proposition \ref{T:RC1}). Second, in Proposition \ref{L:RC2-1}, we use Faddeev's exact sequence to relate the unramified Brauer group with respect to a suitable set of discrete valuations of a purely transcendental field extension to the unramified Brauer group of the base field.
%As we indicated in \S \ref{S:Intro}, the strategy basically consists of showing that
%$\iota^{-1}_k({}_n\Br(K)_V)$ contains ${}_n\Br(k)_{W}$ as a subgroup of finite index,
%prime to $\mathrm{char}\: k$,
%where $W$ consists of the
%(nontrivial) restrictions of the places in $V$ to $k$, while ensuring
%at the same time through the construction of $V$ that
%${}_n\Br(k)_{W}$ is in fact finite.

\noindent {\bf \ref{S:Constants}.1. Ramification and base change.} First, we recall the following well-known general fact. Let $k$ be a field equipped with a (nontrivial) discrete valuation $v$, and let $n > 1$ be an integer relatively prime to $\mathrm{char} \: k^{(v)}$. Furthermore, let $\ell/k$ be a field extension, $w$ be an extension of $v$ to $\ell$ and $\ell^{(w)}$ be the corresponding residue field (as usual, we will identify $k^{(v)}$ with a subfield of $\ell^{(w)}$). We let $(k^{(v)})^{\mathrm{sep}} \subset (\ell^{(w)})^{\mathrm{sep}}$ denote the corresponding separable closures, and let $e = [w(\ell^{\times}) : v(k^{\times})]$ be the ramification index (assumed to be finite).
\addtocounter{thm}{1}
\begin{lemma}\label{L:RC1}
{\rm (\cite[Theorem 10.4]{Salt})} The diagram
\begin{equation}\label{E:RC777}
\xymatrix{ {}_n\Br(k) \ar[rr]^(.30){\rho_v} \ar[d]_{{}_n\iota} & &  \mathrm{Hom}(\Ga((k^{(v)})^{\mathrm{sep}}/k^{(v)}) , \Z/n\Z) \ar[d]^{[e]} \\ {}_n\Br(\ell) \ar[rr]^(.30){\rho_w} & & \mathrm{Hom}(\Ga(({\ell}^{(v)})^{\mathrm{sep}}/{\ell}^{(v)}) , \Z/n\Z)}
\end{equation}
%\begin{equation}\label{E:RC777}
%\begin{array}{ccc}
%{}_n\Br(k) & \stackrel{\rho_v}{\longrightarrow} &
%\mathrm{Hom}(\Ga((k^{(v)})^{\mathrm{sep}}/k^{(v)}) , \Z/n\Z) \\
%{}_n\iota \downarrow & & \hskip5mm \downarrow [e] \\
%{}_n\Br(\ell) & \stackrel{\rho_w}{\longrightarrow} &
%\mathrm{Hom}(\Ga((\ell^{(w)})^{\mathrm{sep}}/\ell^{(w)}) , \Z/n\Z)
%\end{array},
%\end{equation}
where $_{n}\iota \colon {}_n\Br(k) \to {}_n\Br(\ell)$ is the canonical map, $\rho_v$ and $\rho_w$ are the corresponding residue maps, and $[e]$
is the map induced by the restriction $\Ga((\ell^{(w)})^{\mathrm{sep}}/\ell^{(w)}) \to \Ga((k^{(v)})^{\mathrm{sep}}/k^{(v)})$ followed by multiplication
by the ramification index $e$, is commutative.
\end{lemma}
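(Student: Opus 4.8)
The plan is to reduce to the case where $k$ and $\ell$ are complete, and there to realize both residue maps as the maps on $H^2$ induced by the valuations; the ramification index $e$ then enters transparently through the comparison of the two value groups. First I would observe that $\rho_v$ factors through passage to the completion $k_v$, and likewise $\rho_w$ through $\ell_w$, while the pair $(\ell/k , w)$ induces a continuous homomorphism $k_v \to \ell_w$ compatible with the base-change map ${}_n\iota$ on Brauer groups. Hence one may assume $k = k_v$ and $\ell = \ell_w$ are complete. Write $F = k^{(v)}$ and $F' = \ell^{(w)}$ for the (unchanged) residue fields, and fix compatible separable closures $F^{\mathrm{sep}} \subset (F')^{\mathrm{sep}}$, so that $\cG' := \Ga((F')^{\mathrm{sep}}/F') \to \cG := \Ga(F^{\mathrm{sep}}/F)$ is precisely the restriction homomorphism appearing in $[e]$.

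Next I would recall the cohomological description of the residue map over a complete field. Let $k^{\mathrm{nr}}$ be the maximal unramified extension of $k$; its Galois group is canonically $\cG$ and its residue field is $F^{\mathrm{sep}}$. Since $F^{\mathrm{sep}}$ is separably closed, the prime-to-$\mathrm{char}\: F$ part of $\Br(k^{\mathrm{nr}})$ vanishes, and as $n$ is prime to $\mathrm{char}\: F$ we obtain ${}_n\Br(k) \subseteq \Br(k^{\mathrm{nr}}/k) = H^2(\cG , (k^{\mathrm{nr}})^{\times})$, with the analogous statement for $\ell$, $\ell^{\mathrm{nr}}$, $\cG'$. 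The normalized valuation gives a $\cG$-equivariant map $v \colon (k^{\mathrm{nr}})^{\times} \to \Z$, and $\rho_v$ is the restriction to $n$-torsion of the composite $H^2(\cG , (k^{\mathrm{nr}})^{\times}) \stackrel{v_*}{\longrightarrow} H^2(\cG , \Z) \cong \Hom(\cG , \Q/\Z)$, the isomorphism coming from $0 \to \Z \to \Q \to \Q/\Z \to 0$ together with the vanishing of $H^i(\cG , \Q)$ for $i \geqslant 1$. Under these identifications, ${}_n\iota$ becomes the restriction map $H^2(\cG , (k^{\mathrm{nr}})^{\times}) \to H^2(\cG' , (\ell^{\mathrm{nr}})^{\times})$ induced by $\cG' \to \cG$ and the inclusion $k^{\mathrm{nr}} \hookrightarrow \ell^{\mathrm{nr}}$.

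The key step is then the comparison of the two valuations. Both $v$ and $w$ are normalized to have value group $\Z$, and since $k^{\mathrm{nr}}/k$ and $\ell^{\mathrm{nr}}/\ell$ are unramified, the ramification index of $\ell^{\mathrm{nr}}/k^{\mathrm{nr}}$ is again $e$; consequently the restriction of $w$ to $(k^{\mathrm{nr}})^{\times}$ equals $e \cdot v$. This produces a commutative square of $\cG$-modules whose left vertical arrow is the inclusion $(k^{\mathrm{nr}})^{\times} \hookrightarrow (\ell^{\mathrm{nr}})^{\times}$ and whose right vertical arrow is multiplication by $e$ on $\Z$. Applying $H^2$ and combining with the identifications above, the resulting outer rectangle reads $\rho_w \circ {}_n\iota = [e] \circ \rho_v$, because under $H^2(- , \Z) \cong \Hom(- , \Q/\Z)$ the operation ``restrict along $\cG' \to \cG$, then multiply by $e$'' is exactly $[e]$. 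I expect the main obstacle to be bookkeeping rather than conceptual: one must verify carefully that the cohomological formula for $\rho_v$ via $v_*$ coincides with the residue map of \cite[\S 10]{Salt}, and that this description is compatible both with passage to the completion and with ${}_n\iota$. Once the valuation-theoretic identity $w\vert_{(k^{\mathrm{nr}})^{\times}} = e \cdot v$ is in place, the appearance of the factor $e$ is automatic.
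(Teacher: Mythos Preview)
Your argument is correct and is essentially the standard proof; the paper itself does not give a proof of this lemma, simply citing \cite[Theorem 10.4]{Salt}. The reduction to complete fields, the cohomological description of $\rho_v$ via $v_* \colon H^2(\cG,(k^{\mathrm{nr}})^{\times}) \to H^2(\cG,\Z) \cong \Hom(\cG,\Q/\Z)$, and the key identity $w\vert_{(k^{\mathrm{nr}})^{\times}} = e \cdot v$ are exactly the ingredients in Saltman's treatment (and in \cite[Ch.~XII]{Serre-LF}), so there is nothing to add beyond the bookkeeping you already flag.
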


\medskip

\begin{cor}\label{C:RC1}
In the above notations, set
$$
r = [{}_n\iota^{-1}({}_n\Br(\ell)_{\{ w \}}) :
{}_n\Br(k)_{\{v\}}] \ \ \text{and} \ \ d = [\ell^{(w)} \cap
(k^{(v)})^{\mathrm{sep}} : k^{(v)}].
$$
If $e = 1$, then $r \leqslant d$; in particular, if $d$ is finite then so is $r$, and moreover $r = 1$ if $d = 1$.
\end{cor}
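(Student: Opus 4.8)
The plan is to use the commutative diagram of Lemma~\ref{L:RC1} to translate the index $r$ into a statement purely about the residue map $\rho_v$ and the restriction map on character groups, and then to bound the relevant kernel by $d$ using elementary Galois theory. First I would record the two subgroups occurring in $r$. By definition ${}_n\Br(k)_{\{v\}} = \ker\rho_v$. For the larger group, since $e = 1$ the right-hand vertical map $[e]$ in $(\ref{E:RC777})$ is exactly the restriction map $\mathrm{res}\colon \Hom(\cG^{(v)}, \Z/n\Z) \to \Hom(\cG^{(w)}, \Z/n\Z)$ induced by the natural homomorphism $\theta\colon \cG^{(w)} \to \cG^{(v)}$, where $\cG^{(v)} = \Ga((k^{(v)})^{\mathrm{sep}}/k^{(v)})$ and $\cG^{(w)} = \Ga((\ell^{(w)})^{\mathrm{sep}}/\ell^{(w)})$. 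Commutativity gives $\rho_w \circ {}_n\iota = \mathrm{res}\circ\rho_v$, so $a \in {}_n\Br(k)$ satisfies ${}_n\iota(a) \in \ker\rho_w$ if and only if $\rho_v(a) \in \ker\mathrm{res}$; hence ${}_n\iota^{-1}({}_n\Br(\ell)_{\{w\}}) = \rho_v^{-1}(\ker\mathrm{res})$, a group visibly containing $\ker\rho_v$.

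Next I would compute the index. The map $\rho_v$ induces an injection of $\rho_v^{-1}(\ker\mathrm{res})/\ker\rho_v$ into $\ker\mathrm{res}$ whose image is $\im\rho_v \cap \ker\mathrm{res}$; therefore $r = |\im\rho_v \cap \ker\mathrm{res}| \leqslant |\ker\mathrm{res}|$. So everything reduces to bounding $|\ker\mathrm{res}|$ by $d$.

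The crux is to identify and bound $\ker\mathrm{res}$. A character $\chi$ lies in $\ker\mathrm{res}$ exactly when $\chi$ vanishes on $H := \im\theta$. By the standard description of the Galois group of a composite extension, $\theta$ is surjective onto $\Ga((k^{(v)})^{\mathrm{sep}}/L)$, where $L = \ell^{(w)} \cap (k^{(v)})^{\mathrm{sep}}$; thus $H = \Ga((k^{(v)})^{\mathrm{sep}}/L)$ has index $[L : k^{(v)}] = d$ in $\cG^{(v)}$. Any $\chi$ with $\chi|_H = 0$ factors through $\cG^{(v)}/N$, where $N$ is the closed normal subgroup generated by $H$; since $N \supseteq H$ we have $[\cG^{(v)} : N] \leqslant d$, so $\cG^{(v)}/N$ is a finite group of order at most $d$. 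As $|\Hom(G, \Z/n\Z)| \leqslant |G|$ for every finite group $G$, this gives $|\ker\mathrm{res}| = |\Hom(\cG^{(v)}/N, \Z/n\Z)| \leqslant d$, and hence $r \leqslant d$. The remaining assertions follow at once: finiteness of $d$ forces finiteness of $r$, while $d = 1$ gives $L = k^{(v)}$, so $H = \cG^{(v)}$, $\ker\mathrm{res} = 0$, and therefore $r = 1$.

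The only genuinely substantive point, and the one I would flag as the main obstacle, is the third step: correctly identifying the image of $\theta$ and recognizing that the characters vanishing on an index-$d$ subgroup number at most $d$. This is precisely where the hypothesis $e = 1$ is essential, since it guarantees that $[e]$ is honest restriction rather than restriction composed with multiplication by $e$ (the latter would enlarge the kernel and break the bound). The first two steps are formal: they are just diagram-chasing in $(\ref{E:RC777})$ together with the elementary isomorphism theorem for $\rho_v$.
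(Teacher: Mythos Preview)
Your argument is correct and is precisely the ``simple diagram chase'' the paper alludes to: the paper's one-line proof reads ``if $e = 1$ then the kernel of $[e]$ in (\ref{E:RC777}) has order dividing $d$, and our claim follows from a simple diagram chase,'' and you have unpacked both halves of that sentence. The only minor difference is that the paper asserts the kernel has order \emph{dividing} $d$ (which your argument in fact gives, since $[\cG^{(v)}:N]$ divides $[\cG^{(v)}:H]=d$ and $|\Hom(G,\Z/n\Z)|$ divides $|G|$ for finite $G$), whereas you state only $\leqslant d$; but the weaker inequality is all the corollary claims.
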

Indeed, if $e = 1$ then the kernel of $[e]$ in (\ref{E:RC777}) has order dividing $d$, and our claim follows from a simple diagram chase.

\medskip

\addtocounter{thm}{1}

\noindent {\bf \ref{S:Constants}.4. Geometric statement.} Let $C$ be a smooth projective geometrically irreducible curve over a field $k$, and let $v$
be a discrete valuation of $k$. Assume that $C$ has good reduction at $v$ (cf. \ref{S:Intro}.2), fix an $\mathcal{O}_v$-model $\mathscr{C}$, and let
$\dot{v}$ denote the canonical unramified extension of $v$ to the field of rational functions $k(C)$ determined by the closed fiber of $\mathscr{C}$. Recall that $k(C)^{\dot{v}} =
k^{(v)}(\underline{C}^{(v)})$. Since
%We say that $C$ has \emph{good reduction} at $v$ if there exists a smooth curve $\mathscr{C}$ over $\mathcal{O}_v$ with the
%generic fiber $\mathscr{C} \times_{\mathcal{O}_v} k$ isomorphic to $C$ such that the special fiber (reduction) $\underline{C}^{(v)} := \mathscr{C} \times_{\mathcal{O}_v}
%k^{(v)}$ is geometrically irreducible. Any such choice of $\mathscr{C}$ (which we fix) determines a unique unramified extension $\tilde{v}$ of $v$ to $k(C)$ with  residue
%field $k(C)^{(\tilde{v})} = k^{(v)}(\underline{C}^{(v)})$.
$C$ has good reduction at $v$, the residue field $k^{(v)}$ is algebraically closed in $k(C)^{(\dot{v})}$, and
we see from Corollary \ref{C:RC1} that
$$
{}_n\iota^{-1}({}_n\Br(k(C))_{\{ \dot{v} \}}) = {}_n\Br(k)_{\{ v \}}.
$$
Thus, we obtain the following.
\begin{prop}\label{P:RC333}
Let $C$ be a smooth projective geometrically irreducible curve over a field $k$, and let $V$ be a set of discrete valuations of $k$ such that $C$ has good reduction at all $v \in V$. Set $$\dot{V} = \{ \dot{v} \: \vert \: v \in V \},$$ where $\dot{v}$ is the canonical extension of $v$ to $k(C)$. Then for the natural map ${}_n\iota \colon {}_n\Br(k) \to {}_n\Br(k(C))$ we have
$$
{}_n\iota^{-1}({}_n\Br(k(C))_{\dot{V}}) = {}_n\Br(k)_V.
$$
\end{prop}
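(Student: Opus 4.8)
The plan is to reduce the asserted global equality to the single-place identity
$$
{}_n\iota^{-1}({}_n\Br(k(C))_{\{\dot v\}}) = {}_n\Br(k)_{\{v\}},
$$
which has just been recorded above for each individual $v$, and then to observe that forming the ${}_n\iota$-preimage commutes with intersection. First I would unwind the definitions: by definition of the unramified Brauer group relative to a set of valuations,
$$
{}_n\Br(k(C))_{\dot V} = \bigcap_{v \in V} \ker \rho_{\dot v} = \bigcap_{v \in V} {}_n\Br(k(C))_{\{\dot v\}}, \qquad {}_n\Br(k)_V = \bigcap_{v \in V} {}_n\Br(k)_{\{v\}}.
$$
Since ${}_n\iota \colon {}_n\Br(k) \to {}_n\Br(k(C))$ is a single fixed homomorphism, taking its preimage commutes with arbitrary intersections, so
$$
{}_n\iota^{-1}({}_n\Br(k(C))_{\dot V}) = {}_n\iota^{-1}\Big(\bigcap_{v \in V} {}_n\Br(k(C))_{\{\dot v\}}\Big) = \bigcap_{v \in V} {}_n\iota^{-1}({}_n\Br(k(C))_{\{\dot v\}}).
$$

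Next I would invoke the single-place identity for each $v \in V$. Because $C$ has good reduction at $v$, the canonical extension $\dot v$ of $v$ is unramified, so the ramification index is $e = 1$, and $k^{(v)}$ is algebraically closed in $k(C)^{(\dot v)}$, so the degree $d = [k(C)^{(\dot v)} \cap (k^{(v)})^{\mathrm{sep}} : k^{(v)}]$ of Corollary \ref{C:RC1} equals $1$; that corollary then gives ${}_n\iota^{-1}({}_n\Br(k(C))_{\{\dot v\}}) = {}_n\Br(k)_{\{v\}}$. Substituting this into the intersection above yields
$$
{}_n\iota^{-1}({}_n\Br(k(C))_{\dot V}) = \bigcap_{v \in V} {}_n\Br(k)_{\{v\}} = {}_n\Br(k)_V,
$$
which is the assertion.

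I expect no genuine obstacle in the globalization step itself: once the local statement is available, the only thing used is the set-theoretic fact that $f^{-1}(\bigcap_i A_i) = \bigcap_i f^{-1}(A_i)$, valid for any map $f$ irrespective of injectivity. All the real content sits in the single-place identity, hence in Corollary \ref{C:RC1}. There, the commutativity of the diagram in Lemma \ref{L:RC1} already forces the inclusion ${}_n\Br(k)_{\{v\}} \subseteq {}_n\iota^{-1}({}_n\Br(k(C))_{\{\dot v\}})$ in complete generality, and it is precisely the good-reduction hypothesis—via $e = 1$ and $d = 1$—that upgrades this inclusion to an equality. Thus the crux, already handled above, is the geometric input that good reduction guarantees $d = 1$, i.e. that $k^{(v)}$ is separably (equivalently, algebraically) closed in the residue field $k(C)^{(\dot v)} = k^{(v)}(\underline{C}^{(v)})$ of the geometrically irreducible special fiber.
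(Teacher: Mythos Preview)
Your proof is correct and follows exactly the paper's approach: the paper establishes the single-place identity ${}_n\iota^{-1}({}_n\Br(k(C))_{\{\dot v\}}) = {}_n\Br(k)_{\{v\}}$ via Corollary~\ref{C:RC1} (using $e=1$ and $d=1$ from good reduction) in the paragraph immediately preceding the proposition, and then states the global result as an immediate consequence. You have simply made the passage from the local to the global statement more explicit by spelling out that preimages commute with intersections.
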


This statement is usually used in conjunction with the following well-known result (cf. \cite[Prop. A.9.1.6]{HinSilv}, \cite{Shim}).
\begin{prop}\label{P:RC334}
Let $k$ be a field equipped with a set $V$ of discrete valuations that satisfies condition {\rm (A)} (see \S\ref{S:Intro}). Given a smooth projective
geometrically irreducible curve $C$ over $k$, there exists a finite subset $V(C) \subset V$ such that $C$ has good reduction at all
$v \in V \setminus V(C)$.
\end{prop}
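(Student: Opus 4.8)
The plan is to reduce the assertion to a single ``spreading out'' statement combined with condition (A). The key observation is this: if $R \subset k$ is a finitely generated subring over which $C$ admits a smooth projective model with geometrically irreducible fibers, then $C$ has good reduction at \emph{every} $v \in V$ whose valuation ring $\mathcal{O}_v$ contains $R$. Indeed, the inclusion $R \hookrightarrow \mathcal{O}_v$ is a ring homomorphism, and base-changing the model along $\Spec \mathcal{O}_v \to \Spec R$ yields a smooth projective $\mathcal{O}_v$-scheme $\mathscr{C}$ with generic fiber $C$; its special fiber is obtained from a (smooth, geometrically irreducible) fiber of the model over the center $\mathfrak{p}$ of $v$ on $R$ by the field extension $\kappa(\mathfrak{p}) \hookrightarrow k^{(v)}$, hence remains smooth and geometrically irreducible. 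So it suffices to produce such an $R$ and then to show that $R \subset \mathcal{O}_v$ for all but finitely many $v$.

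To construct $R$, I would fix a closed immersion $C \hookrightarrow \PP^N_k$ and generators $f_1, \dots, f_s$ of the homogeneous ideal of $C$. Let $R_0 \subset k$ be the subring generated over the prime ring ($\Z$ or $\F_p$) by the finitely many coefficients of the $f_j$, and let $\mathscr{C}_0 \subset \PP^N_{R_0}$ be the closed subscheme they cut out, a projective $R_0$-scheme with $\mathscr{C}_0 \times_{R_0} k \cong C$. By the standard constructibility and openness theorems for finitely presented morphisms (EGA IV), the locus of points of $\Spec R_0$ over which the fibers of $\mathscr{C}_0$ are smooth of relative dimension one and geometrically irreducible is open; it contains the generic point, since $C$ is smooth and geometrically irreducible over $k$ and these are geometric properties stable under the field extension $\mathrm{Frac}(R_0) \hookrightarrow k$. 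Hence there is a nonzero $g \in R_0$ such that, setting $R = R_0[1/g]$ and $\mathscr{C} = \mathscr{C}_0 \times_{R_0} R$, the morphism $\mathscr{C} \to \Spec R$ is smooth and projective with geometrically irreducible fibers. This $R$ is the required subring.

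Finally, I would write $R = R_0[1/g]$ as generated over the prime ring by finitely many elements $s_1, \dots, s_q \in k^{\times}$ (one may take the coefficients of the $f_j$ together with $1/g$). Put
$$
V(C) \ := \ \bigcup_{j=1}^{q} V(s_j),
$$
which is a \emph{finite} subset of $V$ by condition (A). For any $v \in V \setminus V(C)$ we have $v(s_j) = 0$ for all $j$, so each $s_j$ is a unit in $\mathcal{O}_v$ and therefore $R \subset \mathcal{O}_v$. By the first paragraph, $C$ then has good reduction at $v$, which proves the proposition.

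The only nonelementary ingredient — and the step I expect to be the main obstacle to make fully rigorous — is the spreading-out assertion that smoothness, relative dimension one, and \emph{geometric} irreducibility of the fibers all hold over a dense open of $\Spec R_0$. Geometric irreducibility is the delicate point, as it is precisely what guarantees that the special fiber stays irreducible after the further base change to the possibly larger residue field $k^{(v)}$; the remaining conditions are routine openness statements. Alternatively, one can sidestep the general machinery and argue concretely via the Jacobian criterion, enlarging $V(C)$ to include the finitely many $v$ at which a suitable discriminant or resultant fails to be a unit, which is essentially the route of \cite[Prop. A.9.1.6]{HinSilv} and \cite{Shim}.
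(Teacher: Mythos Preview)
Your argument is correct. The spreading-out step is standard: smoothness of the fibers is open on the base, and geometric irreducibility of the fibers is constructible and contains the generic point, so both hold over some principal open $\Spec R_0[1/g]$; the rest is exactly the use of condition (A) that you describe. The only cosmetic point is that some of your generators $s_j$ (the coefficients of the $f_j$) might be zero, so one should only form $V(s_j)$ for the nonzero ones; this does not affect the argument.

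As for comparison with the paper: the paper does \emph{not} actually prove Proposition~\ref{P:RC334}. It simply states it as well known and cites \cite[Prop.~A.9.1.6]{HinSilv} and \cite{Shim}. The only further hint the paper gives is the remark preceding Lemma~\ref{L:RC3} that this lemma on reduction of absolutely irreducible polynomials ``in fact plays a crucial role in the proof of Proposition~\ref{P:RC334}'', which points toward the concrete elimination-theoretic route you mention in your final paragraph (working with an explicit affine equation, its discriminant, and the finitely many $v$ at which these fail to be integral or units). So your main argument is the more conceptual scheme-theoretic version, while the paper has in mind the hands-on approach you sketch as an alternative; both are valid, and you have identified the same references the paper relies on.
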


\medskip

We will next develop a more explicit algebraic version of Proposition \ref{P:RC333}.

\addtocounter{thm}{1}

\medskip

\noindent {\bf \ref{S:Constants}.7. Algebraic statement.}
%We will now apply this statement to the following situation with which we will be concerned throughout this subsection.
Let $K = k(x , y)$, where $x$ is transcendental over $k$ and $y$ satisfies a relation $F(x , y) = 0$ with $F(X , Y) \in k[X , Y]$  an absolutely irreducible polynomial in variables $X$ and $Y$ of the form
$$
F(X , Y) = Y^m + f_{m-1}(X)Y^{m-1} + \cdots + f_0(X) \ \ \text{with} \ \ f_i(X) \in k[X].
$$
Furthermore, we let $\delta(X) \in k[X]$ denote the discriminant of $F$ as a polynomial in $Y$, and assume that $\delta \neq 0$. Given a discrete valuation $v$ of $k$, we consider its standard extension $\hat{v}$ to $k(x)$ given on nonzero polynomials by
\begin{equation}\label{E:Val1}
\hat{v}(a_mx^m + \cdots + a_0) = \min_{a_i \neq 0} v(a_i)
\end{equation}
(cf. \cite[Ch. 6, \S 10]{Bour}), and let $\tilde{v}$ denote an arbitrary extension of $\hat{v}$ to $K$.
\begin{prop}\label{P:RC777}
Let ${}_n\iota_k \colon {}_n\Br(k) \to {}_n\Br(K)$ be the canonical map. If  $\hat{v}(f_i(x)) \geqslant 0$ for all $i = 0, \ldots ,
m-1$, and $\hat{v}(\delta(x)) = 0$, then $$r :=
[{}_n\iota_k^{-1}({}_n\Br(K)_{\{ \tilde{v} \}}) : {}_n\Br(k)_{\{ v
\}}]$$  is finite. Moreover, if the reduction $F^{(v)}(X , Y) \in k^{(v)}[X , Y]$ is
absolutely irreducible, then $r = 1$, and in this case $\hat{v}$ extends to $K$ uniquely.
\end{prop}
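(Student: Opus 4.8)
The plan is to deduce the statement from Corollary \ref{C:RC1} applied to the extension $K/k$ equipped with the valuations $v$ and $\tilde{v}$; the whole point is to check that the hypotheses $\hat{v}(f_i(x)) \geqslant 0$ and $\hat{v}(\delta(x)) = 0$ force the ramification index $e = [\tilde{v}(K^{\times}) : v(k^{\times})]$ to equal $1$, and then to control the quantity $d = [K^{(\tilde{v})} \cap (k^{(v)})^{\mathrm{sep}} : k^{(v)}]$ that governs $r$. I would organize the argument around the tower $v \subset \hat{v} \subset \tilde{v}$, treating the two stages $k \subset k(x)$ and $k(x) \subset K$ separately.

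First I would record the standard behaviour of the Gauss extension $\hat{v}$: its value group coincides with $v(k^{\times})$ and its residue field is $k(x)^{(\hat{v})} = k^{(v)}(\bar{x})$ with $\bar{x}$ transcendental over $k^{(v)}$ (cf. \cite[Ch. 6, \S 10]{Bour}); in particular $\hat{v}/v$ is unramified. Next, the hypotheses say exactly that $F(x , Y) \in \mathcal{O}_{\hat{v}}[Y]$ is monic with $\hat{v}$-unit discriminant, so its reduction is $F^{(v)}(\bar{x} , Y) \in k^{(v)}(\bar{x})[Y]$, a \emph{separable} polynomial of degree $m$ (the discriminant of the reduction is the reduction of $\delta(x)$ because $F$ is monic). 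The core step is then the structure theory of such ``\'etale'' extensions: because the discriminant is a unit, every extension $\tilde{v}$ of $\hat{v}$ to $K = k(x)(y)$ is unramified, with residue field $K^{(\tilde{v})}$ a finite separable extension of $k^{(v)}(\bar{x})$ corresponding to a monic irreducible factor of $F^{(v)}(\bar{x} , Y)$ (Hensel's lemma over the completion $k(x)_{\hat{v}}$). Combining the two stages gives $e = 1$, so Corollary \ref{C:RC1} applies and yields $r \leqslant d$. Finiteness of $d$ is then immediate: $K^{(\tilde{v})}$ is a one-variable function field over $k^{(v)}$, so the algebraic (a fortiori separable) closure of $k^{(v)}$ in it is finite over $k^{(v)}$; hence $d < \infty$ and $r < \infty$.

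For the ``moreover'' clause I would exploit absolute irreducibility of $F^{(v)}$ twice. By Gauss's lemma, since $F^{(v)}(X , Y)$ is irreducible and monic in $Y$, the specialization $F^{(v)}(\bar{x} , Y)$ is irreducible over $k^{(v)}(\bar{x})$; having a single (necessarily separable) irreducible factor, $F$ remains irreducible over the completion $k(x)_{\hat{v}}$, which forces $\tilde{v}$ to be the unique extension of $\hat{v}$ to $K$. On the other hand, absolute irreducibility means the reduction curve $F^{(v)} = 0$ is geometrically irreducible, so $k^{(v)}$ is separably algebraically closed in its function field $K^{(\tilde{v})}$; this is precisely the statement $d = 1$, and then Corollary \ref{C:RC1} gives $r = 1$.

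I expect the main obstacle to be the core valuation-theoretic step: turning the algebraic hypothesis ``$\delta$ is a $\hat{v}$-unit'' into the precise conclusion that all extensions $\tilde{v}/\hat{v}$ are unramified with separable residue extensions, carefully over the possibly imperfect residue field $k^{(v)}(\bar{x})$, and extracting the bijection between these extensions and the irreducible factors of $F^{(v)}(\bar{x} , Y)$ via Hensel's lemma in the completion. Once this ``good reduction in algebraic form'' is in place, both the finiteness of $r$ and the sharp conclusion $r = 1$ follow formally from Corollary \ref{C:RC1} together with the standard fact that the field of constants of a one-variable function field is a finite extension of the base.
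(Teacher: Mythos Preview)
Your proposal is correct and follows essentially the same approach as the paper: both reduce to Corollary \ref{C:RC1} via the tower $v \subset \hat{v} \subset \tilde{v}$, use the unit-discriminant hypothesis to force $\tilde{v}/\hat{v}$ unramified with residue field $k^{(v)}(\bar{x},\bar{y})$, and then read off $d < \infty$ (resp.\ $d=1$ and uniqueness of $\tilde{v}$) from the structure of this residue field (resp.\ absolute irreducibility of $F^{(v)}$). The only cosmetic difference is that the paper phrases the unramifiedness step via the integral closure $\tilde{\mathcal{O}} = \hat{\mathcal{O}}[y]$ (citing \cite{Jan}) rather than via Hensel's lemma over the completion, and obtains uniqueness from the residue-degree equality $[K^{(\tilde{v})}:k(x)^{(\hat{v})}] = [K:k(x)]$ rather than by explicitly counting irreducible factors.
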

\begin{proof}
Let $\hat{\mathcal{O}}$ be the valuation ring of $\hat{v}$, and let
$\tilde{\mathcal{O}}$ be the integral closure of $\hat{\mathcal{O}}$
in $K$. It follows from our assumptions that $y \in \tilde{\mathcal{O}}$.
Moreover, the discriminant of the basis $1, y, \ldots , y^{n - 1}$ of $K$
over $k(x)$ is $\delta$, hence a unit in $\hat{\mathcal{O}}$,
implying that

\vskip1mm

\noindent (a) $\tilde{\mathcal{O}} = \hat{\mathcal{O}}[y]$, and

\vskip1mm

\noindent (b) {\it any} extension of $\hat{v}$ to $K$ (in particular, $\tilde{v}$)
is unramified

\vskip2mm

\noindent (cf. \cite[Ch. I, Theorems 7.3 and 7.5]{Jan}). Since the residue field
$k(x)^{(\hat{v})}$ coincides with $k^{(v)}(\bar{x})$, it follows from (a) that the
residue field $K^{(\tilde{v})}$ is $k^{(v)}(\bar{x} ,
\bar{y})$, where $\bar{x} , \bar{y}$ denote the images of $x$ and $y$
(these obviously satisfy the relation $F^{(v)}(\bar{x} , \bar{y}) =
0$). As $\hat{v}(k(x)^{\times}) =
v(k^{\times})$, applying (b) we obtain that  the
ramification index $e = [\tilde{v}(K^{\times}) : v(k^{\times})]$ is $1$. Obviously, $[K^{(\tilde{v})} \cap
(k^{(v)})^{\mathrm{sep}} : k^{(v)}] < \infty$, so from
Corollary \ref{C:RC1}, we conclude that $r < \infty$. If $F^{(v)}$
is absolutely irreducible over $k^{(v)}$, then $$K^{(\tilde{v})} \cap
(k^{(v)})^{\mathrm{sep}} = k^{(v)}$$ (see \cite[\S 3.2, Corollary 2.14]{Liu}), and again  Corollary
\ref{C:RC1} implies that $r = 1$. In fact,  in this
case we have $[K^{(\tilde{v})} : k(x)^{(\hat{v})}] = [K : k(x)]$,
and therefore $\tilde{v}$ is the \emph{unique} extension of
$\hat{v}$.
\end{proof}

Next, we will need to recall the following well-known consequence of ``elimination theory" (cf. \cite{Shim}),
which in fact plays a crucial role in the proof of Proposition \ref{P:RC334}.

\begin{lemma}\label{L:RC3}
Let $k$ be a field equipped with a set $W$ of discrete valuations
that satisfies condition {\rm (A)}. If $F \in k[X , Y]$ is an absolutely irreducible
polynomial, then for almost all $v \in W$,  the reduction $F^{(v)} \in
k^{(v)}[x , y]$ is defined and is absolutely irreducible.
\end{lemma}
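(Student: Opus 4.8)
The plan is to reduce the statement to the elimination-theoretic fact that absolute irreducibility of a bivariate polynomial of a fixed total degree is governed by the non-vanishing of finitely many \emph{universal} forms with integer coefficients, and then to invoke condition (A) to control the finitely many places where reduction can fail. Let $d$ denote the total degree of $F$, and write $F = \sum a_{ij} X^i Y^j$ with $a_{ij} \in k$.

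The key input I would record first is the following form of Noether's irreducibility theorem (this is the ``elimination theory'' alluded to, cf. \cite{Shim}): for each $d$ there exist finitely many polynomials $\Phi_1, \ldots, \Phi_r$ with integer coefficients in the indeterminates corresponding to the coefficients of a general bivariate polynomial of degree $d$, such that for \emph{any} field $L$, a polynomial $G \in L[X,Y]$ of degree $d$ is absolutely irreducible if and only if $\Phi_s(\text{coeffs}(G)) \neq 0$ for at least one $s$. (This reflects the fact that the locus of coefficient tuples giving polynomials that are either reducible over $\bar{L}$ or of degree $< d$ is the union of the images of the multiplication maps $\mathcal{P}_e \times \mathcal{P}_{d-e} \to \mathcal{P}_d$ together with the lower-degree locus, all of which are closed and defined over the prime field, indeed over $\Z$.) Since $F$ is absolutely irreducible of degree $d$ over $k$, I may fix an index $s_0$ with $c := \Phi_{s_0}(\text{coeffs}(F)) \neq 0$ in $k$.

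The remaining step is bookkeeping with condition (A). For almost all $v \in W$ the following hold simultaneously: (i) every nonzero coefficient $a_{ij}$ is $v$-integral, since by (A) each of the finitely many nonzero $a_{ij}$ fails to be integral at only finitely many $v$, whence $F^{(v)}$ is defined; (ii) fixing some $a_{i_0 j_0} \neq 0$ with $i_0 + j_0 = d$, by (A) we have $v(a_{i_0 j_0}) = 0$ for almost all $v$, so this top-degree coefficient survives and $\deg F^{(v)} = d$; (iii) again by (A), $v(c) = 0$ for almost all $v$, and since $\Phi_{s_0}$ has integer coefficients its evaluation commutes with reduction modulo $v$, so $\Phi_{s_0}(\text{coeffs}(F^{(v)})) = c^{(v)} \neq 0$. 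Applying the criterion of the previous paragraph over $L = k^{(v)}$ to the degree-$d$ polynomial $F^{(v)}$, I conclude that $F^{(v)}$ is absolutely irreducible.

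The only nontrivial ingredient is the existence of the universal integer-coefficient forms $\Phi_s$; granting this (which is precisely the elimination-theoretic input), the rest is a routine finiteness argument using (A). The main point to get right is that the $\Phi_s$ can be taken with coefficients in the prime field, so that evaluating them commutes with passage to the residue field — this is exactly what makes step (iii) work uniformly across all $v$, and it is the hinge on which the whole argument turns.
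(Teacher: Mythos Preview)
Your argument is correct and is precisely the ``elimination theory'' argument the paper alludes to; the paper does not actually write out a proof of this lemma but simply records it as a well-known consequence and cites \cite{Shim}. Your use of the Noether irreducibility forms $\Phi_s$ over $\Z$ together with condition~(A) is the standard way to unpack that reference, so there is nothing to add.
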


Now Proposition \ref{P:RC777} and Lemma \ref{L:RC3} together yield

%\begin{prop}\label{T:RC1}
%Let $k$ be a perfect field equipped with a set $W$ of discrete
%valuations that satisfies condition {\rm (A)}, and let $C$ be a
%smooth, projective, geometrically irreducible curve over $k$. Write the
%field of rational functions $K = k(C)$ in the form $K = k(x , y)$,
%where $x$ and $y$ satisfy an absolutely irreducible polynomial $f(x
%, y) = y^m + f_{m-1}(x) y^{m-1}+ \cdots + f_0(x)$. For each $v \in W$,
%construct the extensions $\hat{v}$ and $\tilde{v}$ of $v$ to $k(x)$
%and $K$, respectively, as above (we pick one extension $\tilde{v}$
%for each $v \in W$), and set $V = \{ \tilde{v} \vert v \in W\}$.
%If $\hat{v}(f_i(x)) \geqslant 0$ for all $i = 0, \ldots , m-1,$ and
%$$\hat{v}(\delta(x)) = 0,$$ where $\delta(x)$ is
%the discriminant of $f$ considered as a polynomial in terms of $y$,
%then
%$$[\iota_k^{-1}({}_n\Br(K)_{V}) : {}_n\Br(k)_W] < \infty.$$
%\end{prop}
%
%\vskip1cm

\begin{prop}\label{P:RC1}
Let $K = k(x , y)$ where $x$ is transcendental over $k$ and $F(x , y) = 0$ with $F \in k[X , Y]$
an absolutely irreducible polynomial of the form
$$
F(X , Y) = Y^m + f_{m-1}(X)Y^{m-1} + \cdots + f_0(X), \ \ \ f_i(X) \in k[X].
$$
Let $\delta(X) \in k[X]$ be the discriminant of $F$ as a polynomial in $Y$, and assume that $\delta(X) \neq 0$. Furthermore, let $W$ be a set of discrete valuations of $k$ that
satisfies condition {\rm (A)}. For each $v \in W$, we consider the extension $\hat{v}$ to $k(x)$ given by {\rm (\ref{E:Val1})},
pick one extension\footnotemark \: $\tilde{v}$ of $\hat{v}$ to $K$, and set $\tilde{W} = \{ \tilde{v} \vert v \in W \}$. If
$$
\hat{v}(f_i(x)) \geqslant 0 \ \ \text{for all} \ \ i = 0, \ldots , m-1, \ \ \text{and} \ \ \hat{v}(\delta(x)) = 0,
$$
for all $v \in W$ then
$$
r_W := [{}_n\iota_k^{-1}({}_n\Br(K)_{\tilde{W}}) : {}_n\Br(k)_W] < \infty.
$$
In fact, $r_W = 1$ if the reduction $F^{(v)}(X , Y) \in k^{(v)}$ is absolutely irreducible for all $v \in W$.
\end{prop}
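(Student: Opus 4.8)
The plan is to globalize the single-valuation result Proposition \ref{P:RC777} by means of Lemma \ref{L:RC3}, and then to patch together the resulting local indices by a purely group-theoretic argument. First I would record the inclusion ${}_n\Br(k)_W \subseteq {}_n\iota_k^{-1}({}_n\Br(K)_{\tilde W})$, which is what makes the index $r_W$ meaningful. Indeed, for each $v \in W$ the hypotheses $\hat{v}(f_i(x)) \geqslant 0$ and $\hat{v}(\delta(x)) = 0$ place us exactly in the situation of Proposition \ref{P:RC777}, so the chosen extension $\tilde{v}$ is unramified over $v$ (i.e.\ $e = 1$); the commutativity of diagram (\ref{E:RC777}) in Lemma \ref{L:RC1} then forces $\rho_{\tilde{v}}({}_n\iota_k(a)) = 0$ whenever $\rho_v(a) = 0$, giving the inclusion.

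Next I would invoke Lemma \ref{L:RC3}. Since $F$ is absolutely irreducible and $W$ satisfies condition (A), the reduction $F^{(v)}$ is absolutely irreducible for all but finitely many $v \in W$; let $W_0 \subseteq W$ denote the finite exceptional set. For every $v \in W$ the hypotheses let us apply Proposition \ref{P:RC777}, which shows that the local index
$$
r_v := [{}_n\iota_k^{-1}({}_n\Br(K)_{\{ \tilde{v} \}}) : {}_n\Br(k)_{\{ v \}}]
$$
is finite, and moreover that $r_v = 1$ for $v \in W \setminus W_0$ (where $F^{(v)}$ is absolutely irreducible and $\tilde{v}$ is the unique extension of $\hat{v}$).

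Then I would assemble the two global groups out of their local constituents. Writing $A_v := {}_n\iota_k^{-1}({}_n\Br(K)_{\{ \tilde{v} \}})$ and $B_v := {}_n\Br(k)_{\{ v \}}$, and using that both forming kernels of residue maps and taking preimages under ${}_n\iota_k$ commute with intersection, one has
$$
{}_n\iota_k^{-1}({}_n\Br(K)_{\tilde{W}}) = \bigcap_{v \in W} A_v \quad \text{and} \quad {}_n\Br(k)_W = \bigcap_{v \in W} B_v .
$$
Since $A_v = B_v$ for $v \notin W_0$, the discrepancy between these intersections is concentrated on the finite set $W_0$: the map $a \mapsto (a + B_v)_{v \in W_0}$ carries $\bigcap_{v \in W} A_v$ into the finite group $\prod_{v \in W_0} A_v / B_v$, and a short computation (using $B_v \subseteq A_v$ on $W_0$ and $A_v = B_v$ off $W_0$) shows its kernel is precisely $\bigcap_{v \in W} B_v$. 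Hence $r_W \leqslant \prod_{v \in W_0} r_v < \infty$. If $F^{(v)}$ is absolutely irreducible for every $v \in W$, then $W_0 = \emptyset$, the product is empty, and $r_W = 1$.

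I expect the genuine mathematical content to reside already in Proposition \ref{P:RC777} (local finiteness) and Lemma \ref{L:RC3} (finiteness of the bad locus); the present statement is then a formal globalization. The only point requiring care is the bookkeeping that reduces the global index to a finite product of local indices, which hinges on $W_0$ being genuinely finite (guaranteed by condition (A) through Lemma \ref{L:RC3}) and on the validity of the intersection/preimage manipulations, after which the bound follows from the injection into $\prod_{v \in W_0} A_v / B_v$.
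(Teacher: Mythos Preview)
Your argument is correct and is essentially the same as the paper's: both reduce to the local indices via the injection into $\prod_{v} A_v/B_v$, invoke Proposition~\ref{P:RC777} for the finiteness of each factor, and use Lemma~\ref{L:RC3} to see that all but finitely many factors are trivial. The only cosmetic difference is that the paper writes the injection into the full product over $W$ and then observes that almost all factors vanish, whereas you restrict the target to $\prod_{v \in W_0}$ up front; these amount to the same thing.
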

\footnotetext{As follows from Proposition \ref{P:RC777} and Lemma \ref{L:RC3}, for almost all $v \in W$, the extension $\tilde{v}$ is unique.}

\begin{proof}
We have an injective map
\begin{equation}\label{E:RC778}
{}_n\iota_k^{-1}({}_n\Br(K)_{\tilde{W}}) / {}_n\Br(k)_W \to \prod_{v \in W} \iota_k^{-1}({}_n\Br(K)_{\{\tilde{v}\}}) / {}_n\Br(k)_{\{v\}}.
\end{equation}
By Lemma \ref{L:RC3}, the subset $W_0$ of $v \in W$ such that the reduction $F^{(v)}(X , Y)$ fails to be absolutely irreducible, is finite. Invoking Proposition \ref{P:RC777}, we see that the quotient ${}_n\iota_k^{-1}({}_n\Br(K)_{\{\tilde{v}\}}) / {}_n\Br(k)_{\{v\}}$ is finite for all $w \in W$ and is trivial
for $v \in W \setminus W_0$. Thus, the target in (\ref{E:RC778}) is always finite, and in fact is trivial if $F^{(v)}$ is absolutely irreducible for all $v \in W$, hence our assertions.
\end{proof}

\medskip

\addtocounter{thm}{1}

\noindent {\bf \ref{S:Constants}.11. Purely transcendental extensions.} Let $\ell = k(x_1, \ldots , x_r)$ be a purely transcendental extension of a field $k$,
and let $n > 1$ be an integer prime to $\mathrm{char} \: k$. For $i \in \{1, \ldots , r \}$, we set $\ell_i := k(x_1, \ldots , x_{i-1}, x_{i+1}, \ldots , x_r)$ and let $V_i$ denote the set of discrete valuations of $\ell = \ell_i(x_i)$ that correspond to the irreducible polynomials in $\ell_i[x_i]$ (and thus are trivial on $\ell_i$). We define
$$
V = \bigcup_{i = 1}^r V_i.
$$
(Geometrically, one interprets $\ell$ as the field of rational functions on the projective space $\PP_k^r$, and thinks of each $V_i$ as a subset of the set $V_0$ of
discrete valuations of $\ell$ associated with the prime divisors, which, of course, correspond to irreducible homogeneous polynomials in $k[X_0, X_1, \ldots , X_r]$. Then
the union above is taken inside $V_0$; note that $V_i$ and $V_j$ for $i \neq j$ are not disjoint. Obviously, $V$ depends on the choice of a transcendence basis $x_1, \ldots , x_r$.)      %Thinking of $\ell$ as the field of rational functions on
%$\PP_k^r$, we let $V_0$ denote the set of discrete valuations associated with the prime divisors (these, of course, correspond to irreducible homogeneous polynomials
%$F(X_0, X_1, \ldots , X_r) \in k[X_0, X_1, \ldots , X_r]$). Alternatively, $V_0$ can be described as $\bigcup_{i = 1}^r V_i$ where $V_i$ is the set of all discrete valuations of
%$\ell$ that are trivial on $\ell_i := k(x_1, \ldots , x_{i-1}, x_{i+1}, \ldots , x_r)$ (note that $\ell = \ell_i(x_i)$ and that $V_i$ and $V_j$ for $i \neq j$ are not disjoint).
Furthermore, let $T$ be a set of discrete valuations of $k$ such that $(n , \mathrm{char}\: k^{(v)}) = 1$ for all $v \in T$. For each $v$, we let $\check{v}$ denote its natural extension to $\ell$ given on nonzero polynomials by
\begin{equation}\label{E:Val777}
\check{v}\left(\sum_{i_1, \ldots , i_r}
a_{i_1, \ldots , i_r} x_1^{i_1} \cdots x_r^{i_r} \right) = \min_{a_{i_1, \ldots, i_r} \neq 0}
v(a_{i_1, \ldots, i_r}),
\end{equation}
and let $\check{T} = \{ \check{v} \: \vert \: v \in T \}$. Now, set
\begin{equation}\label{E:Val}
V(T) = V \bigcup \check{T}.
\end{equation}
Clearly, $V(T)$ satisfies condition (A) if $T$ does. Moreover, we have the following well-known statement (cf. the proof of \cite[Proposition 3.4]{RR}).
\begin{prop}\label{P:RC2-1}
${}_n\Br(\ell)_{V(T)} = {}_n\Br(k)_T$.
\end{prop}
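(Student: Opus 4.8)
The plan is to induct on the number $r$ of variables, the essential input being the Faddeev exact sequence for a rational function field in one variable, combined with Corollary \ref{C:RC1}. Throughout I identify $\Br(k)$ with its image in $\Br(\ell)$ under the (injective) base change map.

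The first step is a purely geometric reinterpretation of $V$. Since every nonconstant irreducible $P \in k[x_1, \ldots, x_r]$ has positive degree in at least one variable, the union $V = \bigcup_{i=1}^r V_i$ is exactly the set of all divisorial valuations $v_P$ of $\A^r_k$ (and the $V_i$ indeed overlap: a $P$ involving both $x_i$ and $x_j$ lies in $V_i \cap V_j$). Writing $\ell = \ell'(x_r)$ with $\ell' = k(x_1, \ldots, x_{r-1})$, this set splits as a disjoint union $V = W \sqcup \check{V'}$, where $W$ is the set of all valuations of $\ell$ trivial on $\ell'$ (the closed points of $\A^1_{\ell'}$, i.e. the $v_P$ with $\deg_{x_r} P \geqslant 1$) and $\check{V'}$ consists of the Gauss (canonical unramified) extensions to $\ell$ of the divisorial valuations $V'$ of $\A^{r-1}_k$ (the $v_P$ with $P \in k[x_1, \ldots, x_{r-1}]$). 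Because the Gauss extension $(\ref{E:Val777})$ is built up one variable at a time, the extensions $\check{T}$ of $T$ to $\ell$ are the $\ell$-Gauss extensions of the $\ell'$-Gauss extensions $T'$ of $T$; hence $V(T) = W \cup \check{U}$ with $U = V' \cup T'$, and this $U$ is precisely the set ``$V'(T)$'' attached to the field $\ell'$ in $r-1$ variables.

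The heart of the matter is a one-variable statement serving both as base case $r = 1$ and as inductive step: for any field $F$ with $n$ prime to $\mathrm{char}\, F$, with $W$ the set of all valuations of $F(t)$ trivial on $F$ and $U$ any set of discrete valuations of $F$ of residue characteristic prime to $n$, one has ${}_n\Br(F(t))_{W \cup \check{U}} = {}_n\Br(F)_U$ inside ${}_n\Br(F(t))$. To prove it I first invoke the Faddeev exact sequence $0 \to \Br(F) \to \Br(F(t)) \to \bigoplus_{P \in (\A^1_F)_0} H^1(\kappa(P), \Q/\Z) \to 0$ (cf.\ \cite[\S 6.4]{GiSz}); its exactness on $n$-torsion shows that a class unramified at every finite place already lies in $\Br(F)$, i.e. ${}_n\Br(F(t))_W = {}_n\Br(F)$. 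Next, for $\beta \in {}_n\Br(F)$ and $u \in U$, the Gauss extension $\check{u}$ has ramification index $e = 1$ and residue field $F^{(u)}(\bar{t})$, in which $F^{(u)}$ is algebraically closed; thus the invariant $d$ of Corollary \ref{C:RC1} equals $1$, and that corollary gives that $\beta$ maps into $\ker \rho_{\check{u}}$ if and only if $\beta \in \ker \rho_u$. Intersecting over $u \in U$ yields ${}_n\Br(F(t))_{W \cup \check{U}} = {}_n\Br(F)_U$.

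Finally I assemble the induction: applying the one-variable statement with $F = \ell'$, $t = x_r$ (so $W$ is the set of valuations of $\ell$ trivial on $\ell'$) and $U = V' \cup T'$ gives ${}_n\Br(\ell)_{V(T)} = {}_n\Br(\ell')_{V'(T)}$, and the inductive hypothesis for $r - 1$ variables identifies the right-hand side with ${}_n\Br(k)_T$; the base case $r = 1$ is the one-variable statement with $F = k$ and $U = T$. I expect the main obstacle to be the base case, specifically the point that being unramified at all finite places of $\A^1_F$ already forces a class into $\Br(F)$, even though the place at infinity is excluded from $W$: this is exactly what the Faddeev sequence over $\A^1$ provides (equivalently, the residue at infinity is determined by the finite residues through the reciprocity law), and it is what makes the whole argument close. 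The rest is bookkeeping, namely verifying the decomposition $V = W \sqcup \check{V'}$ and the compatibility of iterated Gauss extensions, so that $U$ is literally the datum ``$V'(T)$'' for the smaller field.
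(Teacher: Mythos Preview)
Your proof is correct and follows essentially the same approach as the paper's: both reduce to the one-variable facts that (a) ${}_n\Br(F(t))$ unramified at all closed points of $\A^1_F$ equals ${}_n\Br(F)$ (Faddeev), and (b) a class in ${}_n\Br(F)$ is unramified at $u$ iff its image in ${}_n\Br(F(t))$ is unramified at the Gauss extension $\check{u}$ (Corollary~\ref{C:RC1} with $e=d=1$), and then induct on $r$. The only difference is packaging: the paper first runs the induction with $V$ alone to get ${}_n\Br(\ell)_V = {}_n\Br(k)$ and afterwards applies (b) to bring in $\check{T}$, whereas you carry $T$ along through the induction via the single one-variable lemma ${}_n\Br(F(t))_{W\cup\check U}={}_n\Br(F)_U$; the content is the same.
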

\begin{proof}
This is an easy consequence of the following two standard facts. Let $F = k(x)$ be the field of rational functions over $k$, and $V^F$ be the set of discrete
valuations of $F$ corresponding to the monic irreducible polynomials in $k[x]$. Furthermore, for a discrete valuation $v$ of $k$, we let $\check{v}$ denote its natural extension to $F$ given by (\ref{E:Val1}).  Then

\medskip

\noindent (a) ({\it assuming that $(n , \mathrm{char} \: k) = 1$}) ${}_n\Br(F)_{V^F} = {}_n\Br(k)$, cf. \cite[\S19.5]{P}

\vskip1mm

\noindent (b) \parbox[t]{15cm}{({\it assuming that either $k^{(v)}$ is perfect or $(n , \mathrm{char} \: k^{(v)}) = 1$}) a central division $k$-algebra $\Delta$ of degree dividing $n$ is unramified at $v$ if and only if the algebra $\Delta \otimes_k F$ is unramified at $\check{v}$. Indeed, this follows from the diagram (\ref{E:RC777}) written for $\ell = F$ and $w = \check{v}$ since $\check{v} \vert v$ is unramified and $F^{(\check{v})} = k^{(v)}(x)$, hence $k^{(v)}$ is algebaically closed in $F^{(\check{v})}$.}

\vskip2mm

\noindent Let $V'$ be the set of discrete valuations of $\ell_r$ constructed in the same manner as $V$ was constructed for $\ell$; clearly, $V'$ is made up of
the restrictions of places from $\bigcup_{i=1}^{r-1} V_i$. It follows from (a) and (b) that ${}_n\Br(\ell)_{V} = {}_n\Br(\ell_r)_{V'}$, so by induction on $r$ we conclude that
$$
{}_n\Br(\ell)_{V} = {}_n\Br(k).
$$
Applying (b) again, we obtain
$$
{}_n\Br(\ell)_{V(T)} = {}_n\Br(k)_T,
$$
as required.
\end{proof}

\medskip

\addtocounter{thm}{1}

\noindent {\bf \ref{S:Constants}.13. Global fields.} We now recall the well-known description of the Brauer group of a global field. So, let $k$ be a global field, $V^k$ be the set of all places of $k$, and let $V^k_{\infty} \supset V^k_{\mathrm{real}}$ be the subsets of archimedean and real places, respectively (of course, $V^k_{\infty} = \emptyset$ if $k$ has positive characteristic). According to the theorem of Albert-Hasse-Brauer-Noether (cf. \cite[18.4]{P}, \cite[Corollary 6.5.4]{GiSz}), there is an exact sequence
%\begin{equation}\tag{ABHN}
$$
0 \to \Br(k) \longrightarrow \bigoplus_{v \in V^k} \Br(k_v) \stackrel{\mathrm{inv}}{\longrightarrow} \Q/\Z \to 0,
$$
%\end{equation}
where $\mathrm{inv}$ is the sum of the \emph{local invariant maps} $\mathrm{inv}_v \colon \Br(k_v) \to \Q/\Z$. Furthermore, $\mathrm{inv}_v$ is injective for all
$v \in V$, and in fact is an isomorphism for $v$ nonarchimedean, identifies $\Br(k_v)$ with $(1/2)\Z/\Z$ for $v \in V^k_{\mathrm{real}}$ and is trivial for $v \in
V^k_{\infty} \setminus V^k_{\mathrm{real}}$. Moreover, for $v$ nonarchimedean, $\mathrm{inv}_v$ actually coincides with the corresponding residue map, so the unramified
Brauer group $\Br(k_v)_{\{ v \}}$ is trivial. Now, let $S \subset V^k$ be a finite subset containing $V^k_{\infty}$, and set $T = V^k \setminus S$. Then the unramified Brauer
group ${}_n\Br(k)_T$ is identified with the kernel of
$$
\bigoplus_{v \in S} {}_n\Br(k_v) \stackrel{i}{\longrightarrow} \Q/\Z,
$$
where $i$ is the sum of $\mathrm{inv}_v$ for $v \in S$. The above description of $\Br(k_v)$ implies that ${}_n\Br(k_v)$ is $(1/n)\Z/\Z$ for $v$ nonarchimedean, $(n , 2)^{-1}\Z/\Z$
for $v \in V^k_{\mathrm{real}}$, and trivial for $v \in V^k_{\infty} \setminus V^k_{\mathrm{real}}$. On the other hand, $\mathrm{Im}\: i$ is $(1/n)\Z/\Z$
if $S \neq V^k_{\infty}$, is $(n , 2)^{-1}\Z/\Z$ if $S = V^k_{\infty}$ and $V^k_{\mathrm{real}} \neq \emptyset$, and is trivial otherwise. We then obtain the following.
\begin{lemma}\label{L:RC444}
Let $a = \vert V^k_{\mathrm{real}} \vert$ and $b = \vert S \setminus V^k_{\infty} \vert$. Then ${}_n\Br(k)_T$ is a finite group whose order $\beta(n, k, T)$ equals
$(n , 2)^a n^{b-1}$ if $b > 0$, $(n , 2)^{a - 1}$ if $b = 0$ but $a > 0$, and $1$ if $a = b = 0$. (Note that in all cases the order divides $(n , 2)^a n^b$.)
\end{lemma}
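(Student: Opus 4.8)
The plan is to take the description of ${}_n\Br(k)_T$ furnished in the discussion immediately preceding the lemma as the starting point: it is identified with the kernel of the homomorphism
$$
i \colon \bigoplus_{v \in S} {}_n\Br(k_v) \longrightarrow \Q/\Z
$$
given by summing the local invariant maps, and $\mathrm{Im}\, i$ has already been determined in the three relevant cases. Since all the groups in sight are finite, the order of ${}_n\Br(k)_T = \ker i$ is simply the order of the domain divided by the order of the image, so the entire argument reduces to careful bookkeeping of these two quantities in terms of $a = \vert V^k_{\mathrm{real}} \vert$ and $b = \vert S \setminus V^k_{\infty} \vert$.

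First I would compute the order of the domain. Using the stated local descriptions, namely that ${}_n\Br(k_v)$ is cyclic of order $n$ at each of the $b$ nonarchimedean places in $S$, has order $(n,2)$ at each of the $a$ real places, and is trivial at the complex places, one reads off
$$
\Big\vert \bigoplus_{v \in S} {}_n\Br(k_v) \Big\vert = n^b \cdot (n,2)^a .
$$
Next I would record the order of $\mathrm{Im}\, i$ in each case: it is $(1/n)\Z/\Z$, of order $n$, when $b > 0$; it is $(n,2)^{-1}\Z/\Z$, of order $(n,2)$, when $b = 0$ but $a > 0$; and it is trivial when $a = b = 0$. Dividing then yields $n^{b-1}(n,2)^a$, $(n,2)^{a-1}$, and $1$ respectively, which is exactly $\beta(n,k,T)$. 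Finally, since $\ker i$ is a subgroup of $\bigoplus_{v \in S} {}_n\Br(k_v)$, its order divides $(n,2)^a n^b$ in every case, giving the divisibility assertion at once.

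These computations are entirely routine; the only point demanding genuine care is the treatment of the real places and the accompanying factors of $(n,2) = \gcd(n,2)$. Concretely, one must confirm that $i$ really is surjective onto the claimed image in each case: when $b > 0$ a single nonarchimedean place in $S$ already surjects onto $(1/n)\Z/\Z$ through its invariant map, while when $S = V^k_{\infty}$ a single real place supplies all of $(n,2)^{-1}\Z/\Z$; and one must check that the real factors never contribute outside $(1/n)\Z/\Z$, which is automatic because $1/2$ lies in $(1/n)\Z/\Z$ precisely when $n$ is even, exactly the situation in which real places contribute nontrivially. Keeping the parity of $n$ straight across these subcases is the one place where an arithmetic slip could occur.
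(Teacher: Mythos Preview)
Your proposal is correct and follows exactly the approach the paper intends: the paper simply writes ``We then obtain the following'' after setting up the identification of ${}_n\Br(k)_T$ with $\ker i$ and determining both the local orders and $\mathrm{Im}\, i$, leaving the order computation $\vert \ker i \vert = \vert \bigoplus_{v\in S}{}_n\Br(k_v)\vert / \vert \mathrm{Im}\, i\vert$ as the obvious bookkeeping you carry out. Your added verification that $i$ surjects onto the stated image in each case is a welcome explicit justification of a point the paper merely asserts.
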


\medskip

\addtocounter{thm}{1}

\noindent {\bf \ref{S:Constants}.15. Purely transcendental extensions of global fields.} Combining Proposition \ref{P:RC2-1} with Lemma \ref{L:RC444},
we obtain the following finiteness result.

\begin{prop}\label{P:RC22-1}
Let $\ell = k(x_1, \ldots , x_r)$ be a purely transcendental extension of a global field $k$, let $n > 1$ be an integer prime to $\mathrm{char} \: k$, and
let $T = V^k \setminus S$, where $S \subset V^k$ is a finite subset containing $V^k_{\infty} \cup V(n)$. Then the group ${}_n\Br(\ell)_{V(T)}$ is finite, where $V(T)$ is
as in (\ref{E:Val}).
\end{prop}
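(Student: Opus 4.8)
The plan is to deduce the statement directly from the two results established immediately above, since the proposition is essentially their juxtaposition. First I would verify that the hypotheses of Proposition \ref{P:RC2-1} are satisfied for the set $T = V^k \setminus S$. That proposition is stated (in \S\ref{S:Constants}.11) for a set $T$ of discrete valuations of $k$ such that $(n , \mathrm{char}\: k^{(v)}) = 1$ for all $v \in T$, together with the standing assumption $(n , \mathrm{char}\: k) = 1$. The latter is among the hypotheses here, and the former holds because $S$ was chosen to contain $V(n)$: no $v \in T$ divides $n$, so $\mathrm{char}\: k^{(v)}$ is either $0$ or prime to $n$. With these conditions checked, Proposition \ref{P:RC2-1} gives the identification
$$
{}_n\Br(\ell)_{V(T)} = {}_n\Br(k)_T .
$$

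Next I would invoke Lemma \ref{L:RC444}, applied to the global field $k$ with the finite subset $S \supset V^k_{\infty}$ and $T = V^k \setminus S$. Because $S$ is finite, that lemma shows ${}_n\Br(k)_T$ is a finite group; indeed it computes the order explicitly as $\beta(n , k , T)$, which divides $(n , 2)^a n^b$ in the notation there. Combining this with the displayed identification yields the finiteness of ${}_n\Br(\ell)_{V(T)}$, which is the assertion. (One may also record that $V(T)$ satisfies condition (A) when $T$ does, as was already noted when $V(T)$ was defined, so the output fits the framework of Proposition \ref{P:Out1}.)

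The argument is purely formal, so I do not anticipate a genuine obstacle: the substantive content resides in the two ingredients, namely the reduction of the unramified Brauer group of the purely transcendental extension $\ell/k$ down to the base field (Proposition \ref{P:RC2-1}, resting on the Faddeev-type facts (a) and (b) in its proof and an induction on the number of variables), and the Albert--Hasse--Brauer--Noether computation packaged in Lemma \ref{L:RC444}. The only point requiring care is the bookkeeping of hypotheses—specifically ensuring $V(n) \subset S$ so that the residue-characteristic condition needed for Proposition \ref{P:RC2-1} is automatic along $T$—after which the finiteness is immediate.
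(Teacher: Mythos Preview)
Your proposal is correct and follows exactly the approach the paper takes: the proposition is stated as an immediate consequence of combining Proposition~\ref{P:RC2-1} with Lemma~\ref{L:RC444}, and your verification that $S \supset V(n)$ guarantees the residue-characteristic hypothesis along $T$ is the only bookkeeping needed.
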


\medskip

\noindent {\bf Remark \ref{S:Constants}.17.}
One can show that the group ${}_n\Br(\ell)_{V'}$ remains finite for any subset $V' \subset V(T)$ with finite complement $V(T) \setminus V'$. This result would enable us to somewhat streamline the proof of Theorem 2, but in fact its use can be avoided by a simple algebraic trick which we will describe
in \S\ref{S:FinGenF}.

%For each nonarchimedean $v \in V^k$, the residue map on ${}_n\Br(k_v)$ is injective, so the unramified Brauer group ${}_n\Br(k)_T$ coincides with the
%kernel of the natural map
%\begin{equation}
%$$
%{}_n\Br(k) \to \bigoplus_{v \in V^K \setminus S} \ {}_n\Br(k_v).
%$$
%\end{equation}
%On the other hand, by the Albert-Hasse-Brauer-Noether theorem, the natural map
%$$
%\Br(k) \to \bigoplus_{v \in V^k} \Br(k_v)
%$$
%is injective. Since ${}_n\Br(k_v)$ is $0$ if $v$ is complex or $v$ is real and $n$ is odd, $\Z/2\Z$ if $v$ is real and $n$ is even, and $\Z/n\Z$ in all other cases,
%we obtain that $\Br(k)_T$ is finite of order dividing $n^u$ if $n$ is odd, and $2^rn^u$ if $n$ is even, where $r$ is the number of real places, and $u = \vert S \setminus
%V^k_{\infty} \vert$ is the number of nonarchimedean places in $S$. Then our claim follows from the proposition.

\medskip

\addtocounter{thm}{2}

\noindent {\bf \ref{S:Constants}.18.  $\iota_k$ vs. ${}_n\iota_k$.} While the application of Proposition \ref{P:Out1} requires information about
$I := \iota^{-1}_k({}_n\Br(k(C))_V)$ for the natural map $\iota_k \colon \Br(k) \to \Br(k(C))$, the results of the current section provide information
about $J = {}_n\iota^{-1}_k({}_n\Br(k(C))_V)$ for the \emph{restriction} ${}_n\iota_k \colon {}_n\Br(k) \to {}_n\Br(k(C))$. So, in this section we will relate the finiteness
of $I$ to that $J$, and more precisely establish a connection between the orders of these groups.
Of course, if $C(k) \neq \emptyset$, then $\iota_k$ is injective, hence $I = J$. In the general
case, the relationship between these groups depends on the following \emph{relative Brauer group}
$$
\Br(k(C)/k) := \mathrm{Ker}\left(\Br(k) \to \Br(k(C))\right).
$$
First, we mention the following elementary group-theoretic lemma.
\begin{lemma}\label{L:RC555}
Let $A$ be an abelian group, $B \subset A$ be a finite subgroup, and $\phi \colon A \to A/B$ be the canonical homomorphism. Then for any integer $n \geqslant 1$,
the index $[{}_n(A/B) : \phi({}_nA)]$ is finite and divides $\vert B \vert$. Thus, if ${}_nA$ is finite then ${}_n(A/B)$ is finite of order dividing $\vert {}_nA \vert
\cdot \vert B \vert$.
\end{lemma}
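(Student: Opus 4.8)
The plan is to realize the quotient ${}_n(A/B)/\phi({}_nA)$ as a subgroup of the finite group $B/nB$ by constructing an explicit connecting homomorphism; everything else is then a matter of counting.

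First I would observe that $\phi$ carries ${}_nA$ into ${}_n(A/B)$, since $na = 0$ forces $n\phi(a) = \phi(na) = 0$, so $\phi({}_nA)$ is indeed a subgroup of ${}_n(A/B)$ and the index in question makes sense. Next I would define a map $\psi \colon {}_n(A/B) \to B/nB$ as follows: a class in ${}_n(A/B)$ is represented by some $a \in A$ with $na \in B$, and I set $\psi(a + B) = na + nB$. The two things to verify are that $\psi$ is well defined and that it is a homomorphism; for the former, changing the representative $a$ to $a + b$ with $b \in B$ changes $na$ by $nb \in nB$, so the value in $B/nB$ is unaffected, and additivity is immediate.

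The heart of the argument is the computation of $\ker \psi$, which I would show equals $\phi({}_nA)$: indeed $\psi(a+B) = 0$ means $na \in nB$, say $na = nb$ with $b \in B$, which is equivalent to $a - b \in {}_nA$, i.e. to $a + B = (a - b) + B \in \phi({}_nA)$. This yields an injection ${}_n(A/B)/\phi({}_nA) \hookrightarrow B/nB$. Since $B$ is finite, $B/nB$ is a finite quotient of $B$, so $|B/nB|$ divides $|B|$; hence $[{}_n(A/B):\phi({}_nA)]$ is finite and divides $|B|$, which is the first assertion. The second assertion is then pure bookkeeping: if ${}_nA$ is finite then $\phi({}_nA)$ has order dividing $|{}_nA|$, and $|{}_n(A/B)| = [{}_n(A/B):\phi({}_nA)]\cdot|\phi({}_nA)|$ divides $|B|\cdot|{}_nA|$.

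I do not anticipate any genuine obstacle here; the only points demanding a little care are the well-definedness of $\psi$ and the exact identification of its kernel. As an alternative, I could instead apply the snake lemma to multiplication by $n$ on the short exact sequence $0 \to B \to A \to A/B \to 0$, which produces the exact sequence ${}_nA \to {}_n(A/B) \to B/nB$ directly and makes the inclusion ${}_n(A/B)/\phi({}_nA) \hookrightarrow B/nB$ manifest, but the explicit map seems cleaner for so short a statement.
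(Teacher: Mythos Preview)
Your proof is correct and essentially identical to the paper's: the paper defines the same map $\psi \colon {}_n(A/B) \to B/nB$ (written multiplicatively as $aB \mapsto a^n B^n$) and asserts that the sequence ${}_nA \stackrel{\phi}{\to} {}_n(A/B) \stackrel{\psi}{\to} B/nB$ is exact, concluding immediately. You have simply spelled out the well-definedness and kernel computation that the paper leaves to the reader.
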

\begin{proof}
Define $\psi \colon {}_n(A/B) \to B/B^n$ by $aB \mapsto a^nB^n$. Then the sequence
$$
{}_nA \stackrel{\phi}{\longrightarrow} {}_n(A/B) \stackrel{\psi}{\longrightarrow} B/B^n
$$
is exact, and the lemma follows.
\end{proof}

In our situation, this lemma yields the following.
\begin{prop}\label{P:RC000}
Assume that the relative Brauer group $\Br(k(C)/k)$ is finite. If $J$ is finite then $I$ is also finite, with $\vert I \vert$ dividing
$\vert J  \vert \cdot \vert \Br(k(C)/k) \vert^2$. In particular, if the group $\Br(k(C)/k)$ is trivial (which is always the case when
$C(k) \neq \emptyset$), then $I = J$.
\end{prop}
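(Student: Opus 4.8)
The plan is to deduce everything formally from the group-theoretic Lemma~\ref{L:RC555}, the only real decision being which ambient group to feed into it. Write $B = \Br(k(C)/k) = \ker \iota_k$, a finite subgroup of $\Br(k)$ by hypothesis. First I would record three elementary facts about $I = \iota_k^{-1}({}_n\Br(k(C))_V)$, regarded as a subgroup of $\Br(k)$. Since $\iota_k(B) = 0$ lies in ${}_n\Br(k(C))_V$, we have $B \subseteq I$. Next, every $x \in I$ satisfies $\iota_k(x) \in {}_n\Br(k(C))_V \subseteq {}_n\Br(k(C))$, so $\iota_k(nx) = n\,\iota_k(x) = 0$ and hence $nx \in \ker \iota_k = B$; thus the quotient $I/B$ is annihilated by $n$, i.e. ${}_n(I/B) = I/B$. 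Finally, straight from the definitions, $J = {}_n\iota_k^{-1}({}_n\Br(k(C))_V)$ coincides with the $n$-torsion subgroup ${}_nI$, since for $x$ with $nx = 0$ one has ${}_n\iota_k(x) = \iota_k(x)$, so the conditions $nx = 0$ and $\iota_k(x) \in {}_n\Br(k(C))_V$ describe exactly the elements of both $J$ and ${}_nI$.

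With these in hand, I would apply Lemma~\ref{L:RC555} to the abelian group $A = I$ together with its finite subgroup $B$. Because ${}_nA = {}_nI = J$ is finite by assumption, the lemma guarantees that ${}_n(I/B)$ is finite of order dividing $\lvert J \rvert \cdot \lvert B \rvert$. By the second observation above, ${}_n(I/B)$ is all of $I/B$, so the index $[I : B] = \lvert I/B \rvert$ is finite and divides $\lvert J \rvert \cdot \lvert B \rvert$. Consequently $I$ is finite and
$$
\lvert I \rvert \;=\; \lvert B \rvert \cdot [I : B] \quad\text{divides}\quad \lvert B \rvert \cdot \lvert J \rvert \cdot \lvert B \rvert \;=\; \lvert J \rvert \cdot \lvert \Br(k(C)/k) \rvert^2,
$$
which is the asserted bound.

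For the final assertion, if $\Br(k(C)/k)$ is trivial then $\iota_k$ is injective; in particular every $x \in I$ has $nx \in B = 0$, so $I \subseteq {}_n\Br(k)$ and the restriction ${}_n\iota_k$ already sees all of $I$, whence $I = J$. That $C(k) \neq \emptyset$ forces $\Br(k(C)/k) = 0$ is precisely the injectivity of $\iota_k$ recorded in Remark~2.2(b) (a $k$-rational point furnishes a retraction $k(C) \to k$ splitting $\iota_k$).

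I do not anticipate any genuine obstacle, as the whole argument is a formal consequence of Lemma~\ref{L:RC555}. The one point that requires care---and the key to keeping the bound as clean as $\lvert J \rvert \cdot \lvert \Br(k(C)/k) \rvert^2$---is to invoke the lemma for the group $I$ rather than for $\Br(k)$ itself. This is exactly what makes ${}_nA$ equal to $J$ and the quotient $A/B = I/B$ purely $n$-torsion, so that the lemma's index estimate controls the \emph{full} index $[I:B]$ instead of merely an $n$-torsion portion of $\im \iota_k$.
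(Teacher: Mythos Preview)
Your argument is correct and is essentially the paper's own proof: both apply Lemma~\ref{L:RC555} with $A = I$ and $B = \Br(k(C)/k)$, using that ${}_nI = J$ and that $I/B \cong \iota_k(I) \subset {}_n\Br(k(C))$ is $n$-torsion, then multiply by $\lvert B\rvert$ to pass from $[I:B]$ to $\lvert I\rvert$. You have simply made explicit the arithmetic that the paper compresses into ``our assertion follows.''
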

\begin{proof}
By definition, $\Br(k(C)/k)$ coincides with $\mathrm{Ker}\:  \iota_k$. So, applying the lemma to $A = I$ and $B = \Br(k(C)/k)$, we obtain that
the index $[\iota_k(I) : \iota_k(J)]$ divides the order of $\Br(k(C)/k)$, and our assertion follows.
\end{proof}

It was shown by M.~Ciperiani and D.~Krashen \cite[Proposition 4.11]{CiKr} that $\Br(k(C)/k)$ is always finite if $k$ is finitely generated (and not
only for a curve $C$ but for any smooth projective variety). We note that the result in \emph{loc. cit.} is formulated for finitely generated extensions
of $\Q$, however the proof relies only on the finite generation of $\mathrm{Pic}^0(C)(k)$ which holds in any characteristic. Since in the context of  Theorem 2
the field $k$ will be finitely generated, proving the finiteness of $I$ is equivalent to proving the finiteness of $J$. For convenience of reference, we will formulate
the following immediate consequence of Proposition \ref{P:RC333} and Lemma \ref{L:RC444}.
\begin{cor}\label{C:RC999}
Let $C$ be a smooth projective geometrically irreducible curve over a global field $k$ such that $C(k) \neq \emptyset$, and let $T \subset V^k$ be a subset containing $V_{\infty}^k$ with finite complement $V^k \setminus T$. Assume that $C$ has good reduction at every $v \in T$, let $\dot{v}$ denote the canonical extension of $v$ to $k(C)$, and set
$\dot{T} =
\{ \dot{v} \: \vert \: v \in T \}$. Then for the natural map $\iota_k \colon \Br(k) \to \Br(k(C))$ we have
$$
\vert \iota^{-1}_k({}_n\Br(k(C))_{\dot{T}}) \vert = \beta(n, k, T)
$$
(see Lemma \ref{L:RC444} for the definition of $\beta(n, k, T)$).
\end{cor}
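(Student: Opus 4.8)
The plan is to assemble the statement from the three ingredients already established, reading the set $T$ itself as the collection of valuations to which Proposition~\ref{P:RC333} is applied and then correcting for the fact that the map in the conclusion is the full map $\iota_k$ rather than its restriction ${}_n\iota_k$. Concretely, I would prove $\vert I \vert = \vert J \vert = \vert {}_n\Br(k)_T \vert = \beta(n,k,T)$ in the notation of \S\ref{S:Constants}.18, where the first equality comes from the rational point, the second from good reduction, and the last from Lemma~\ref{L:RC444}.

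First I would apply Proposition~\ref{P:RC333} with its set of valuations taken to be the (discrete, i.e. nonarchimedean) places of $T$: this is legitimate precisely because $C$ is assumed to have good reduction at every such $v \in T$, and the canonical extensions $\dot v$ are exactly the elements of $\dot T$. Proposition~\ref{P:RC333} then yields, for the restriction ${}_n\iota_k \colon {}_n\Br(k) \to {}_n\Br(k(C))$,
$$
{}_n\iota_k^{-1}({}_n\Br(k(C))_{\dot T}) = {}_n\Br(k)_T,
$$
that is, $J = {}_n\Br(k)_T$. (The archimedean places carry no residue map and so impose no condition on the unramified group, but they are accounted for in the exponent $a$ of Lemma~\ref{L:RC444}, so no information is lost.)

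Next I would pass from the restriction ${}_n\iota_k$ to the full base change map $\iota_k \colon \Br(k) \to \Br(k(C))$ that appears in the statement, i.e. identify $I := \iota_k^{-1}({}_n\Br(k(C))_{\dot T})$ with $J$. This is the only point that needs a moment's care, and it is exactly where the hypothesis $C(k) \neq \emptyset$ enters: a rational point gives a $k$-defined splitting of (\ref{E:Out2a}) (see Remark~2.2(b)), so $\iota_k$ is injective and $\Br(k(C)/k)$ is trivial. By Proposition~\ref{P:RC000} this forces $I = J$. One can also see it directly: if $x \in \Br(k)$ satisfies $\iota_k(x) \in {}_n\Br(k(C))$, then $\iota_k(nx) = 0$, whence $nx = 0$ by injectivity, so $x \in {}_n\Br(k)$ already; thus $I \subseteq J$, and the reverse inclusion is immediate.

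Finally, since $T = V^k \setminus S$ with $S$ finite and $T \supset V^k_\infty$, Lemma~\ref{L:RC444} applies and gives $\vert {}_n\Br(k)_T \vert = \beta(n,k,T)$. Stringing the three steps together yields $\vert I \vert = \vert J \vert = \beta(n,k,T)$, as claimed. I do not expect any genuine obstacle here: the entire content sits in Propositions~\ref{P:RC333} and~\ref{P:RC000} and Lemma~\ref{L:RC444}, and the reduction $I = J$ — the sole non-formal step — is precisely the use of the rational point $C(k) \neq \emptyset$.
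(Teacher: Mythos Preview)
Your proposal is correct and matches the paper's own treatment: the paper does not give a separate proof but simply declares the corollary ``an immediate consequence of Proposition~\ref{P:RC333} and Lemma~\ref{L:RC444}'', and the passage from $I$ to $J$ via $C(k)\neq\emptyset$ is exactly the observation made in \S\ref{S:Constants}.18 (``if $C(k)\neq\emptyset$, then $\iota_k$ is injective, hence $I=J$''). Your three-step assembly --- Proposition~\ref{P:RC333} for $J = {}_n\Br(k)_T$, the rational point for $I=J$, and Lemma~\ref{L:RC444} for the order --- is precisely the intended argument.
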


\medskip

\section{A result on finitely generated fields}\label{S:FinGenF}

In this section, we will describe a specific presentation of a given finitely generated field as a function field of a geometrically
irreducible smooth projective curve $C$. We will then use  properties of this presentation to verify item (I) of Proposition \ref{P:Out1}
for an appropriate set of valuations.
\begin{prop}\label{P:FGF1}
Let $K$ be a finitely generated field which is not a global field. There exists a global field $P \subset K$ and elements $t_1, \ldots , t_r, x, y \in K$ such that

\medskip

{\rm (1)} $K = P(t_1, \ldots , t_r, x, y)$;

\vskip1mm

{\rm (2)} $t_1, \ldots , t_r$ and $x$ are algebraically independent over $P$;

\vskip1mm

{\rm (3)} $k := P(t_1, \ldots , t_r)$ is algebraically closed in $K$;

\vskip1mm

{\rm (4)} \parbox[t]{15cm}{there is a polynomial $f(T_1, \ldots , T_r, X , Y) \in P[T_1, \ldots , T_r, X, Y]$ of the form
$$
f = Y^d + p_{d-1}(T_1, \ldots , T_r, X)Y^{d-1} + \cdots + p_0(T_1, \ldots , T_r, X)
$$
that satisfies the following properties: \vskip2mm {\rm (a)} $f(t_1, \ldots , t_r, x, y) = 0$; \vskip1mm  {\rm (b)} $f(t_1, \ldots , t_r, x, Y) \in k(x)[Y]$ is
irreducible; \vskip1mm  {\rm (c)} the discriminant $\delta_Y(f)$ of $f$ as a polynomial in $Y$ has the form
$$
\delta_Y(f) = q_m(T_1, \ldots , T_r)X^m + q_{m-1}(T_1, \ldots , T_r)X^{m-1} + \cdots + q_0(T_1, \ldots , T_r),
$$
with $q_i \in P[T_1, \ldots , T_r]$ and $q_m \in P^{\times}$.}
\end{prop}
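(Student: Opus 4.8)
The plan is to present $K$ as the function field of a geometrically integral plane curve over a rational function field $k=P(t_1,\dots,t_r)$ with global constant field $P$, and then to normalize the defining equation. First I would fix $P$ and reduce to the case that $K/P$ is \emph{regular} (that is, $P$ is algebraically closed in $K$ and $K/P$ is separable). If $\mathrm{char}\,K=0$, take $P=\overline{\Q}\cap K$; since $K$ is finitely generated this is a number field, it is by construction algebraically closed in $K$, and $K/P$ is separable automatically. If $\mathrm{char}\,K=p>0$, use that $K/\F_p$ is separably generated (as $\F_p$ is perfect): choosing a separating transcendence basis $w_1,\dots,w_d$ of $K/\F_p$ and letting $P$ be the relative algebraic closure of $\F_p(w_1)$ in $K$, one obtains a global field (finitely generated of transcendence degree one over a finite field) that is algebraically closed in $K$; moreover $w_2,\dots,w_d$ is again a separating transcendence basis, this time of $K/P$, so $K/P$ is separable. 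In either case $K/P$ is regular, and I set $r+1:=\trdeg_P K\ge 1$.

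Next I would build a monic integral model. Being finitely generated and separable, $K/P$ admits a separating transcendence basis $u_1,\dots,u_{r+1}$, and the primitive element theorem gives $K=P(u_1,\dots,u_{r+1})(y)$; after multiplying $y$ by a suitable element of $P[u_1,\dots,u_{r+1}]$ I may assume $y$ is integral over $P[u_1,\dots,u_{r+1}]$, with monic minimal polynomial $f_0\in P[u_1,\dots,u_{r+1}][Y]$ whose $Y$-discriminant $D_0:=\delta_Y(f_0)\in P[u_1,\dots,u_{r+1}]$ is nonzero by separability. The crucial bookkeeping point is that any invertible $P$-linear change of the coordinates $u_1,\dots,u_{r+1}$ leaves the field $P(u_1,\dots,u_{r+1})$ unchanged, and hence preserves both the degree $[K:P(u_1,\dots,u_{r+1})]$ and the irreducibility of $f_0$ over it; such a change only alters which coordinates get absorbed into $k$. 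Thus the new coordinates remain a transcendence basis, so that conditions (1), (2), (4)(a) and (4)(b) hold for every such change, and only (3) and (4)(c) remain to be arranged.

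Finally I would choose the change of coordinates $(u_1,\dots,u_{r+1})\mapsto(t_1,\dots,t_r,x)$ generically, setting $k=P(t_1,\dots,t_r)$, so that $k(x)=P(u_1,\dots,u_{r+1})$ and (since $K/P(u_1,\dots,u_{r+1})$ is separable) $K/k$ is separable. For (4)(c): in the new coordinates the coefficient of the top power of $X$ in $D_0$ equals the value of the leading form of $D_0$ at the direction vector corresponding to $x$, which is a nonzero polynomial in the entries of the change of coordinates; hence for all choices outside a proper closed subset this coefficient is a nonzero element of $P$, giving $\delta_Y(f)=q_m(T_1,\dots,T_r)X^m+\cdots+q_0$ with $q_m\in P^\times$. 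For (3): since $K/P$ is regular the affine variety $X=\Spec P[u_1,\dots,u_{r+1},Y]/(f_0)$ is geometrically integral, and the requirement that $k=P(t_1,\dots,t_r)$ be algebraically closed in $K$ is exactly the geometric irreducibility of the generic fiber of the projection $X\to\A^r$ determined by $t_1,\dots,t_r$; a Bertini-type irreducibility theorem guarantees this for a generic projection. The main obstacle is to meet (3) and (4)(c) \emph{simultaneously}: each condition excludes only a proper closed subset of the affine space parametrizing the admissible changes of coordinates, and since $P$ is an infinite field these two subsets cannot exhaust its rational points, so a single change satisfying both exists. Taking $f$ to be $f_0$ rewritten in $t_1,\dots,t_r,x$ then yields all of (1)--(4), and the geometric irreducibility furnished by (3) ensures that the curve $C$ with $k(C)=K$ is geometrically irreducible over $k$.
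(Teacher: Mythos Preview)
Your approach is correct and closely parallels the paper's, but the key technical step is executed differently. Both proofs begin identically: take $P$ to be the relative algebraic closure in $K$ of $\Q$ (in characteristic zero) or of $\F_p(s_1)$ for one member $s_1$ of a separating transcendence basis (in characteristic $p$), so that $K/P$ is regular, then pick a separating transcendence basis and a primitive element $y$ with monic minimal polynomial over the polynomial ring. The difference is in how conditions (3) and (4)(c) are arranged. You use a generic \emph{linear} change of coordinates on the transcendence basis, obtaining (4)(c) from the observation that the leading $X$-coefficient of the discriminant is the leading form evaluated at the $x$-direction, and (3) from a Bertini-type statement about generic projections. The paper instead makes a \emph{nonlinear} substitution $t_i = z_i - c_i x^{m_i}$ with exponents $m_i=(\ell+1)^i$ chosen in Noether-normalization style so that the monomials of the discriminant acquire pairwise distinct $X$-degrees; this forces (4)(c) to hold for \emph{every} choice of nonzero $c_i$, after which the Zariski--Matsusaka theorem (the paper's Proposition~\ref{P:ZM} and Corollary~\ref{C:FGF1}) is invoked to choose the $c_i$'s so that (3) holds.

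Your route is more conceptual and avoids the explicit exponent computation; the paper's route is more constructive and, crucially, decouples the two conditions so that one never has to check that the locus where (3) holds is open in the parameter space of coordinate changes. That is the one soft spot in your write-up: you assert that ``each condition excludes only a proper closed subset,'' which is immediate for (4)(c) but needs an argument for (3). The underlying fact is true---it is essentially the content of Zariski--Matsusaka applied at the generic point of the parameter space, or the constructibility of the locus of geometrically integral fibers---but you should either cite it precisely or, as the paper does, arrange matters so that the issue does not arise.
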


\medskip

\addtocounter{thm}{1}

\noindent {\bf \ref{S:FinGenF}.2. Verification of item (I) of Proposition \ref{P:Out1}.} We may assume that the given  finitely generated field $K$ is not global as
otherwise Theorem 2 follows from Lemma \ref{L:RC444}. We fix a presentation of $K$ of the form $K = P(t_1, \ldots , t_r, x, y)$ satisfying the properties
listed in Proposition \ref{P:FGF1}. Set $k = P(t_1, \ldots , t_r)$ and let $C$ be a connected $k$-defined smooth projective curve such that $k(C) = k(x , y) = K$.
Note that due to condition (3) in the proposition, $C$ is geometrically irreducible (see \cite[Ch.3, Corollary 2.14]{Liu}).
As in 3.11, we write $k_i = P(t_1, \ldots , t_{i-1}, t_{i+1}, \ldots , t_r)$,
let $V_i$ be the set of discrete valuations of $k$ corresponding to the irreducible monic polynomials in $k_i[t_i]$, and set
$$
V = \bigcup_{i = 1}^r V_i.
$$
Next, for each discrete valuation $v$ of $P$, we let $\check{v}$ be its natural extension to $k$ given by a formula similar to (\ref{E:Val777}), and further denote by $\hat{v}$
the extension of $\check{v}$ to $k(x)$ given by (\ref{E:Val1}). Now, pick
a set $T \subset V^P$ of discrete valuations with finite complement $V^P \setminus T$ such that for each $v \in T$ we have $(n , \mathrm{char}\: P^{(v)}) = 1$ and
\begin{equation}\label{E:FinGenF1}
\hat{v}(p_i(t_1, \ldots , t_r, x)) \geqslant 0 \ \ \text{for all} \ \ i = 1, \ldots , d-1, \ \ \text{and} \ \ v(q_m) = 0.
\end{equation}
Since the discriminant is given by an integral polynomial in the coefficients of the original polynomial, we have $\hat{v}(\delta_Y(f)) \geqslant 0$, so the condition
$v(q_m) = 0$ forces
\begin{equation}\label{E:FinGenF100}
\hat{v}(\delta_Y(f)) = 0.
\end{equation}
Set $\check{T} = \{ \check{v} \vert v \in T \}$ and $V(T) = V \bigcup \check{T}$. We will still use the notation $\hat{v}$ to denote the extension of every $v \in V(T)$ to $k(x)$
given by (\ref{E:Val1}). Let $\tilde{v}$ denote an arbitrary extension of $\hat{v}$ to $k(x , y) = K$ and set $\widetilde{V(T)} = \{ \tilde{v} \vert v \in V(T) \}$. Then (\ref{E:FinGenF1}), (\ref{E:FinGenF100}) and the fact that $q_m \in P^{\times}$ imply that
$$
\hat{v}(p_i(t_1, \ldots , t_r, x)) \geqslant 0 \ \ \text{for all} \ \ i = 1, \ldots , d-1, \ \ \text{and}  \ \ \hat{v}(\delta_Y(f)) = 0.
$$
for all $v \in V(T)$; thus, the assumptions of Proposition \ref{P:RC1} hold for $W = V(T)$. We then obtain the following.
\begin{thm}\label{T:FinGenF1}
In the above notations, for the canonical map $\iota_k \colon \Br(k) \to \Br(K)$, the group $\iota_k^{-1}({}_n\Br(K)_{\widetilde{V(T)}})$ is finite.
\end{thm}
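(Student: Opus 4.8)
The plan is to assemble the statement from the three tools already in place, the only genuine subtlety being the passage from the restriction map ${}_n\iota_k$ to the full base-change map $\iota_k$. First I would check that the hypotheses of Proposition \ref{P:RC1} hold for the set $W = V(T)$ of discrete valuations of $k$. By construction in \S\ref{S:FinGenF}.2 the polynomial $F(X,Y) := f(t_1, \ldots, t_r, X, Y) \in k[X,Y]$ is monic in $Y$ and absolutely irreducible (this is precisely the geometric irreducibility of $C$ coming from condition (3) of Proposition \ref{P:FGF1}), and its discriminant $\delta_Y(f)$ in $Y$ is nonzero. Moreover (\ref{E:FinGenF1}), (\ref{E:FinGenF100}) and $q_m \in P^{\times}$ give $\hat{v}(p_i(t_1, \ldots, t_r, x)) \geqslant 0$ and $\hat{v}(\delta_Y(f)) = 0$ for every $v \in V(T)$. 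Thus Proposition \ref{P:RC1} applies, and I conclude that the index
$$
r_{V(T)} = [{}_n\iota_k^{-1}({}_n\Br(K)_{\widetilde{V(T)}}) : {}_n\Br(k)_{V(T)}]
$$
is finite.

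Next I would observe that $V(T)$ is exactly the set built in \S\ref{S:Constants}.11 for the purely transcendental extension $k = P(t_1, \ldots, t_r)$ of the global field $P$: the pieces $V = \bigcup_{i=1}^r V_i$ and $\check{T}$ are formed in the same manner. Since $S := V^P \setminus T$ is finite and, by the choice of $T$ (it consists of discrete valuations with $(n,\mathrm{char}\, P^{(v)}) = 1$), contains $V^P_{\infty} \cup V(n)$, Proposition \ref{P:RC22-1} applies and shows that ${}_n\Br(k)_{V(T)}$ is finite. Combining this with the finiteness of the index $r_{V(T)}$ from the previous step, the group
$$
J := {}_n\iota_k^{-1}({}_n\Br(K)_{\widetilde{V(T)}})
$$
is finite.

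Finally — the step needing the most care — I would pass from $J$, defined via the restriction ${}_n\iota_k$ of the base-change map to the $n$-torsion, to the group $I := \iota_k^{-1}({}_n\Br(K)_{\widetilde{V(T)}})$ attached to the full map $\iota_k \colon \Br(k) \to \Br(K)$ that the theorem actually concerns. Here the discrepancy is governed by the relative Brauer group $\Br(k(C)/k) = \ker \iota_k$: an element $a \in \Br(k)$ lies in $I$ precisely when $na \in \Br(k(C)/k)$, so $I$ can be strictly larger than $J$. Since $k = P(t_1, \ldots, t_r)$ is finitely generated, the theorem of Ciperiani--Krashen \cite[Proposition 4.11]{CiKr} guarantees that $\Br(k(C)/k)$ is finite. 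I would therefore invoke Proposition \ref{P:RC000} with the set $\widetilde{V(T)}$ of valuations of $K = k(C)$: the finiteness of $J$ together with that of $\Br(k(C)/k)$ yields the finiteness of $I$, with $\vert I \vert$ dividing $\vert J \vert \cdot \vert \Br(k(C)/k) \vert^2$. This is exactly the assertion of the theorem, and I expect the bookkeeping of the first two paragraphs to be routine, with the genuine content concentrated in this last transition from ${}_n\iota_k$ to $\iota_k$.
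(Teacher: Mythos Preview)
Your proof is correct and follows essentially the same three-step structure as the paper's own argument: reduce via Proposition~\ref{P:RC000} (plus Ciperiani--Krashen) from $\iota_k$ to ${}_n\iota_k$, then apply Proposition~\ref{P:RC1} to control the index over ${}_n\Br(k)_{V(T)}$, and finish with Proposition~\ref{P:RC22-1}. One small imprecision: your characterization ``$a \in I$ precisely when $na \in \Br(k(C)/k)$'' omits the unramifiedness condition at $\widetilde{V(T)}$, but this is only an explanatory aside and does not affect the actual argument, which correctly invokes Proposition~\ref{P:RC000}.
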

\begin{proof}
By Proposition \ref{P:RC000} and subsequent remarks, it is enough to prove that the group $${}_n\iota_k^{-1}({}_n\Br(K)_{\widetilde{V(T)}})$$ is finite. Furthermore,
by Proposition \ref{P:RC1}, the index $[{}_n\iota^{-1}_k({}_n\Br(K)_{\widetilde{V(T)}}) : {}_n\Br(k)_{V(T)}]$ is finite. Finally, by Proposition \ref{P:RC22-1}, the group
${}_n\Br(k)_{V(T)}$ is finite, completing the proof.
\end{proof}

\medskip

\addtocounter{thm}{1}

\noindent {\bf \ref{S:FinGenF}.4. Proof of Proposition \ref{P:FGF1}.} We will derive this proposition from the following more general result. Following \cite[Ch. VIII]{La}, we will call a field
extension $K/P$ \emph{regular} if it is separable and $P$ is algebraically closed in $K$; equivalently, $\overline{P}$ and $K$ are linearly disjoint over $P$, for a choice
of algebraic closure $\overline{P}$ of $P$ (see \cite[Ch. VIII, \S4]{La}, particularly pp. 366-367).
\begin{prop}\label{P:FGF2}
Let $K$ be a finitely generated regular field extension of an infinite field $P$ of transcendence degree $> 0$. Then there exist elements $t_1, \ldots , t_r, x, y \in K$
such that

\medskip

{\rm (1)} $K = P(t_1, \ldots , t_r, x, y)$;

\vskip1mm

{\rm (2)} $t_1, \ldots , t_r$ and $x$ are algebraically independent over $P$;

\vskip1mm

{\rm (3)} $k := P(t_1, \ldots , t_r)$ is algebraically closed in $K$;

\vskip1mm

{\rm (4)} \parbox[t]{15cm}{there is a polynomial $f(T_1, \ldots , T_r, X , Y) \in P[T_1, \ldots , T_r, X, Y]$ of the form
$$
f = Y^d + p_{d-1}(T_1, \ldots , T_r, X)Y^{d-1} + \cdots + p_0(T_1, \ldots , T_r, X)
$$
that satisfies the following properties: \vskip2mm {\rm (a)} $f(t_1, \ldots , t_r, x, y) = 0$; \vskip1mm {\rm (b)} $f(t_1, \ldots , t_r, x, Y) \in k(x)[Y]$ is
irreducible; \vskip1mm {\rm (c)} the discriminant $\delta_Y(f)$ of $f$ as a polynomial in $Y$ has the form
$$
\delta_Y(f) = q_m(T_1, \ldots , T_r)X^m + q_{m-1}(T_1, \ldots , T_r)X^{m-1} + \cdots + q_0(T_1, \ldots , T_r),
$$
with $q_i \in P[T_1, \ldots , T_r]$ and $q_m \in P^{\times}$.}
\end{prop}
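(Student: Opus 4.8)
The plan is to build the required presentation from a separating transcendence basis and then place it in general position so that the geometric conditions (3) and (c) hold. Since $K/P$ is regular, it is in particular separable, and being finitely generated it is therefore \emph{separably generated} (cf. \cite[Ch. VIII]{La}); thus $K$ admits a separating transcendence basis $t_1^0, \ldots, t_r^0, x^0$, so that $K/P(t_1^0, \ldots, t_r^0, x^0)$ is finite separable (here $r+1 = \trdeg_P K \geqslant 1$). By the primitive element theorem $K = P(t_1^0, \ldots, t_r^0, x^0, y_0)$ for some $y_0$, whose monic minimal polynomial over $P(t_1^0, \ldots, t_r^0)(x^0)$ has coefficients in this field. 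Replacing $y_0$ by $y = a y_0$ for a suitable common denominator $a \in P[t_1^0, \ldots, t_r^0, x^0]$ clears denominators, producing a monic polynomial $f_0 \in P[T_1, \ldots, T_r, X, Y]$ with $f_0(t^0, x^0, y) = 0$ and $f_0(t^0, x^0, Y)$ irreducible. This already yields (1), (2), (a), (b) and the stated shape of $f$; moreover, separability guarantees that the discriminant $\delta_0 := \delta_Y(f_0) \in P[T_1, \ldots, T_r, X]$ is nonzero.

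Next I would arrange condition (c) by a shear. Let $D = \deg \delta_0$ and let $\delta_0^{\mathrm{top}}$ be its top-degree homogeneous component. Because $P$ is infinite and $\delta_0^{\mathrm{top}} \neq 0$, I can choose $\lambda = (\lambda_1, \ldots, \lambda_r) \in P^r$ with $\delta_0^{\mathrm{top}}(\lambda_1, \ldots, \lambda_r, 1) \neq 0$. Setting $t_i = t_i^0 - \lambda_i x^0$ and $x = x^0$, and putting $f(T, X, Y) = f_0(T_1 + \lambda_1 X, \ldots, T_r + \lambda_r X, X, Y)$, one checks that the coefficient of $X^D$ in the discriminant $\delta_Y(f) = \delta_0(T_1 + \lambda_1 X, \ldots, T_r + \lambda_r X, X)$ is exactly the nonzero constant $\delta_0^{\mathrm{top}}(\lambda, 1) \in P^{\times}$: a monomial of total degree $e$ in $\delta_0$ contributes a term of $X$-degree $e$ with coefficient the corresponding evaluation of $\lambda$ at $X = 1$, so only the top form survives in degree $D$. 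This gives (c) with $m = D$ and $q_m = \delta_0^{\mathrm{top}}(\lambda, 1)$. Since $P(t_1, \ldots, t_r, x) = P(t_1^0, \ldots, t_r^0, x^0)$, conditions (1), (2), (a), (b) persist verbatim.

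It remains to secure (3): that $k = P(t_1, \ldots, t_r) = P(t_1^0 - \lambda_1 x^0, \ldots, t_r^0 - \lambda_r x^0)$ is algebraically closed in $K$. By regularity, $K = P(V)$ for a geometrically integral $P$-variety $V$ of dimension $r+1$, and by \cite[Ch. 3, Corollary 2.14]{Liu} condition (3) is equivalent to the generic fibre of the dominant rational map from $V$ to $\A^r_P$ defined by $(t_1, \ldots, t_r)$ being geometrically integral. This is precisely a Bertini-type irreducibility statement for the $r$ general functions $t_i^0 - \lambda_i x^0$: over the infinite field $P$ the locus of $\lambda$ for which this generic fibre is geometrically integral contains a nonempty Zariski-open subset of $\A^r_P$. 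Intersecting it with the nonempty open set on which $\delta_0^{\mathrm{top}}(\lambda, 1) \neq 0$ used above --- nonempty again because $P$ is infinite --- furnishes a single $\lambda \in P^r$ meeting all requirements at once. (When $r = 0$ the shear is vacuous, (c) holds automatically, and (3) reduces to the regularity hypothesis that $P$ is algebraically closed in $K$.)

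The main obstacle is this last step, condition (3): one must know that the generic fibre of a general projection of a geometrically integral variety remains geometrically integral, and that the good locus is large enough (contains rational points of $\A^r_P$) to be compatible with the constraint imposed on $\lambda$ by the discriminant. Both features rely essentially on $P$ being infinite, which is why this hypothesis appears; the coordinate shear needed for (c) unavoidably alters the subfield $k$, so conditions (3) and (c) cannot be treated independently and must be met simultaneously by a single generic choice of $\lambda$.
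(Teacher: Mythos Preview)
Your overall architecture matches the paper's: start from a separating transcendence basis, adjoin a primitive element, clear denominators to get a monic $f$ with nonzero discriminant, and then perform a change of variables in the transcendence basis to force~(c) while securing~(3). The initial steps (1), (2), (a), (b) are handled identically.

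The difference lies in the shear. You use a \emph{linear} substitution $t_i = t_i^0 - \lambda_i x$ and read off the leading $X$-coefficient of the discriminant as $\delta_0^{\mathrm{top}}(\lambda,1)$; this is the classical Noether-normalization device and is arguably cleaner than what the paper does. The paper instead takes $t_i = z_i - c_i x^{m_i}$ with carefully chosen \emph{powers} $m_j = (\ell+1)^j$, large enough that the monomials of $\delta_Y(\bar f)$ acquire pairwise distinct $X$-degrees after substitution; the leading coefficient then becomes $a_{i^0}\prod c_j^{i^0_j}$, which is nonzero as soon as all $c_j\neq 0$. The point of this apparently heavier device is that it \emph{decouples} (c) from (3): once the $m_j$ are fixed, condition~(c) holds for \emph{every} choice of nonzero $c_j$, so (3) can be handled independently by the Zariski--Matsusaka theorem (stated in the paper as Proposition~\ref{P:ZM} and iterated in Corollary~\ref{C:FGF1}), which for each step excludes only finitely many values of $c_j$.

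Your route, by contrast, couples the two conditions and must produce a single $\lambda$ meeting both. This is where your argument is thin: you assert that the set of $\lambda$ for which $K/P(t_1,\ldots,t_r)$ is regular ``contains a nonempty Zariski-open subset of $\A^r_P$'' by a Bertini-type principle, but you do not name the precise statement, and the standard Bertini theorems do not quite say this directly (geometric integrality of fibers is open on the base of a fixed morphism, whereas here the morphism itself varies with $\lambda$). The gap is not fatal: one can instead run the paper's iterative Zariski--Matsusaka argument with all $m_i=1$, noting that at each stage both the regularity condition and the nondegeneracy of the partially specialized polynomial $\delta_0^{\mathrm{top}}(\lambda_1,\ldots,\lambda_{i-1},T_i,\ldots,T_r,1)$ exclude only finitely many values of $\lambda_i$. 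So your linear shear does work, but the clean justification of (3) is Zariski--Matsusaka rather than an unspecified Bertini statement, and the paper's power trick is precisely what lets it avoid the simultaneous-choice bookkeeping.
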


To derive Proposition \ref{P:FGF1} from this proposition, we let $F$ denote the prime subfield of $K$. Since $F$ is perfect, the extension $K/F$ has a separating
transcendence basis $s_1, \ldots , s_a$ (cf. \cite[Ch. VIII, Proposition 4.1]{La}). Set $E = F$ if the characteristic is zero, and $E = F(s_1)$ if the characteristic is $p > 0$.
Then $K/E$ is separable (cf. \cite[Ch. VIII, Cor. 5.6]{La}). Let $P$ be the algebraic closure of $E$ in $K$. Note that $P/E$ is a finite extension, and therefore $P$ is a global field. Furthermore, the extension $K/P$ is still separable, hence regular, and of transcendence degree $> 0$ as $K$ is not global by assumption. Now, Proposition \ref{P:FGF1}
immediately follows from Proposition \ref{P:FGF2}. $\Box$

\medskip

\addtocounter{thm}{1}

\noindent {\bf \ref{S:FinGenF}.6. Proof of Proposition \ref{P:FGF2}.} We will need the following case of a theorem of Zariski-Matsusaka (cf. \cite[Ch. VIII, Theorem 5.7]{La}).
\begin{prop}\label{P:ZM}
Let $K/k$ be a regular finitely generated field extension, and let $y , z \in K$ be algebraically independent over $k$ such that at least one of them does not
lie in $K^pk$ if the characteristic is $p > 0$. Then for all but finitely many $c \in k$, the field $K$ is regular over $k(y + cz)$.
\end{prop}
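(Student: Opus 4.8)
The plan is to use the characterization of regularity of $K/k(y+cz)$ as the conjunction of two conditions — separability and the relative algebraic closedness of $k(y+cz)$ in $K$ — and to bound the exceptional set of $c\in k$ for each condition separately. Note first that for every $c\in k$ the element $w_c:=y+cz$ is transcendental over $k$ (since $y,z$ are algebraically independent), so $k(w_c)$ is a rational function field and the question is meaningful. For separability, in characteristic $0$ there is nothing to prove, so assume $\mathrm{char}\,k=p>0$. Since $K/k$ is regular and finitely generated it is separably generated, and hence an element $a\in K$ can be completed to a separating transcendence basis of $K/k$ — which makes $K/k(a)$ separable — as soon as $da\neq 0$ in $\Omega_{K/k}$ (extend $\{da\}$ to a $K$-basis of $\Omega_{K/k}$, using that $dK$ spans $\Omega_{K/k}$). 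Now $dw_c=dy+c\,dz$, and the hypothesis that $y$ or $z$ does not lie in $K^p k$ says exactly that $dy\neq 0$ or $dz\neq 0$, because $\ker(d\colon K\to\Omega_{K/k})=kK^p$. The equation $dy+c\,dz=0$ then has at most one solution $c\in k$, so $K/k(w_c)$ is separable for all but at most one $c$.

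For the relative algebraic closedness I first reduce to a surface. Let $L_0$ be the algebraic closure of $k(y,z)$ in $K$; it is finite over $k(y,z)$, and since $k$ is algebraically closed in $K$ it is algebraically closed in $L_0$, so $L_0/k$ is a regular extension of transcendence degree $2$ — geometrically, the function field of a geometrically integral surface $S$ admitting a finite dominant map $S\to\mathbb{A}^2$ given by $(y,z)$. For each $c$ the algebraic closure $L_c$ of $k(w_c)$ in $K$ is contained in $L_0$ (any element of $K$ algebraic over $k(w_c)\subseteq k(y,z)$ is algebraic over $k(y,z)$, hence lies in $L_0$), and therefore coincides with the algebraic closure of $k(w_c)$ in $L_0$. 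Thus $k(w_c)$ is algebraically closed in $K$ if and only if it is algebraically closed in $L_0$, i.e. if and only if the generic fibre of $S\to\mathbb{A}^1$, $s\mapsto w_c(s)$, is geometrically integral over $k(w_c)$. Moreover the differential hypothesis descends to $L_0$, since $\Omega_{L_0/k}\to\Omega_{K/k}$ carries $dy,dz$ to $dy,dz$, so the map $S\to\mathbb{A}^2$ remains generically separable.

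What now remains is a Bertini-type irreducibility statement for $S$ and the pencil of lines $\{y+cz=\lambda\}$: the pull-back to $S$ of the generic member is geometrically integral for the generic slope. Introducing an indeterminate slope $t$, this is precisely the regularity of $L_0\otimes_k k(t)$ over $k(t)(y+tz)$, which is the classical Bertini/Zariski–Matsusaka irreducibility theorem (cf. Lang), the separability needed in characteristic $p$ being supplied by $dy\neq 0$ or $dz\neq 0$. Finally, geometric integrality of the generic fibre in the family parametrized by $c\in\mathbb{A}^1_k$ is a constructible condition on $c$; since it holds at the generic point $c=t$, it holds on a dense open subset of $\mathbb{A}^1_k$, whose complement is a proper closed subset and hence a finite set of closed points. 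This leaves only finitely many exceptional $c\in k$ for the relative algebraic closedness, and combining with the at most one exceptional $c$ from separability gives the proposition.

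The genuinely hard ingredient is the Bertini irreducibility theorem in positive characteristic: the inseparable pathologies — for instance $y,z\in K^p k$ over a large $k$, in which case $y+cz$ is a $p$-th power and $k(y+cz)$ fails to be algebraically closed in $K$ for \emph{every} $c$ — show that the differential hypothesis is indispensable, and checking that $dy\neq 0$ or $dz\neq 0$ is exactly the condition ruling them out is the delicate point. Everything else is routine bookkeeping with differentials, relative algebraic closures, and constructibility of geometric integrality in families.
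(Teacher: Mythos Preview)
The paper does not prove this proposition at all: it is quoted as ``a case of a theorem of Zariski--Matsusaka'' with a bare reference to Lang's \emph{Algebra}, Ch.~VIII, Theorem~5.7. So your attempt is already more than the paper offers, and the overall architecture you give --- handle separability via differentials, reduce relative algebraic closedness to a surface, invoke a Bertini-type irreducibility for the generic slope, then specialize by constructibility --- is exactly the skeleton of the standard proof. The separability paragraph is clean and correct, and the reduction to $L_0$ is fine (note that your differential argument there works simply because the map $\Omega_{L_0/k}\to\Omega_{K/k}$ sends $d_{L_0}y$ to $d_Ky$, so nonvanishing of the image forces nonvanishing of the source; no injectivity is needed).

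Two points deserve tightening. First, your ``i.e.'' identifying ``$k(w_c)$ algebraically closed in $L_0$'' with ``geometric integrality of the generic fibre'' is not literally correct in positive characteristic: geometric irreducibility is equivalent to \emph{separable} closedness, and geometric integrality additionally requires separability of $L_0/k(w_c)$. This is harmless because you have already secured separability for all but one $c$, and on that cofinite set separable and algebraic closure in $L_0$ coincide --- but you should say so. Second, and more substantively, your invocation of ``the classical Bertini/Zariski--Matsusaka irreducibility theorem (cf.\ Lang)'' for the regularity of $L_0(t)/k(t)(y+tz)$ points to the same reference the paper cites for the entire proposition, which makes the argument look circular. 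It is not, provided you mean the Bertini irreducibility theorem (generic hyperplane sections of a geometrically irreducible variety with $\geq 2$-dimensional image are geometrically irreducible), which is proved independently --- e.g.\ in Jouanolou's \emph{Th\'eor\`emes de Bertini}. If you cite that instead, the argument is sound; as written, a reader cannot tell whether you have actually escaped the black box.
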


\begin{cor}\label{C:FGF1}
Let $x, z_1, \ldots , z_r$ be a separating transcendence basis for a regular field extension $K$ of an infinite field $P$. Then for any positive integers $m_1, \ldots ,
m_r$ and any infinite subset $\mathcal{P} \subset P$ there exist $c_1, \ldots , c_r \in \mathcal{P}$ such that for the elements
$$
t_1 = z_1 + c_1x^{m_1}, \ldots ,  t_r = z_r + c_rx^{m_r},
$$
the field $K$ is a regular extension of $P(t_1, \ldots , t_r)$.
\end{cor}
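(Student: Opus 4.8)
The plan is to build the elements $t_1, \ldots, t_r$ one at a time, using Proposition \ref{P:ZM} to adjoin one $t_i$ to the base field at each stage while preserving the regularity of $K$ over it. Concretely, I would prove by induction on $i$ that one can choose $c_1, \ldots, c_i \in \mathcal{P}$ so that $K$ is a regular extension of $L_i := P(t_1, \ldots, t_i)$, where $t_j = z_j + c_j x^{m_j}$. The case $i = 0$ is precisely the hypothesis that $K/P$ is regular, and the case $i = r$ is the desired assertion.

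For the inductive step, suppose $c_1, \ldots, c_{i-1} \in \mathcal{P}$ have been chosen so that $K/L_{i-1}$ is regular (note that $K/L_{i-1}$ is finitely generated, as $K/P$ is). I would apply Proposition \ref{P:ZM} with $k = L_{i-1}$, $y = z_i$, $z = x^{m_i}$ and $c = c_i$, so that $y + cz = t_i$ and $k(y + cz) = L_i$. The algebraic independence of $z_i$ and $x^{m_i}$ over $L_{i-1}$ is easy: since $z_j = t_j - c_j x^{m_j}$, one has $P(t_1, \ldots, t_{i-1}, x, z_i) = P(x, z_1, \ldots, z_i)$, a field of transcendence degree $i+1$ over $P$; hence the $i+1$ generators $t_1, \ldots, t_{i-1}, x, z_i$ are algebraically independent over $P$, so $z_i$ and $x$, and therefore $z_i$ and $x^{m_i}$, are algebraically independent over $L_{i-1}$. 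Proposition \ref{P:ZM} then yields that $K/L_i$ is regular for all but finitely many $c \in L_{i-1}$. Since $\mathcal{P} \subseteq P \subseteq L_{i-1}$ is infinite, only finitely many of its elements are excluded, so I may pick $c_i \in \mathcal{P}$ making $K/L_i$ regular, which completes the induction.

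The point requiring the most care is the characteristic-$p$ hypothesis of Proposition \ref{P:ZM}, namely that at least one of $z_i, x^{m_i}$ does not lie in $K^p \cdot L_{i-1}$. I would take $y = z_i$ and verify $z_i \notin K^p \cdot L_{i-1}$. The key input is that a separating transcendence basis is $p$-independent, so for the basis $x, z_1, \ldots, z_r$ one has $z_i \notin K^p \cdot P(x, z_1, \ldots, \widehat{z_i}, \ldots, z_r)$; since $L_{i-1} = P(t_1, \ldots, t_{i-1}) \subseteq P(x, z_1, \ldots, z_{i-1})$ is contained in $P(x, z_1, \ldots, \widehat{z_i}, \ldots, z_r)$, this gives $z_i \notin K^p L_{i-1}$, as needed. (Equivalently, using that $K/L_{i-1}$ is separable, one can argue via Kähler differentials: $dx, dz_1, \ldots, dz_r$ is a $K$-basis of $\Omega_{K/P}$, and quotienting by the linearly independent relations $dt_j = dz_j + c_j m_j x^{m_j - 1}\,dx$ for $j < i$ shows that $dz_i \neq 0$ in $\Omega_{K/L_{i-1}}$, whence $z_i \notin K^p L_{i-1}$.) In characteristic zero this condition is vacuous and the whole argument reduces to a clean induction; there the only subtlety is the bookkeeping guaranteeing that each $c_i$ can be selected from $\mathcal{P}$ rather than merely from $L_{i-1}$.
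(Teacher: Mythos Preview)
Your proof is correct and follows essentially the same approach as the paper: both build the $t_i$ one at a time via Proposition~\ref{P:ZM}, with the paper phrasing this as induction on $r$ (replacing $P$ by $P_1 = P(t_1)$ and noting that $x, z_2, \ldots, z_r$ remains a separating transcendence basis over $P_1$) while you phrase it as induction on $i$ over the growing subfields $L_i$. Your treatment of the characteristic-$p$ condition $z_i \notin K^p L_{i-1}$ is more detailed than the paper's (which simply asserts $z_1 \notin K^p P$ as ``clear''), but the underlying argument is the same.
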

\begin{proof}
We will induct on $r$, observing that there is nothing to prove if $r = 0$. So, let $r > 0$. Clearly, $z_1 \notin K^p P$ if the characteristic is $p > 0$, so
by Proposition \ref{P:ZM} there exists $c_1 \in \mathcal{P}$ such that $K$ is regular over $P_1 := P(t_1)$ where $t_1 = z_1 + c_1x^{m_1}$. Then $x, z_2, \ldots ,z_r$ is
a separating transcendence basis of $K$ over $P_1$, and by  induction there exist $c_2, \ldots , c_r \in \mathcal{P} \subset P_1$ such that for $t_2 = z_2 +
c_2x^{m_2}, \ldots , t_r = z_r + c_rx^{m_r}$, the field $K$ is regular over $P_1(t_2, \ldots , t_r) = P(t_1, \ldots , t_r)$.
\end{proof}

Turning now to the proof of Proposition \ref{P:FGF2} (which imitates the proof of Noether's Normalization Lemma), we first observe that since  $K/P$ is
a finitely generated regular field extension of transcendence degree $> 0$, one can
find a separating transcendence basis $x, z_1, \ldots , z_r$. Then there exists $y \in K$ such that $K = P(x, z_1, \ldots , z_r, y)$. Multiplying $y$ by a suitable
element of $P[x, z_1, \ldots , z_r]$, we may assume that there exists an irreducible polynomial $\bar{f} \in P[X, Z_1, \ldots , Z_r, Y]$ of the form
$$
\bar{f} = Y^d + \bar{p}_{d-1}(X, Z_1, \ldots , Z_r)Y^{d-1} + \cdots + \bar{p}_0(X, Z_1, \ldots , Z_r)
$$
such that $\bar{f}(x, z_1, \ldots , z_r, y) = 0$. Then $\bar{f}(x, z_1, \ldots , z_r, Y) \in P(x, z_1, \ldots , z_r)[Y]$ is the minimal polynomial for $y$ over
$P(x, z_1, \ldots , z_r)$, hence separable, and therefore the discriminant $\delta_Y(\bar{f}) \in P[X, Z_1, \ldots , Z_r]$ is nonzero. If $r = 0$ then $K = P(x , y)$, and there is nothing to prove, so we assume that $r > 0$. Fix positive integers $m_1, \ldots , m_r$. By Corollary \ref{C:FGF1}, we can pick \emph{nonzero} $c_1, \ldots , c_r \in P$
such that for
$$
t_1 = z_1 - c_1x^{m_1}, \ldots , t_r = z_r - c_r x^{m_r},
$$
the field $K$ is a regular extension of $P(t_1, \ldots , t_r)$. Then properties (1)-(3) in the statement of the proposition are clearly satisfied, and it remains
to pick the integers $m_1, \ldots , m_r$ so as to satisfy property (4) as well (the choice of the $c_i$'s will play no role as long as they are nonzero). We introduce
new variables $T_i = Z_i - c_i X^{m_i}$, $i = 1, \ldots , r$. Since $P[X, Z_1, \ldots , Z_r] = P[X, T_1, \ldots , T_r]$, we can express $\bar{p}_0, \ldots , \bar{p}_{d-1}$
(resp., $\bar{f}$) in terms of $X, T_1, \ldots , T_r$ (resp., $X, T_1, \ldots , T_r, Y$), and we will denote the resulting polynomials by $p_0, \ldots , p_{d-1}$ (resp.,
$f$). Clearly, we have
$$
f = Y^d + p_{d-1}(X, T_1, \ldots , T_r)Y^{d-1} + \cdots + p_0(X, T_1, \ldots , T_r) \ \ \text{and} \ \ \delta_Y(f) = \delta_Y(f).
$$
Write
$$
\delta_Y(\bar{f}) = \sum_{(i_0, i_1, \ldots , i_r) \in I} a_{i_0i_1 \cdots i_r} X^{i_0} Z_1^{i_1} \cdots Z_r^{i_r} \ \ \text{with} \ \ a_{i_0i_1 \ldots i_r} \neq 0,
$$
where the sum is taken over a finite subset $I \subset (\mathbb{N} \cup \{ 0 \})^{r+1}$. Pick an integer $\ell$ so that $\ell \geqslant i_j$ for every $(i_0, i_1, \ldots , i_r)
\in I$ and all $j = 0, \ldots , r$, and then set  $m_j = (\ell + 1)^j$ for $j = 1, \ldots , r$. We have
$$
\delta_Y(f) = \sum_{(i_0, i_1, \ldots , i_r) \in I} a_{i_0i_1 \cdots i_r} X^{i_0} (T_1 + c_1X^{m_1})^{i_1} \cdots (T_r + c_rX^{m_r})^{i_r}.
$$
Note that
$$
X^{i_0} (T_1 + c_1X^{m_1})^{i_1} \cdots (T_r + c_rX^{m_r})^{i_r} = \left( \prod_{j = 1}^r c_j \right) \cdot X^{d(i_0, i_1, \ldots , i_r)} + \text{(terms of lower}\,X\text{-degree)},
$$
where
$$
d(i_0, i_1, \ldots , i_r) := i_0 + i_1(\ell+1) + \cdots + i_r(\ell+1)^r.
$$
It easily follows from our choice of $\ell$ that the numbers $d(i_0, i_1, \ldots , i_r)$ for $(i_0, i_1, \ldots , i_r) \in I$ are all distinct, and in particular
there exists only one $(i^0_0, i^0_1, \ldots i^0_r) \in I$ for which $d(i^0_0, i^0_1, \ldots i^0_r)$ equals
$$
m := \max_{(i_0, i_1, \ldots , i_r) \in I} d(i_0, i_1, \ldots , i_r).
$$
Then
$$
\delta_Y(f) = a_{i_0i_1 \cdots i_r} \cdot \left( \prod_{j=1}^r c_j \right) \cdot X^m + q_{m-1}(T_1, \ldots , T_r) X^{m-1} + \cdots ,
$$
as required. $\Box$

\vskip5mm

\section{Finiteness of unramified cohomology in degree one}\label{S:UnramClass}

As we indicated in \S \ref{S:Intro}, item (II) of Proposition \ref{P:Out1}  will be verified by employing a suitable modification of the strategy used in the proof
of the weak Mordell-Weil theorem. This strategy is based on the consideration of unramified cohomology classes, and in this section we recall the relevant definitions
and discuss the basic set-up.

Let $\ell/k$ be a Galois extension with Galois group $\Gamma = \Ga(\ell/k)$. For a $\Gamma$-module $M$, the cohomology group $H^i(\Gamma , M)$ will, as usual,  be denoted by $H^i(\ell/k , M)$. Furthermore, we will write $M(k)$ for $M^{\Gamma}$ and $H^i(k , M)$ for $H^i(k^{\mathrm{sep}}/k , M)$, where $k^{\mathrm{sep}}$ is a (fixed) separable
closure of $k$. Given a discrete valuation $v$ of $k$, we let $k_v^{\mathrm{ur}}$ denote the maximal unramified extension of the completion $k_v$. Let $M$ be a finite $\Ga(k^{\mathrm{sep}}_v/k_v)$-module. We say that a cohomology class $x \in H^i(k_v , M)$ $(i \geqslant 1)$ is  \emph{unramified} if it lies in the image of the inflation map $
H^i(k^{\mathrm{ur}}_v/k_v , M(k^{\mathrm{ur}}_v)) \to H^i(k_v , M)$.
In the current paper, we will use this notion only for $i = 1$ where it is equivalent, by the inflation-restriction exact sequence,  to the fact that $x$ lies in the kernel of the restriction map $H^1(k_v , M) \to
H^1(k^{\mathrm{ur}}_v , M)$. Furthermore, for a $\Ga(k^{\mathrm{sep}}/k)$-module $M$, a class $x \in H^i(k , M)$ is unramified at $v$ if its image under the restriction
map $H^i(k , M) \to
H^i(k_v , M)$ is unramified. Finally, given a set $T$ of discrete valuations of $k$, we say that $x \in H^i(k , T)$ is unramified at $T$ if it is unramified at every $v \in T$. The subgroup of all classes in $H^i(k , M)$ that are unramified at $T$ will be denoted by $H^i(k , M)_T$.

\vskip2mm

In order to formulate the conditions that guarantee the finiteness of $H^1(k , M)_T$ in certain situations, we need to introduce the following definitions.
Let $T$ be a set of discrete valuations of $k$ satisfying condition (A) (see \S \ref{S:Intro}). We let $\mathrm{Div}(T)$
denote the free abelian group on the set $T$, the elements of which
will be called ``{\it divisors}."  Since $T$ satisfies (A), to any
$a \in k^{\times}$ we can associate the ``{\it principal divisor}"
$$
(a) = \sum_{v \in T} v(a) \cdot v,
$$
and we let $\mathrm{P}(T)$ denote the subgroup of $\mathrm{Div}(T)$
formed by all principal divisors. The quotient
$\mathrm{Div}(T)/\mathrm{P}(T)$ will be called the {\it Picard
group} of $T$ and denoted by $\mathrm{Pic}(T)$ (or $\mathrm{Pic}(k , T)$ if the field $k$ needs to be specified).
%or $\mathrm{Pic}(k , T).$
Furthermore, we define the {\it group of $T$-units} $\mathrm{U}(T)$ (or $\mathrm{U}(k , T)$) by
%we let
$$
\mathrm{U}(T) = \{ a \in k^{\times} \: \vert \: v(a) = 0 \ \
\text{for all} \ \ v \in T \} = \bigcap_{v \in T} \cO_{k ,
v}^{\times}.
$$
%where $\cO_{k, v}$ is the valuation ring of $v.$
%we will call $\mathrm{U}(T)$ the {\it group of $T$-units}.
Now, for an integer $n > 1$, we introduce the following finiteness condition

\medskip

\noindent $(\mathbf{F}(k , T))_n$ \  {\it the groups $\mathrm{U}(k , T)/\mathrm{U}(k , T)^n$ and ${}_n\mathrm{Pic}(k , T)$ are finite.}

\medskip

\noindent We will also consider the condition

\medskip

\noindent  $(\mathbf{F}(k , T))$ \  {\it the groups $\mathrm{U}(k , T)$ and $\mathrm{Pic}(k , T)$ are finitely generated.}

\medskip

\noindent Obviously, $(\mathbf{F}(k , T))$ implies $(\mathbf{F}(k , T))_n$ for all $n > 1$. The condition $(\mathbf{F}(k , T))_n$  plays the key role
in the following.
\begin{prop}\label{P:UR777}
Let $n > 1$ be an integer prime to $\mathrm{char}\: k$. Assume that  condition $(\mathbf{F}(k , T))_n$ holds.

\medskip

\noindent {\rm (1)} \parbox[t]{15cm}{If $k$ contains a primitive $n$th root of unity, then the group $H^1(k , \mu_n)_T$ is finite of order dividing
$\vert \mathrm{U}(T) / \mathrm{U}(T)^n \vert \cdot \vert {}_n\mathrm{Pic}(T) \vert$.}

\medskip

\noindent {\rm (2)} \parbox[t]{15cm}{If $n$ is prime to $\mathrm{char}\: k^{(v)}$ for all $v \in T$, then $H^1(k , \mu_n)_T$ is finite
of order equal to $\vert \mathrm{U}(T) / \mathrm{U}(T)^n \vert \cdot \vert {}_n\mathrm{Pic}(T) \vert$.}
\end{prop}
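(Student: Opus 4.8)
The plan is to make $H^1(k,\mu_n)$ completely explicit via Kummer theory, to translate the unramified condition into a divisibility condition on valuations, and then to read off both assertions from a single short exact sequence governed by $(\mathbf{F}(k , T))_n$. First I would set up Kummer theory: since $n$ is prime to $\mathrm{char}\: k$, the Kummer sequence $1 \to \mu_n \to (k^{\mathrm{sep}})^{\times} \stackrel{n}{\longrightarrow} (k^{\mathrm{sep}})^{\times} \to 1$ together with Hilbert's Theorem 90 gives a canonical isomorphism $H^1(k , \mu_n) \simeq k^{\times}/(k^{\times})^n$; write $[a]$ for the class of $a \in k^{\times}$. (The hypothesis $\mu_n \subset k$ in part (1) makes $\mu_n$ a constant module, rendering this identification and the ensuing local picture especially transparent, but the identification itself holds in general.) The entire problem becomes to identify, inside this group, the subgroup of classes unramified at every $v \in T$, and to count it.

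Second, I would carry out the local analysis at a single $v \in T$. Applying Kummer theory over $k_v^{\mathrm{ur}}$ (legitimate since $n$ is prime to $\mathrm{char}\: k$) and using that $k_v^{\mathrm{ur}}/k_v$ is unramified, multiplication by $n$ on $0 \to \mathcal{O}^{\times} \to (k_v^{\mathrm{ur}})^{\times} \stackrel{v}{\longrightarrow} \Z \to 0$ (here $\mathcal{O}$ is the valuation ring of $k_v^{\mathrm{ur}}$) yields
\[ 0 \to \mathcal{O}^{\times}/(\mathcal{O}^{\times})^n \longrightarrow H^1(k_v^{\mathrm{ur}} , \mu_n) \stackrel{v}{\longrightarrow} \Z/n\Z \to 0, \]
and the restriction map $H^1(k_v , \mu_n) \to H^1(k_v^{\mathrm{ur}} , \mu_n)$ carries $[a]$ to the class whose image in $\Z/n\Z$ is $v(a) \bmod n$. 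Consequently, if $[a]$ is unramified at $v$ then necessarily $v(a) \equiv 0\ (\mathrm{mod}\ n)$, with \emph{no} restriction on the residue characteristic; this feeds part (1). When in addition $n$ is prime to $\mathrm{char}\: k^{(v)}$, the residue field of $k_v^{\mathrm{ur}}$ is separably closed of characteristic prime to $n$, so $\mathcal{O}^{\times}/(\mathcal{O}^{\times})^n$ vanishes by Hensel's lemma, the restriction map becomes injective, and $[a]$ is unramified at $v$ \emph{if and only if} $v(a) \equiv 0\ (\mathrm{mod}\ n)$. This equivalence is what upgrades "divides" to "equals" in part (2).

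Third comes the global count. Set $\Lambda = \{ [a] \in k^{\times}/(k^{\times})^n : v(a) \equiv 0\ (\mathrm{mod}\ n)\ \text{for all}\ v \in T \}$. Because $T$ satisfies (A), the assignment $a \mapsto \tfrac{1}{n}\sum_{v \in T} v(a) \cdot v$, reduced modulo $\mathrm{P}(T)$, defines a homomorphism $\psi \colon \Lambda \to \Pic(T)$, and I would verify directly that it fits into a short exact sequence
\[ 0 \to \mathrm{U}(T)/\mathrm{U}(T)^n \longrightarrow \Lambda \stackrel{\psi}{\longrightarrow} {}_n\Pic(T) \to 0. \]
Indeed $\psi$ is surjective onto the $n$-torsion since any divisor $D$ with $nD$ principal equals $\tfrac{1}{n}\mathrm{div}(a)$ for a suitable $a$; and $[a] \in \ker \psi$ means $\mathrm{div}(a) = n\,\mathrm{div}(b)$, whence $a = u b^n$ with $u \in \mathrm{U}(T)$, while $\mathrm{U}(T) \cap (k^{\times})^n = \mathrm{U}(T)^n$ because an $n$-th root of a $T$-unit is again a $T$-unit. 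Thus, under $(\mathbf{F}(k , T))_n$, the group $\Lambda$ is finite of order exactly $\vert \mathrm{U}(T)/\mathrm{U}(T)^n \vert \cdot \vert {}_n\Pic(T) \vert$.

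Finally I would assemble the two cases. By the forward direction of the local analysis, $H^1(k , \mu_n)_T \subseteq \Lambda$ always holds, so it is a subgroup of a finite group of the stated order; hence its order \emph{divides} $\vert \mathrm{U}(T)/\mathrm{U}(T)^n \vert \cdot \vert {}_n\Pic(T) \vert$, which is part (1). When $n$ is prime to $\mathrm{char}\: k^{(v)}$ for every $v \in T$, the equivalence from the local step makes the inclusion an equality, so the order \emph{equals} that product, which is part (2). The step I expect to be the genuine obstacle is the local computation of the second paragraph: correctly matching the Kummer identification with the valuation so that unramifiedness translates precisely into $n \mid v(a)$, and isolating the unit contribution $\mathcal{O}^{\times}/(\mathcal{O}^{\times})^n$ as exactly the obstruction that forces only "divides" at bad residue characteristics.
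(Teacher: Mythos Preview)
Your argument is correct and follows essentially the same route as the paper: Kummer theory identifies $H^1(k,\mu_n)$ with $k^{\times}/(k^{\times})^n$, the local analysis shows that unramifiedness at $v$ forces $n\mid v(a)$ (with the converse holding precisely when $n$ is prime to $\mathrm{char}\,k^{(v)}$), and the global count is done via the same short exact sequence $0 \to \mathrm{U}(T)/\mathrm{U}(T)^n \to \Lambda \to {}_n\Pic(T) \to 0$ (the paper's sequence (\ref{E:UR788}), with your $\Lambda$ being their $\mathrm{D}(T,n)$). The only cosmetic difference is that your local step is packaged via the snake lemma applied to $0\to\mathcal{O}^{\times}\to (k_v^{\mathrm{ur}})^{\times}\to\Z\to 0$, whereas the paper phrases it as ``$\chi_a$ unramified $\Leftrightarrow \sqrt[n]{a}\in k_v^{\mathrm{ur}}$''; these are the same computation.
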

\begin{proof}
It is well-known that there is an isomorphism
\begin{equation}\label{E:UR777}
H^1(k , \mu_n) \simeq k^{\times} / {k^{\times}}^n
\end{equation}
under which a coset $a {k^{\times}}^n$ corresponds to the  class of the cocycle $\chi_a$ given by
$$
\chi_a(\sigma) = \frac{\sigma\left( \sqrt[n]{a} \right)}{\sqrt[n]{a}} \ \ \text{for} \ \ \sigma \in \Ga(k^{\mathrm{sep}}/k)
$$
(the cohomology class obviously does not depend on the choice of $\sqrt[n]{a}$). In both cases (1) and (2), for any $v \in T$ the field
$k^{\mathrm{ur}}_v$ contains a primitive $n$th root of unity. It follows that the class of $\chi_a$ is unramified at $v$,
i.e. lies in the kernel of the restriction map $H^1(k , \mu_n)
\to H^1(k^{\mathrm{ur}}_v , \mu_n)$, if and only if $\chi_a$ vanishes on $\Ga(k^{\mathrm{sep}}_v/k^{\mathrm{ur}}_v)$, or equivalently, $\sqrt[n]{a} \in k^{\mathrm{ur}}_v$.
A necessary condition for this is $v(a) \equiv 0(\mathrm{mod}\: n)$. In fact, this condition is also sufficient if $(n , \mathrm{char} \: k^{(v)}) = 1$. This leads to the following
description of $H^1(k , \mu_n)_T$. Let
$$
\mathrm{E}(T , n) = \{ a \in k^{\times} \: \vert \: v(a) \equiv 0(\mathrm{mod}\: n) \ \ \text{for all} \ \ v \in T \},
$$
and let $\mathrm{D}(T , n) = \nu_n(\mathrm{E}(T , n))$, where $\nu_n \colon k^{\times} \to k^{\times} / {k^{\times}}^n$ is the canonical homomorphism. Then the isomorphism (\ref{E:UR777}) takes $H^1(k , \mu_n)_T$ into $D(T , n)$ in case (1), and gives an isomorphism between these two groups in case (2). Thus, to complete the proof of the proposition, it suffices to show  that $D(T , n)$ is finite of order $\vert \mathrm{U}(T) / \mathrm{U}(T)^n \vert \cdot \vert {}_n\mathrm{Pic}(T) \vert$. This fact follows from the existence
of the following exact sequence
\begin{equation}\label{E:UR788}
0 \to \mathrm{U}(T)/\mathrm{U}(T)^n
\stackrel{\alpha}{\longrightarrow} \mathrm{D}(T , n) \stackrel{\beta}{\longrightarrow}
{}_n\mathrm{Pic}(T) \to 0
\end{equation}
(cf. \cite[Ch. 6, Theorem 1.4]{LaDG}).
For the sake of completeness, we briefly recall the construction of
(\ref{E:UR788}). The map $\alpha$ is induced by the identity embedding
$\mathrm{U}(T) \hookrightarrow k^{\times}$; it is clearly injective and its
image is contained in $\mathrm{D}(T , n)$. To define $\beta$, we
observe that since the abelian group $\mathrm{Div}(T)$ is
torsion-free, for any $a \in \mathrm{E}(T , n)$ there exists a
unique $d(a) \in \mathrm{Div}(T)$ such that
\begin{equation}\label{E:UR799}
(a) = n \cdot d(a).
\end{equation}
Then $\beta$  sends $\nu_n(a)$ to the class $p(a) \in
\mathrm{Pic}(T)$ of $d(a)$. It follows from (\ref{E:UR799}) that
$p(a)$ depends only on $\nu_n(a)$ and belongs to
${}_n\mathrm{Pic}(T)$. Conversely, given any $p \in
{}_n\mathrm{Pic}(T)$, for a divisor $d \in \mathrm{Div}(T)$
representing $p$ we have $n \cdot d = (a)$ for some $a \in
k^{\times}$. Then $a \in \mathrm{E}(T , n)$ and $\beta(\nu_n(a)) =
p$, i.e. $\beta$ is surjective. Finally, suppose $b = \nu_n(a) \in
\mathrm{Ker}\: \beta$. Then
$$
(a) = n \cdot (c) \ \ \text{for some} \ \ c \in k^{\times}.
$$
It follows that $(a \cdot c^{-n}) = 0$, i.e. $a \cdot c^{-n} \in
\mathrm{U}(T)$, and
$$
b = \nu_n(a) = \nu_n(a \cdot c^{-n}),
$$
proving the exactness of (\ref{E:UR788}) in the middle term.
\end{proof}

\medskip

\addtocounter{thm}{1}

\noindent {\bf \ref{S:UnramClass}.2. Arbitrary finite module of coefficients.} We will now extend the above finiteness result to the case of an arbitrary finite module $M$
and estimate the order of $H^1(k , M)_T$.  Given a field extension $\ell/k$  and a set
$T_{\ell}$  of discrete valuations of $\ell$, we will say that $T_{\ell}$
{\it lies above} $T$, and write $T_{\ell} \mid T$, if the restriction to $k$ of
every $w \in T_{\ell}$ lies in $T$. Let now $M$ be a finite $\Ga(k^{\mathrm{sep}}/k)$-module, $\ell/k$  a finite
Galois extension, and $T_{\ell} \mid T$.
\begin{lemma}\label{L:UR777}
Assume that $H^1(\ell , M)_{T_{\ell}}$ is finite. Then $H^1(k , M)_T$ is also finite of order dividing $\vert H^1(\ell/k , M(\ell)) \vert \cdot \vert
H^1(\ell , M)_{T_{\ell}} \vert$.
\end{lemma}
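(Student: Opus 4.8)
The plan is to compare $H^1(k,M)$ with $H^1(\ell,M)$ through the standard inflation--restriction exact sequence attached to the finite Galois extension $\ell/k$,
$$
0 \to H^1(\ell/k, M(\ell)) \stackrel{\mathrm{inf}}{\longrightarrow} H^1(k, M) \stackrel{\mathrm{res}}{\longrightarrow} H^1(\ell, M),
$$
and then to intersect everything with the unramified subgroups. The whole argument rests on a single compatibility between the two notions of ``unramified,'' after which the finiteness and the order estimate are purely formal.

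The key step is to verify that restriction carries unramified classes to unramified classes, i.e. that $\mathrm{res}\bigl(H^1(k,M)_T\bigr) \subseteq H^1(\ell,M)_{T_\ell}$. Fix $w \in T_\ell$ and let $v = w\vert_k$, which lies in $T$ precisely because $T_\ell \mid T$. Since $\ell/k$ is finite, $\ell_w/k_v$ is a finite extension of complete fields, and the compositum $k_v^{\mathrm{ur}} \cdot \ell_w$ is unramified over $\ell_w$; hence $k_v^{\mathrm{ur}} \subseteq \ell_w^{\mathrm{ur}}$ once compatible embeddings of separable closures are fixed. Now for $x \in H^1(k,M)_T$, the image of $x$ in $H^1(k_v, M)$ dies in $H^1(k_v^{\mathrm{ur}}, M)$ by the definition of unramifiedness at $v$; chasing the commutative square that relates restriction along $k_v \to \ell_w$ with restriction along $k_v^{\mathrm{ur}} \to \ell_w^{\mathrm{ur}}$, the image of $x$ in $H^1(\ell_w, M)$ then dies in $H^1(\ell_w^{\mathrm{ur}}, M)$, so $\mathrm{res}(x)$ is unramified at $w$. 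As $w \in T_\ell$ was arbitrary, this gives the inclusion. I expect this reconciliation of the two unramified conditions across the base change to be the only point that genuinely requires an argument; everything else is bookkeeping.

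With this in hand I would restrict the inflation--restriction sequence to the unramified subgroups. The restriction map sends $H^1(k,M)_T$ into $H^1(\ell,M)_{T_\ell}$, and the kernel of this restricted map is contained in $\ker(\mathrm{res}) = \mathrm{im}(\mathrm{inf}) \cong H^1(\ell/k, M(\ell))$, inflation being injective in degree one. Thus there is an exact sequence
$$
0 \to Q \longrightarrow H^1(k,M)_T \stackrel{\mathrm{res}}{\longrightarrow} H^1(\ell,M)_{T_\ell}
$$
with $Q \subseteq H^1(\ell/k, M(\ell))$. Since $M$ and $\Ga(\ell/k)$ are finite, $H^1(\ell/k, M(\ell))$ is finite, and $H^1(\ell,M)_{T_\ell}$ is finite by hypothesis; hence $H^1(k,M)_T$ is finite, being an extension of a subgroup of the latter by the finite group $Q$. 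For the divisibility statement I would simply observe that $\vert H^1(k,M)_T \vert = \vert Q \vert \cdot \vert \mathrm{im}(\mathrm{res}) \vert$, where $\vert Q \vert$ divides $\vert H^1(\ell/k, M(\ell)) \vert$ and $\vert \mathrm{im}(\mathrm{res}) \vert$ divides $\vert H^1(\ell,M)_{T_\ell} \vert$; multiplying these gives exactly the claimed bound.
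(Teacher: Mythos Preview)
Your argument is correct and follows essentially the same route as the paper: both use the inflation--restriction sequence together with the observation that restriction sends $H^1(k,M)_T$ into $H^1(\ell,M)_{T_\ell}$. The paper simply asserts this last fact without the detailed justification you provide via $k_v^{\mathrm{ur}} \subseteq \ell_w^{\mathrm{ur}}$, but otherwise the proofs are the same.
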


The proof immediately follows from the inflation-restriction exact sequence
$$
0 \to H^1(\ell/k , M(\ell)) \stackrel{\mathbf{i}}{\longrightarrow} H^1(k , M) \stackrel{\mathbf{r}}{\longrightarrow} H^1(\ell , M)
$$
as $\mathbf{r}$ takes $H^1(k , M)_{T}$ to $H^1(\ell , M)_{T_{\ell}}$.

\medskip

Combining Proposition \ref{P:UR777} with Lemma \ref{L:UR777}, we obtain the following.
\begin{prop}\label{P:UR778}
Let $M$ be a finite $\Ga(k^{\mathrm{sep}}/k)$-module that has $d$ generators and exponent dividing $n$ with $(n , \mathrm{char}\: k) = 1$ as an abelian group.
Assume that there exists a finite Galois extension $\ell$ equipped with a set $T_{\ell}$ of discrete valuations lying above $T$ such that

\vskip2mm

\noindent {\rm (a)} $\ell$ contains a primitive $n$th root of unity and $M = M(\ell)$,

\vskip1mm

\noindent {\rm (b)} condition $(\mathbf{F}(\ell , T_{\ell}))_n$ holds.

\vskip2mm

\noindent Then $H^1(k , M)_{T}$ is finite of order dividing
\begin{equation}\label{E:UR888}
\vert H^1(\ell/k , M(\ell)) \vert \cdot \left( \vert \mathrm{U}(\ell , T_{\ell})/\mathrm{U}(\ell , T_{\ell})^n \vert \cdot \vert {}_n\mathrm{Pic}(\ell , T_{\ell})
\vert \right)^d.
\end{equation}
\end{prop}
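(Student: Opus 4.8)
The plan is to combine the two preceding results exactly as the sentence before the statement indicates: first descend from $\ell$ down to $k$ by means of Lemma~\ref{L:UR777}, and then bound $H^1(\ell, M)_{T_\ell}$ by reducing the arbitrary module $M$ to copies of $\mu_n$, where Proposition~\ref{P:UR777} becomes applicable. The crucial feature that makes this reduction possible is condition (a): since $M = M(\ell)$, the group $\Ga(\ell^{\mathrm{sep}}/\ell)$ acts \emph{trivially} on $M$ after passage to $\ell$, so over $\ell$ the module $M$ is nothing but a finite abelian group of exponent dividing $n$ with $d$ generators, equipped with trivial Galois action.

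Concretely, I would first invoke Lemma~\ref{L:UR777}, which reduces the finiteness of $H^1(k, M)_T$ to that of $H^1(\ell, M)_{T_\ell}$ and introduces the factor $\vert H^1(\ell/k, M(\ell)) \vert$ into the bound. To estimate $\vert H^1(\ell, M)_{T_\ell} \vert$, I would use that, as an abelian group with $d$ generators and exponent dividing $n$, the trivial $\Ga(\ell^{\mathrm{sep}}/\ell)$-module $M$ embeds into $(\Z/n\Z)^d$. Because $\ell$ contains a primitive $n$th root of unity, $\mu_n \cong \Z/n\Z$ as $\Ga(\ell^{\mathrm{sep}}/\ell)$-modules, and this embedding therefore becomes an injection of trivial $\Ga(\ell^{\mathrm{sep}}/\ell)$-modules $M \hookrightarrow \mu_n^d$. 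As the coefficients carry trivial action, $H^1(\ell, -)$ coincides with $\Hom(\Ga(\ell^{\mathrm{sep}}/\ell), -)$, which is left exact; hence the embedding induces an injection $H^1(\ell, M) \hookrightarrow H^1(\ell, \mu_n)^d$ that is compatible with restriction to every completion $\ell_w$ and thus carries unramified classes to unramified classes. This gives an injection $H^1(\ell, M)_{T_\ell} \hookrightarrow \left( H^1(\ell, \mu_n)_{T_\ell} \right)^d$.

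Finally, condition (b) is precisely $(\mathbf{F}(\ell , T_{\ell}))_n$, so Proposition~\ref{P:UR777}(1) applies over $\ell$ and shows that $H^1(\ell, \mu_n)_{T_\ell}$ is finite of order dividing $\vert \mathrm{U}(\ell, T_\ell)/\mathrm{U}(\ell, T_\ell)^n \vert \cdot \vert {}_n\mathrm{Pic}(\ell, T_\ell) \vert$. Combining the injection above with this estimate shows that $\vert H^1(\ell, M)_{T_\ell} \vert$ divides $\left( \vert \mathrm{U}(\ell, T_\ell)/\mathrm{U}(\ell, T_\ell)^n \vert \cdot \vert {}_n\mathrm{Pic}(\ell, T_\ell) \vert \right)^d$, and feeding this into the bound from Lemma~\ref{L:UR777} yields the asserted estimate (\ref{E:UR888}). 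I do not expect a genuine obstacle here; the only point needing care is that the reduction from $M$ to $\mu_n^d$ truly respects the unramified condition, which I would verify by checking that the module map $M \hookrightarrow \mu_n^d$ commutes with the restriction maps $H^1(\ell, -) \to H^1(\ell_w, -)$, and hence with passage to $H^1(\ell_w^{\mathrm{ur}}, -)$, for each $w \in T_\ell$.
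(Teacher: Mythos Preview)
Your proposal is correct and follows essentially the same route as the paper: reduce to $\ell$ via Lemma~\ref{L:UR777}, then over $\ell$ identify $M$ with copies of $\mu$-type modules and apply Proposition~\ref{P:UR777}. The only cosmetic difference is that the paper uses the exact decomposition $M \simeq \mu_{n_1} \times \cdots \times \mu_{n_d}$ with $n_i \mid n$ (noting that the relevant invariants for $n_i$ divide those for $n$), whereas you use the embedding $M \hookrightarrow \mu_n^{\,d}$; both lead immediately to the same bound.
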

Indeed, by our assumption
$$
M \simeq \Z/n_1\Z \times \cdots \times \Z/n_d\Z \ \ \text{for some} \ \ n_i \vert n
$$
as abelian groups. Then it follows from (a) that
$$
M \simeq \mu_{n_1} \times \cdots \times \mu_{n_d}
$$
as $\Ga(k^{\mathrm{sep}}/\ell)$-modules. Since for $m \vert n$, the orders of $\mathrm{U}(\ell , T_{\ell})/\mathrm{U}(\ell , T_{\ell})^m$ and ${}_m\mathrm{Pic}(\ell , T_{\ell})$
are finite and divide the orders of $\mathrm{U}(\ell , T_{\ell})/\mathrm{U}(\ell , T_{\ell})^n$ and ${}_n\mathrm{Pic}(\ell , T_{\ell})$, respectively, our assertion follows from the previous statements.

\medskip

\addtocounter{thm}{1}

\noindent {\bf \ref{S:UnramClass}.5. Explicit estimations.} Let $k$ be a global field, $S \subset V^k$   a finite nonempty
subset containing $V_{\infty}^k$, and $T := V^k \setminus S$. Then
$\mathrm{U}(T)$  coincides with the group of units $\mathcal{O}_k(S)^{\times}$ of the ring $\mathcal{O}_k(S)$ of $S$-integers in $k$, hence is isomorphic
to $\mu(k) \times \Z^{\vert S \vert - 1}$, where $\mu(k)$ is the group of roots of unity in $k$ (Dirichlet's theorem, cf. \cite[Ch. 2, \S18]{ANT} or
\cite[Prop. 5.17]{RamVal}). Furthermore,  $\mathrm{Pic}(T)$ coincides with the class group of  $\mathcal{O}_k(S)$, hence finite
(cf. \cite[Ch. 2, \S17]{ANT} or \cite[Theorem 5.18]{RamVal}). In particular, condition $(\mathbf{F}(k , T))$ holds; in fact,
$$
\vert \mathrm{U}(T)/\mathrm{U}(T)^n \vert = (n , \vert \mu(k) \vert) \cdot n^{\vert S \vert - 1},
$$
while ${}_n\mathrm{Pic}(T)$ is the $n$-torsion subgroup of the class group of $\mathcal{O}_k(S)$, so has order dividing
$h_k(S , n)$, the largest divisor of the class number $h_k(S)$ of $\mathcal{O}_k(S)$ that involves only the prime divisors of $n$.

On the other hand, the order of $H^1(\ell/k , M(\ell))$ divides the order of the group of 1-cocycles $Z^1(\ell/k , M(\ell))$ which in turn can be embedded into the direct product
$M(\ell)^s$ where $s$ is the number of generators of $\Ga(\ell/k)$. This crude estimation can be improved and made explicit in various concrete situations that arise in applications (cf. \S\ref{S:Examples}).

%Here is one easy example.
%\begin{lemma}\label{L:UR999}
%Let $n = p^{\alpha_1}_1 \cdots p^{\alpha_r}_r$, where $p_1 < p_2 < \cdots < p_r$  and assume that $M = M(k)$. For $\ell = k(\zeta_n)$, where $\zeta_n$ is a primitive
%$n$-th root of unity, we have that
%$$
%\vert H^1(\ell/k , M(\ell)) \vert \ \ \text{divides} \ \ \left\{ \begin{array}{ccl} \displaystyle \prod_{i = 1}^r \vert {}_{(p_i - 1)p^{\alpha_i - 1}_i} M   \vert & \text{if} & p_1 %> 2 \ \text{or} \ p_1 = 2 \ \text{and} \ \ \alpha_1 \leqslant 2 \\ \displaystyle \vert {}_2 M \vert \cdot \vert {}_{2^{\alpha_1 - 2}} M \vert \cdot \prod_{i = 2}^r \vert {}_{(p_i - %1)p^{\alpha_i - 1}_i} M \vert & , & \text{otherwise} \end{array} \right.
%$$
%In particular, $\vert H^1(\ell/k , M(\ell)) \vert$ divides $\vert M \vert^r$ in the first case and $\vert {}_2 M \vert \cdot \vert M \vert^r$ in the second.
%\end{lemma}
%\begin{proof}
%We have $H^1(\ell/k , M(\ell)) = \mathrm{Hom}(\Ga(\ell/k) , M)$. On the other hand, $\Ga(\ell/k)$ is a subgroup of
%$$
%(\Z/n\Z)^{\times} \simeq \prod_{i = 1}^r (\Z/p^{\alpha_i}_i\Z)^{\times},
%$$
%and $(\Z/p^{\alpha_i}_i\Z)^{\times}$ is cyclic of order $(p_i - 1)p^{\alpha_i - 1}_i$ if $p_i$ is odd or $p_i = 2$ and $\alpha_i \leqslant 2$, and is $\Z/2\Z \times \Z/2^{\alpha_i - %2}\Z$ if $p_i = 2$ and $\alpha_i > 2$. So, our assertion follows.
%\end{proof}
%
%We note that if $n = p$ is a prime then for $\ell = k(\zeta_p)$ and any $\Ga(k^{\rm sep}/k)$-module $M$ such that $pM=0$, we have $H^1(\ell/k , M(\ell)) =0$.

\vskip5mm

\section{Back to the Brauer group of a curve: a local computation}\label{S:Loc}

Let $C$ be a smooth projective geometrically irreducible curve over a field $k$, and $n > 1$ be an integer prime to $\mathrm{char}\: k$.
We will follow up on the description of ${}_n\Br(k(C))_{\mathrm{ur}}$ given in \S \ref{S:BrExactSeq} assuming that $C(k) \neq \emptyset$. According
to Remark 2.2, this assumption implies that the natural embedding $\bar{k}^{\times} \to \bar{k}(C)^{\times}$, where $\bar{k} = k^{\mathrm{sep}}$, has a
$k$-defined section $\varepsilon \colon \bar{k}(C) \to \bar{k}^{\times}$, and then (\ref{E:Out2a}) combined with Hilbert's Theorem 90 implies
that for the group $\mathrm{P}(\bar{C})$ of principal divisors on $\bar{C} = C \times_k
\bar{k}$ we have $H^1(k , \mathrm{P}(\bar{C})) = 0$. The vanishing of this group tells us that the map $\mathrm{Div}(\bar{C})(k) \to \mathrm{Pic}(\bar{C})(k)$ is surjective. (Recall that for a $\Gamma$-module $M$, where $\Gamma = \Ga(\bar{k}/k)$, we write $M(k)$ to denote $H^0(\Gamma , M) = M^{\Gamma}$.) Again by Hilbert's Theorem 90, every element in $\mathrm{P}(\bar{C})(k)$ is the divisor of a function in $k(C)^{\times}$. We will now use these facts to construct a pairing
$$
\mu \colon H^1(k , \Z/n\Z) \times H^0(k , {}_n\mathrm{Pic}^0(\bar{C})) \longrightarrow {}_n\Br(k(C))_{\mathrm{ur}}.
$$
Let $\chi \in H^1(k , \Z/n\Z) = \mathrm{Hom}(\Gamma , \Z/n\Z)$ be a character of order $m \vert n$, and let $a  \in {}_n\mathrm{Pic}^0(\bar{C})(k)$. Pick a divisor $\hat{a} \in \mathrm{Div}(\bar{C})(k)$ representing  $a$. Then $n \hat{a} \in
\mathrm{P}(\bar{C})(k)$, and we set $f_a = \epsilon(n\hat{a})$, where $\epsilon \colon \mathrm{P}(\bar{C}) \to \bar{k}(C)^{\times}$ is the embedding given by identifying
$\mathrm{P}(\bar{C})$ with $\mathrm{Ker}\: \varepsilon$. We then consider  the corresponding cyclic algebra $(\chi , f_a)$ of degree dividing $n$ (cf. \cite[\S2.5]{GiSz} for precise definitions), and define $\mu$ by sending the pair $(\chi , a)$ to the class of $[\chi , f_a] \in \Br(k(C))$ of $(\chi , f_a)$.
\begin{prop}\label{P:Loc-Com1}
{\rm (1)} $[\chi , f_a] \in {}_n\Br(k(C))_{\mathrm{ur}}$;

\vskip1mm

{\rm (2)} the diagram
$$
\xymatrix{& H^1(k , \Z/n\Z) \otimes H^0(k , {}_n\mathrm{Pic}^0(\bar{C})) \ar[ld]_{\mu} \ar[rd]^{\nu} & \\ {}_n\Br(k(C))_{\mathrm{ur}} \ar[rr]^(.50){\omega_k} & & {}_n H^1(k , \mathrm{Pic}^0(\bar{C}))}
$$
%$$
%\begin{array}{rrc}
% & & H^1(k , \Z/n\Z) \otimes H^0(k , {}_n\mathrm{Pic}^0(\bar{C})) \\
% & \mu \swarrow & \downarrow \nu \\
%{}_n\Br(k(C))_{\mathrm{ur}} & \stackrel{\omega_k}{\longrightarrow} & {}_n H^1(k , \mathrm{Pic}^0(\bar{C}))
%\end{array}
%$$
where $\nu$ is the $\cup$-product $H^1(k , \Z/n\Z) \times H^0(k , {}_n\mathrm{Pic}^0(\bar{C})) \to H^1(k , {}_n\mathrm{Pic}^0(\bar{C}))$ followed by the natural map
$\theta_k \colon H^1(k , {}_n\mathrm{Pic}^0(\bar{C})) \to {}_nH^1(k , \mathrm{Pic}^0(\bar{C}))$, commutes.
\end{prop}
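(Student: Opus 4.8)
The plan is to reduce both assertions to a single explicit cocycle computation that identifies $\alpha([\chi , f_a])$ with the image under the connecting map $\gamma$ of the class $\nu(\chi \otimes a)$. Throughout I work with the sequences (\ref{E:Out2a}), (\ref{E:Picard-a}) and the maps $\alpha, \beta^0, \gamma$ from \S\ref{S:BrExactSeq}, and I choose a representative $\hat{a} \in \Div^0(\bar{C})(k)$ of $a$; this is possible since $a \in {}_n\Pic^0(\bar{C})(k)$ and $\Div^0(\bar{C})(k) \to \Pic^0(\bar{C})(k)$ is surjective because $H^1(k , \mathrm{P}(\bar{C})) = 0$.

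First I would record the cohomological description of the cyclic algebra. Writing $\partial \colon H^1(\Gamma , \Z/n\Z) \to H^2(\Gamma , \Z)$ for the connecting map of $0 \to \Z \stackrel{n}{\to} \Z \to \Z/n\Z \to 0$, one has $[\chi , f_a] = \partial\chi \cup f_a$ in $\Br'(K) = H^2(\Gamma , \bar{k}(C)^{\times})$, where $f_a \in k(C)^{\times} = H^0(\Gamma , \bar{k}(C)^{\times})$ (cf. \cite[\S2.5]{GiSz}). Concretely, fixing a set-theoretic lift $\tilde{\chi} \colon \Gamma \to \{0 , 1, \ldots , n-1\}$ of $\chi$ and setting $\partial\chi(\sigma , \tau) = n^{-1}(\tilde{\chi}(\sigma) + \tilde{\chi}(\tau) - \tilde{\chi}(\sigma\tau)) \in \Z$, the class $[\chi , f_a]$ is represented by the $2$-cocycle $(\sigma , \tau) \mapsto f_a^{\partial\chi(\sigma , \tau)}$. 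Since $\alpha$ is induced by the divisor map $\bar{k}(C)^{\times} \to \mathrm{P}(\bar{C})$ and $\mathrm{div}(f_a) = n\hat{a}$, naturality shows that $\alpha([\chi , f_a])$ is represented by
\[
(\sigma , \tau) \longmapsto \partial\chi(\sigma , \tau) \cdot n\hat{a} = \big(\tilde{\chi}(\sigma) + \tilde{\chi}(\tau) - \tilde{\chi}(\sigma\tau)\big)\,\hat{a} \in \mathrm{P}(\bar{C}).
\]
Next I would compute the right-hand side. The cup product $\chi \cup a \in H^1(\Gamma , {}_n\Pic^0(\bar{C}))$, formed via the action $\Z/n\Z \times {}_n\Pic^0(\bar{C}) \to {}_n\Pic^0(\bar{C})$, is represented by the $1$-cocycle $\sigma \mapsto \tilde{\chi}(\sigma)\,a$; applying $\theta_k$ (inclusion ${}_n\Pic^0(\bar{C}) \hookrightarrow \Pic^0(\bar{C})$) leaves the same formula, now valued in $\Pic^0(\bar{C})$. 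To apply $\gamma$ I lift along $\Div^0(\bar{C}) \to \Pic^0(\bar{C})$ using the $\Gamma$-invariant $\hat{a}$, obtaining the lift $\sigma \mapsto \tilde{\chi}(\sigma)\,\hat{a}$, and the standard formula for the connecting map then yields the $2$-cocycle
\[
(\sigma , \tau) \longmapsto \sigma(\tilde{\chi}(\tau)\hat{a}) - \tilde{\chi}(\sigma\tau)\hat{a} + \tilde{\chi}(\sigma)\hat{a} = \big(\tilde{\chi}(\sigma) + \tilde{\chi}(\tau) - \tilde{\chi}(\sigma\tau)\big)\,\hat{a},
\]
where I used $\sigma(\hat{a}) = \hat{a}$. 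This is literally the cocycle representing $\alpha([\chi , f_a])$, so $\alpha([\chi , f_a]) = \gamma\big(\nu(\chi \otimes a)\big)$ in $H^2(\Gamma , \mathrm{P}(\bar{C}))$.

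With this identity in hand, both parts follow at once. For (1), since $\gamma$ is the connecting map of (\ref{E:Picard-a}), its image is exactly $\ker \beta^0$ by exactness; hence $\rho^0([\chi , f_a]) = \beta^0(\alpha([\chi , f_a])) = \beta^0(\gamma(\nu(\chi\otimes a))) = 0$, and by (\ref{E:Out775a}) we conclude $[\chi , f_a] \in \Br'(K)_{\mathrm{ur}}$, which moreover lies in the $n$-torsion as a cyclic algebra of degree dividing $n$. For (2), the definition of $\omega_k$ says that $\omega_k([\chi , f_a])$ is the coset $b + \Phi(C , k)$ of any $b \in H^1(k , J)$ with $\gamma(b) = \alpha([\chi , f_a])$; the computation exhibits $b = \nu(\chi \otimes a)$, and since $C(k) \neq \emptyset$ forces $\Phi(C , k) = 0$ by Lemma \ref{L:Out2}, we obtain $\omega_k \circ \mu = \nu$, i.e. the diagram commutes.

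The main point requiring care is bookkeeping rather than any genuine obstacle: one must fix a single set-theoretic lift $\tilde{\chi}$ and use it consistently on both sides, and align the sign conventions in $[\chi , f_a] = \partial\chi \cup f_a$, in the cup product defining $\nu$, and in the connecting map $\gamma$. The argument is in essence an instance of the compatibility of cup products with connecting homomorphisms, and carrying it out through explicit cocycles as above sidesteps the need to invoke such a general lemma.
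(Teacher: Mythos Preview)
Your proof is correct and follows essentially the same route as the paper: both establish the identity $\alpha([\chi,f_a]) = \gamma(\nu(\chi\otimes a))$ in $H^2(\Gamma,\mathrm{P}(\bar{C}))$ by an explicit cocycle computation, the paper working on the cyclic quotient $\Gamma/\Delta$ with the standard cocycle for a cyclic algebra while you work directly on $\Gamma$ via the Bockstein description $[\chi,f_a] = \partial\chi \cup f_a$. The one organizational difference worth noting is that the paper proves (1) separately, by observing that $\rho([\chi,f_a])$ is represented by $n$ times a cocycle on a group of order $m\mid n$ and hence vanishes, whereas you deduce (1) immediately from the identity together with exactness $\mathrm{Im}\,\gamma = \ker\beta^0$; your packaging is slightly more economical but the content is the same.
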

\begin{proof}
(1): Let $x = [\chi , f_a]$. We need to show that $\rho(x) = 0$, where $\rho \colon \Br'(k(C)) \to H^2(k , \mathrm{Div}(\bar{C}))$ is defined by (\ref{E:def-rho-a}). Let $m \vert n$ be the order of $\chi$ as a character of $\Gamma = \Ga(\bar{k}/k)$, and let $\Delta = \mathrm{Ker}\: \chi$ so that $\Gamma/\Delta$ is a cyclic group of order $m$. Clearly, $\mathrm{Im}\: \chi = (n/m)\Z/n\Z$, and we pick $\sigma \in \Gamma$ such that $\chi(\sigma) = (n/m) (\mathrm{mod}\: n)$. Then the coset $\sigma \Delta$ generates $\Gamma / \Delta$,
and $x$ corresponds to the cohomology class in $H^2(k , \bar{k}(C)^{\times})$ given by the following cocycle on $\Gamma / \Delta$:
\begin{equation}\label{E:LocCom123}
\xi(\sigma^i \Delta , \sigma^j \Delta) = \left\{ \begin{array}{ccl} 1 & , & i + j < m \\ f_a & , & i + j \geqslant m \end{array}  \right. \ \ \ \ (0 \leqslant i , j \leqslant m - 1)
\end{equation}
Let $\hat{\xi}$ be the cocycle on $\Gamma / \Delta$ with values in $\mathrm{Div}(\bar{C})$ given by a formula similar to (\ref{E:LocCom123}) in which $f_a$
is replaced by $\hat{a}$. It follows from the construction of $\rho$ in \S\ref{S:BrExactSeq} that $\rho(x)$ is represented by the cocycle $n \cdot \hat{\xi}$. But since
$m = \vert \Gamma / \Delta \vert$ divides $n$, the class of $n \cdot \hat{\xi}$ in $H^2(k , \mathrm{Div}(\bar{C}))$ is trivial. Thus, $\rho(x) = 0$, as required.

%Since $x$ is represented by a cocycle supported on $\Gamma / \Delta$, where $\Delta = \mathrm{Ker} \: \chi$, whose values are $1$ and $f_a$, the fact that $(f_a) = n \cdot a$ %implies that $\rho(x)$ is represented by a cocycle on $\Gamma / \Delta$ with values  in $n \cdot \mathrm{Div}(\bar{C}) \subset \mathrm{Div}(\bar{C})$. Since $\mathrm{Div}(\bar{C})$ %is torsion-free, there exists a cocycle $\xi$ on $\Gamma /\Delta$ with values in $\mathrm{Div}(\bar{C})$ such that $\rho(x)  = n \cdot \xi$. But the order of $\Gamma / \Delta$ %divides $n$, so $H^2(\Gamma / \Delta , \mathrm{Div}(\bar{C}))$ is annihilated by $n$, and hence $\rho(x) = 0$.

\medskip

(2): We will use the natural maps $$\alpha \colon \Br'(k(C)) = H^2(k , \bar{k}(C)^{\times}) \to H^2(k , \mathrm{P}(\bar{C})) \ \  \text{and} \ \  \gamma \colon H^1(k ,
\mathrm{Pic}^0(\bar{C})) \to H^2(k , \mathrm{P}(\bar{C}))$$ introduced in \S\ref{S:BrExactSeq}. Since $C(k) \neq \emptyset$, by Lemma \ref{L:Out2}, the map $\gamma$ is injective, so it is enough to show that
$$
\gamma \circ \omega_k \circ \mu = \gamma \circ \nu.
$$
By construction, $\gamma \circ \omega_k = \alpha$, so what we need to prove is that
\begin{equation}\label{E:7777}
(\alpha \circ \mu)(\chi \otimes a) = (\gamma \circ \nu)(\chi \otimes a)
\end{equation}
for all $\chi \in H^1(k , \Z/n\Z)$ and $a \in H^0(k , {}_n\mathrm{Pic}^0(\bar{C}))$. Using the same notations as in the proof of part (1), we see that
the left-hand side of (\ref{E:7777}) is represented by a cocycle $\bar{\xi}$ on $\Gamma/\Delta$ with values in $\mathrm{P}(\bar{C})$ given by a formula similar
to (\ref{E:LocCom123}) in which $f_a$ is replaced with the corresponding principal divisor $(f_a)$. On the other hand, $\nu(\chi \otimes a)$ is represented by
the following cocycle
$$
\zeta(\tau) = \chi(\tau) \cdot a \in \mathrm{Pic}^0(\bar{C}), \ \ \tau \in \Gamma
$$
(since $a \in {}_n\mathrm{Pic}^0(\bar{C})$, the product is well-defined); note that $\zeta$ descends to $\Gamma / \Delta$. We will now compute the right-hand side of
(\ref{E:7777}) using the fact that $\gamma$ is the coboundary map associated with the exact sequence (\ref{E:Picard-a}). Let $\varphi \colon \Gamma \to \Z$ be a function defined by $\varphi(\tau) = i(n/m)$ if $\tau \Delta = \sigma^i \Delta$ with $0 \leqslant i \leqslant m - 1$.
Then the function
$$
\lambda \colon \Gamma \to \mathrm{Div}^0(\bar{C}), \ \ \lambda(\tau) := \varphi(\tau) \cdot \hat{a},
$$
has the property $\pi(\lambda(\tau)) = \zeta(\tau)$, where $\pi \colon \mathrm{Div}^0(\bar{C}) \to \mathrm{Pic}^0(\bar{C})$ is the canonical map.
Thus, the right-hand side of (\ref{E:7777}) is represented by the cocycle on $\Gamma / \Delta$ given by
$$
\kappa(\sigma^i \Delta , \sigma^j \Delta) = (\varphi(\sigma^i) + \varphi(\sigma^j) - \varphi(\sigma^{i+j})) \cdot \hat{a}.
$$
But
$$
\varphi(\sigma^i) + \varphi(\sigma^j) - \varphi(\sigma^{i + j}) = \left\{ \begin{array}{ccl} 0 & , & i + j < m \\ m \cdot (n/m) = n & , & i + j \geqslant m \end{array}     \right.
$$
Since $n \cdot \hat{a} = (f_a)$, we see that $\kappa$ coincides with $\bar{\xi}$, and our claim follows.
\end{proof}

\medskip

Let $J$ be the Jacobian of $C$. As before, we will tacitly identify $\mathrm{Pic}^0(\bar{C})$ with $J(\bar{k})$ as $\Gamma$-modules. Since $n$ is prime to
$\mathrm{char}\: k$, the $n$-torsion
${}_nJ(\bar{k})$ as an abstract group is isomorphic to $\left(\Z/n\Z\right)^{2g}$, where $g = \dim J$ is the genus of $C$ (cf. \cite[5.11]{GM-Ab-Var}). Assume now that  the $n$-torsion is
$k$-{\it rational}, i.e. ${}_nJ(\bar{k})  \subset J(k)$. Then the $\cup$-product
$$
H^1(k , \Z/n\Z) \otimes H^0(k , {}_nJ) \longrightarrow H^1(k , {}_nJ)
$$
is an isomorphism. We also recall that from the long exact cohomology  sequence associated to the Kummer sequence
$$
0 \to {}_nJ \longrightarrow J \stackrel{[n]}{\longrightarrow} J \to 0,
$$
(here $[n]$ is the morphism given by multiplication by $n$) one obtains that the natural map $$\theta_k \colon H^1(k ,{}_nJ) \to {}_nH^1(k , J)$$ is always surjective, with  kernel
isomorphic to $J(k)/nJ(k)$. Combining these remarks, we obtain

%Furthermore, the map $H^1(k , {}_nJ) \to {}_nH^1(k , J)$ is always. So, in the case of $k$-rational $n$-torsion, the map $\nu$ is surjective, and therefore
%$$
%{}_n\Br(k(C))_{\mathrm{ur}} = {}_n\Br(k) + \mathrm{Im}\: \mu.
%$$
%Now, from the standard exact sequence
%$$
%0 \to {}_nJ \longrightarrow J \stackrel{[n]}{\longrightarrow} J \to 0,
%$$
%we obtain that the kernel of the map $H^1(k , {}_nJ) \to H^1(k , J)$ is isomorphic to $J(k)/nJ(k)$. In particular, we obtain the following.
\begin{lemma}\label{L:Loc-Com1}
If ${}_nJ(\bar{k}) \subset J(k)$, then $\nu$ is surjective. If in addition $J(k) = nJ(k)$, then $\theta_k$, hence $\nu$, is an  isomorphism.
\end{lemma}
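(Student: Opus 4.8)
The plan is to read off both assertions directly from the two facts recorded in the paragraph immediately preceding the statement, exploiting that $\nu$ factors as the $\cup$-product pairing followed by $\theta_k$. So the lemma is essentially a bookkeeping assembly of those inputs, and I would spend my effort only on making the two factorization steps precise.

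First I would render the hypothesis ${}_nJ(\bar{k}) \subset J(k)$ at the level of Galois modules: it says exactly that $\Gamma = \Ga(\bar{k}/k)$ acts trivially on ${}_nJ$, so that ${}_nJ \cong (\Z/n\Z)^{2g}$ as a trivial $\Gamma$-module. Then $H^0(k, {}_nJ) = {}_nJ$, and fixing a $\Z/n\Z$-basis of ${}_nJ$ identifies $H^1(k, {}_nJ)$ with $H^1(k, \Z/n\Z)^{2g}$. Under these identifications the $\cup$-product $H^1(k, \Z/n\Z) \otimes H^0(k, {}_nJ) \to H^1(k, {}_nJ)$, $\chi \otimes a \mapsto \chi \cup a$, becomes the tautological isomorphism $H^1(k, \Z/n\Z) \otimes_{\Z/n\Z} {}_nJ \stackrel{\sim}{\longrightarrow} H^1(k, {}_nJ)$. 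This is precisely the isomorphism already asserted in the text, and the only point meriting a line of justification is that a trivial coefficient action collapses the cup-product to this tensor identity.

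Next I would invoke the Kummer sequence $0 \to {}_nJ \to J \stackrel{[n]}{\longrightarrow} J \to 0$: its long exact cohomology sequence shows that $\im \theta_k = \ker([n] \colon H^1(k,J) \to H^1(k,J)) = {}_nH^1(k, J)$, so $\theta_k$ is surjective onto its stated target, while $\ker \theta_k$ equals the image of the connecting map $J(k) \to H^1(k, {}_nJ)$, namely $J(k)/nJ(k)$ --- exactly as recalled in the text.

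Assembling these: for the first assertion, $\nu = \theta_k \circ (\cup)$ is a surjection composed after an isomorphism, hence surjective. For the second, the extra hypothesis $J(k) = nJ(k)$ forces $\ker \theta_k = J(k)/nJ(k) = 0$; combined with the surjectivity just noted, $\theta_k$ is an isomorphism, and therefore $\nu$, being a composite of two isomorphisms, is itself an isomorphism. I expect no genuine obstacle here: all the substance resides in the two input facts, so the only care needed is the trivial-action reduction of the $\cup$-product in the first step.
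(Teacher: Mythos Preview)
Your proposal is correct and follows exactly the paper's approach: the lemma is stated immediately after the two facts you invoke (the $\cup$-product being an isomorphism when ${}_nJ$ is $k$-rational, and the Kummer-sequence description of $\theta_k$ as surjective with kernel $J(k)/nJ(k)$), and the paper simply writes ``Combining these remarks, we obtain'' in lieu of a formal proof. Your write-up merely spells out that combination, so there is nothing to add.
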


\medskip

\noindent {\bf Remark \ref{S:Loc}.3.} Assume that the $n$-torsion of $J$ is $k$-rational. Then  $\nu$ is surjective, and consequently
\begin{equation}\label{E:51}
{}_n\Br(k(C))_{\mathrm{ur}} = {}_n\Br(k) + \mathrm{Im}\: \mu.
\end{equation}
(in fact, this is a direct sum). To describe $\mathrm{Im} \: \mu$ explicitly, we let $a_i$ for $i = 1, \ldots , 2g$ denote the preimage of the $i$th basic vector under a fixed
isomorphism ${}_nJ(k) \simeq (\Z/n\Z)^{2g}$, and let $f_i := f_{a_i}$ in our previous notations (in fact, one can take $f_i \in k(C)^{\times}$ to be
any function such that $(f_i) = n \cdot \hat{a}_i$ with $\hat{a}_i \in \mathrm{Div}^0(\bar{C})$ having the property that its image in $\mathrm{Pic}^0(\bar{C})$ corresponds
to $a_i$). Then the elements of $\mathrm{Im}\: \mu$ are represented by  tensor products of cyclic algebras of the form
$$
(\chi_1 , f^{m_1}_1) \otimes_K \cdots \otimes_K (\chi_{2g} ,  f^{m_{2g}}_{2g})
$$
for some characters $\chi_i \in H^1(k , \Z/n\Z)$ and some integers $0 \leqslant m_i < n$, with $i = 1, \ldots , 2g$. Now assume in addition that $k$ contains
all $n$th roots of unity, and fix an isomorphism $\Z/n\Z \simeq \mu_n$, which is equivalent to fixing a primitive root $\zeta_n
\in \mu_n(k)$. Then by Kummer theory, every $\chi \in H^1(k , \Z/n\Z)$ can be written in the form $\chi = \chi_a$ for some $a \in k^{\times}$ defined
uniquely modulo ${k^{\times}}^n$ where $\chi_a \colon \Gamma \to \mu_n$ is given by the formula
$$
\chi_a(\sigma) = \frac{\sigma\left( \sqrt[n]{a} \right)}{\sqrt[n]{a}}, \ \ \ \sigma \in \Gamma
$$
(this definition is independent of the choice of $\sqrt[n]{a}$). Let $m \vert n$ be the order of $\chi = \chi_a$.
Then the cyclic algebra $(\chi_a , f)$ is nothing but the symbol algebra $(a , f)_{m , \zeta^{n/m}_n}$ of degree $m$ constructed using the $m$th root of unity $\zeta^{n/m}_n$, which we will denote simply by $(a , f)$. Consequently, the elements of $\mathrm{Im} \:
\mu$ is represented by  tensor products of symbol algebras of the form
$$
(a_1 , f^{m_1}_1) \otimes_K \cdots \otimes_K (a_{2g} , f^{m_{2g}}_{2g})
$$
for some $a_1, \ldots , a_{2g} \in k^{\times}$ (with the same $m_i$  as above). Together with (\ref{E:51}), this gives an explicit description
of ${}_n\Br(k(C))_{\mathrm{ur}}$. Here is one concrete example.

Let $C$ be an elliptic curve over a (perfect) field $k$ of characteristic $\neq 2, 3$ with $k$-rational 2-torsion. Then $C$ can be given by a Weierstrass
equation $$y^2 = (x - a)(x - b)(x - c)$$ for some pairwise distinct $a, b, c \in k$. Set $K = k(C)$. In this case, the genus $g = 1$, and one can take
$f_1 = x - a$ and $f_2 = x - b$ in the above notations. So, the preceding discussion leads to the following result
\cite[Theorem 3.6]{CGu}: {\it ${}_2\Br(K)_{\mathrm{ur}} = {}_2\Br(k) + I$ where the subgroup $I$ consists of classes of  bi-quaternionic algebras of the form
$(r , x-a) \otimes_K (s , x-b)$ for some $r , s \in k^{\times}$.} (Here $(\alpha , \beta)$ stands for the quaternion algebra over $K$ corresponding to the pair $\alpha ,
\beta \in K^{\times}$.) This result was used in \cite{CRR2} to prove Theorem 1 for $K = k(C)$, where $C$ is an elliptic
curve over a number field $k$, with an explicit estimation. The argument in \cite{CRR2} can be viewed as a prototype of our proof of Theorem~1 in the general case.

%\medskip

%\noindent {\bf \ref{S:Loc}.4. Non-perfect base field.} ..

\addtocounter{thm}{2}

\medskip

\noindent {\bf \ref{S:Loc}.4. The case of a strictly henselian field.} Let $\ell$ be a field equipped with a discrete valuation $v$. Throughout
this subsection we will assume that {\it $\ell$ is henselian and the residue field $\ell^{(v)}$ is separably closed} (i.e., $\ell$ is {\it strictly
henselian}). We will later apply
the results proved here to the case where $\ell$ is the maximal unramified extension of a complete discretely valued field. Fix an integer
$n > 1$ prime to $\mathrm{char}\: \ell^{(v)}$. Let $C$ be a smooth projective curve over $\ell$ having good reduction at $v$. Thus, there exists a smooth projective curve $\mathcal{C}$ over the valuation ring $\mathcal{O}_{\ell} \subset \ell$ with generic fiber
$\mathcal{C} \times_{\mathcal{O}_{\ell}} \ell \simeq C$. We let $\underline{C}^{(v)} = \mathcal{C} \times_{\mathcal{O}_{\ell}}
\ell^{(v)}$ denote the corresponding closed fiber or reduction (assumed to be irreducible). Let $J$ be the Jacobian of $C$.
In this situation, the following properties hold, enabling us to apply our previous results.

\medskip

(i) $C(\ell) \neq \emptyset$. This immediately follows from Hensel's lemma since $\ell^{(v)}$ is separably closed and therefore
$\underline{C}^{(v)}(\ell^{(v)}) \neq \emptyset$ (cf. \cite[Theorem 3.5.50]{Poon}).

\smallskip

(ii) ${}_n\Br(\ell) = 0$ - cf., for example, \cite[Ex. 3, p. 187]{Serre-LF}. Consequently, $\omega_{\ell} \colon {}_n\Br(\ell(C))_{\mathrm{ur}} \to {}_nH^1(\ell , J)$ is an isomorphism.

\smallskip

(iii) $n \cdot J(\ell) = J(\ell)$,  hence $\theta_{\ell}$ is an isomorphism (cf. the discussion prior to Lemma \ref{L:Loc-Com1}).
Indeed, there exists an abelian scheme $\mathcal{J}$ over $\mathcal{O}_{\ell}$
with generic fiber $\mathcal{J} \times_{\mathcal{O}_{\ell}} \ell = J$ (cf. \cite[Ch. 9]{BBL}). Since $\mathcal{J}$ is proper (and separated) over $\mathrm{Spec} \: \mathcal{O}_{\ell}$, every $\ell$-point $\mathrm{Spec} \: \ell \to \mathcal{J}$ uniquely extends to an $\mathcal{O}_{\ell}$-point $\mathrm{Spec} \: \mathcal{O}_{\ell} \to \mathcal{J}$ by the valuative criterion. Thus, $$\mathcal{J}(\mathcal{O}_{\ell}) = \mathcal{J}(\ell) = J(\ell).$$ Let $\underline{J}^{(v)} = \mathcal{J}
\times_{\mathcal{O}_{\ell}} \ell^{(v)}$ denote the reduction (which is the Jacobian of $\underline{C}^{(v)}$). Since $n$ is prime to $\mathrm{char} \: \ell^{(v)}$, the
multiplication by $n$ map $[n] \colon \underline{J}^{(v)} \to \underline{J}^{(v)}$ is \'etale, so $n \cdot \underline{J}^{(v)}(\ell^{(v)}) = \underline{J}^{(v)}(\ell^{(v)})$ as $\ell^{(v)}$ is separably closed. Then by Hensel's lemma $n \cdot \mathcal{J}(\mathcal{O}_{\ell}) = \mathcal{J}(\mathcal{O}_{\ell})$, and our claim follows.

\smallskip

(iv) In the notations of (iii), the restriction  of the reduction map $\mathcal{J}(\mathcal{O}_{\ell}) \to \underline{J}^{(v)}(\ell^{(v)})$ to ${}_nJ(\ell)$
is injective. Indeed, the kernel of the reduction map has no $n$-torsion - see \cite[C.2]{HinSilv} for a proof using formal groups. For a different proof one observes that the reduction map ${}_n\mathcal{J}(\mathcal{O}_{\ell}) \to {}_n\underline{J}^{(v)}(\ell^{(v)})$ is surjective by Hensel's lemma, so it must also be injective as both have
the same order $n^{2g}$ where $g$ is the genus of $C$ (cf. \cite[Exercise C.9]{HinSilv}).

\medskip

In view of propeties (i)-(iii), Proposition \ref{P:Loc-Com1} and Lemma \ref{L:Loc-Com1} with $k = \ell$ yield the following.
\begin{prop}\label{P:Loc-Com2}
$\mu$ is a bijection.
\end{prop}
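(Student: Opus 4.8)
The plan is to deduce the bijectivity of $\mu$ from a short diagram chase, since the substantive input has already been isolated in properties (i)--(iv). Specializing the commutative triangle of Proposition \ref{P:Loc-Com1}(2) to $k = \ell$ yields the identity $\omega_\ell \circ \mu = \nu$, where $\nu$ is the cup-product pairing followed by $\theta_\ell$. Thus, once I know that both $\omega_\ell$ and $\nu$ are isomorphisms, it follows formally that $\mu = \omega_\ell^{-1} \circ \nu$ is a bijection.

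That $\omega_\ell$ is an isomorphism is recorded in property (ii) as a consequence of the vanishing ${}_n\Br(\ell) = 0$. For $\nu$, I would verify the two hypotheses of Lemma \ref{L:Loc-Com1}. The equality $J(\ell) = n J(\ell)$ is exactly property (iii), which makes $\theta_\ell$ an isomorphism. It remains to check that the $n$-torsion of $J$ is $\ell$-rational, i.e. ${}_nJ(\bar\ell) \subset J(\ell)$; this is where property (iv) is used. Since $\ell^{(v)}$ is separably closed and $n$ is prime to its characteristic, the reduction $\underline{J}^{(v)}$ carries its full complement of $n^{2g}$ torsion points over $\ell^{(v)}$, and property (iv) identifies these, via the bijective reduction map, with ${}_nJ(\ell) = {}_n\mathcal{J}(\mathcal{O}_\ell)$. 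Hence ${}_nJ(\ell)$ already has order $n^{2g} = \vert {}_nJ(\bar\ell) \vert$, so the $n$-torsion is rational and Lemma \ref{L:Loc-Com1} gives that $\nu$ is an isomorphism.

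With both $\omega_\ell$ and $\nu$ shown to be isomorphisms, the commutativity $\omega_\ell \circ \mu = \nu$ immediately forces $\mu$ to be a bijection, completing the argument. The only step needing genuine care is the rationality of the $n$-torsion: everything else is formal once properties (ii)--(iv) are available, so the real work has effectively been front-loaded into the verification of those properties, especially the injectivity of the reduction map on $n$-torsion in (iv).
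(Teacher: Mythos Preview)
Your proof is correct and follows the same route as the paper, which simply records that properties (i)--(iii) together with Proposition~\ref{P:Loc-Com1} and Lemma~\ref{L:Loc-Com1} give the result. One small remark: you invoke property (iv) to get the $\ell$-rationality of the $n$-torsion, but (iv) as stated is the \emph{injectivity} of the reduction map on ${}_nJ(\ell)$; what you actually use is the \emph{surjectivity} (Hensel's lemma applied to the \'etale map $[n]$), which is the content of the second argument given for (iv). The paper leaves this rationality implicit in the strictly henselian setup and reserves (iv) proper for the proof of Theorem~\ref{T:Inj777}, so you are being slightly more explicit than necessary---but nothing is wrong.
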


%We will now note a consequence of property (iv).

Since $C$ has good reduction at $v$, there exists a canonical (unramified) extension $\dot{v}$ of $v$ to $\ell(C)$  with residue field $\mathcal{L} := \ell^{(v)}(\underline{C}^{(v)})$. Let
$$
\rho_{\dot{v}}\colon {}_n\Br(\ell(C)) \to H^1(\mathcal{L} , \Z/n\Z)
$$
be the corresponding residue map. We will now combine the previous results with property (iv) to establish the following.
\begin{thm}\label{T:Inj777}
The restriction of $\rho_{\dot{v}}$ to ${}_n\Br(\ell(C))_{\mathrm{ur}}$ is injective.
\end{thm}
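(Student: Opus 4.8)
The plan is to exploit the explicit description of ${}_n\Br(\ell(C))_{\mathrm{ur}}$ furnished by Proposition~\ref{P:Loc-Com2}, which asserts that the pairing $\mu$ is a bijection. Since $\ell$ is strictly henselian with residue characteristic prime to $n$, we have $\mu_n \subset \ell$, and Kummer theory gives $H^1(\ell , \Z/n\Z) \simeq \ell^{\times}/{\ell^{\times}}^n \simeq \Z/n\Z$, generated by the character $\chi_0$ attached to a uniformizer $\pi$ of $\ell$. Consequently every element of $H^1(\ell , \Z/n\Z) \otimes H^0(\ell , {}_n\mathrm{Pic}^0(\bar{C}))$ is a pure tensor $\chi_0 \otimes b$ with $b \in {}_nJ(\ell)$, and Proposition~\ref{P:Loc-Com2} shows that every class in ${}_n\Br(\ell(C))_{\mathrm{ur}}$ has the form $\mu(\chi_0 \otimes b) = [(\chi_0 , f_b)]$, the symbol algebra $(\pi , f_b)$. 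Thus the asserted injectivity of $\rho_{\dot v}$ is equivalent to the implication $\rho_{\dot v}([(\chi_0 , f_b)]) = 0 \Rightarrow b = 0$.

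The core of the argument is a residue computation. First I would normalize $f_b$ so that $\dot v(f_b) = 0$: replacing $f_b$ by $\pi^{-\dot v(f_b)} f_b$ alters the class $[(\chi_0 , f_b)]$ only by the constant symbol $(\chi_0 , \pi^{-\dot v(f_b)})$, which lies in the image of ${}_n\Br(\ell) = 0$ (property (ii)) and is therefore trivial. With this normalization $f_b$ is a $\dot v$-unit, and the standard tame-symbol formula for the residue of a symbol algebra yields $\rho_{\dot v}([(\chi_0 , f_b)]) = [\bar{f}_b]^{-1}$ in $\mathcal{L}^{\times}/{\mathcal{L}^{\times}}^n \simeq H^1(\mathcal{L} , \Z/n\Z)$, where $\bar{f}_b$ denotes the reduction of $f_b$ along $\dot v$. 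The essential geometric input is that, because $C$ has good reduction, a divisor $\hat{b} \in \mathrm{Div}^0(\bar{C})(\ell)$ representing $b$ specializes to a degree-zero divisor $\overline{\hat{b}}$ on the smooth closed fiber $\underline{C}^{(v)}$ whose class in $\underline{J}^{(v)}(\ell^{(v)}) = \mathrm{Pic}^0(\underline{C}^{(v)})$ is exactly the reduction $\bar{b}$ of $b$ (via the abelian scheme $\mathcal{J}$ and the identification $\mathcal{J}(\mathcal{O}_{\ell}) = J(\ell)$ of property (iii)). Since $\mathrm{div}_C(f_b) = n\hat{b}$ and, after normalization, $f_b$ has no vertical component on the model $\mathcal{C}$, the reduced function satisfies $\mathrm{div}_{\underline{C}^{(v)}}(\bar{f}_b) = n\,\overline{\hat{b}}$.

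Granting this, injectivity is immediate. If $\rho_{\dot v}([(\chi_0 , f_b)]) = 0$, then $\bar{f}_b = w^n$ for some $w \in \mathcal{L}^{\times}$, whence $n\,\overline{\hat{b}} = n\,\mathrm{div}(w)$; since $\mathrm{Div}(\underline{C}^{(v)})$ is torsion-free, $\overline{\hat{b}} = \mathrm{div}(w)$ is principal. Therefore $\bar{b} = [\overline{\hat{b}}] = 0$ in ${}_n\underline{J}^{(v)}(\ell^{(v)})$. By property (iv), the reduction map ${}_nJ(\ell) \to {}_n\underline{J}^{(v)}(\ell^{(v)})$ is injective, so $b = 0$, and hence the class $\mu(\chi_0 \otimes b)$ vanishes.

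I expect the residue computation of the second paragraph to be the main obstacle — specifically, verifying that the tame symbol of $(\chi_0 , f_b)$ is exactly the class of the reduced function $\bar{f}_b$, and that the divisor of $\bar{f}_b$ on the closed fiber is the specialization $n\,\overline{\hat{b}}$ with $[\overline{\hat{b}}]$ equal to the reduction $\bar{b}$. This demands care in comparing the divisor of $f_b$ on the arithmetic model $\mathcal{C}$ with its restriction to the special fiber, and in matching the Kummer identification $H^1(\mathcal{L} , \Z/n\Z) \simeq \mathcal{L}^{\times}/{\mathcal{L}^{\times}}^n$ with the primitive root of unity used to define the symbol algebras. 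Once these identifications are pinned down, the remaining steps are purely formal.
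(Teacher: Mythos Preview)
Your proposal is correct and follows essentially the same approach as the paper: both arguments use Proposition~\ref{P:Loc-Com2} to write every class in ${}_n\Br(\ell(C))_{\mathrm{ur}}$ as a symbol $(\pi,f_b)$ for a unique $b\in{}_nJ(\ell)$, compute its residue via the tame-symbol formula in $\mathcal{L}^{\times}/{\mathcal{L}^{\times}}^n$, observe that triviality of the residue forces the specialized divisor $\overline{\hat b}$ to be principal so that $\bar b=0$, and then invoke property~(iv) to conclude $b=0$. The only cosmetic difference is that you normalize $f_b$ to be a $\dot v$-unit at the outset (absorbing the change into ${}_n\Br(\ell)=0$), whereas the paper carries the factor $\pi^{-\dot v(f_a)}$ through the residue formula explicitly.
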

\begin{proof}
By Hensel's lemma, the field $\ell$ contains a primitive $n$th root of unity $\zeta$, which we will fix and  use in the sequel to identify $\mu_n$
with $\Z/n\Z$ and  construct symbol algebras.
%This, in particular, provides an identification of the group $H^1(\ell , \Z/n\Z)$ of characters mod $n$
%with $H^1(\ell , \mu_n) \simeq  \ell^{\times} / {\ell^{\times}}^n$.
Note that the  group $\ell^{\times} / {\ell^{\times}}^n$ is cyclic of order $n$ with generator
$\pi {\ell^{\times}}^n$, where $\pi \in \ell$ is an arbitrary uniformizer, and  by Proposition \ref{P:Loc-Com2} the map $\mu$ is a bijection. So, the description
given in Remark 6.3 tells us that every element of ${}_n\Br(\ell(C))_{\mathrm{ur}}$ is represented by a symbol algebra $(\pi , f_a)$ of degree $n$
for a unique $a \in {}_nJ(\ell)$. As the extension $\dot{v} \vert v$ is unramified,  $\pi$ remains a uniformizer for $\dot{v}$. Let us identify
$H^1(\mathcal{L} , \Z/n\Z)$ with $H^1(\mathcal{L} , \mu_n) \simeq \mathcal{L}^{\times}/{\mathcal{L}^{\times}}^n$ using the image of $\zeta$ in $\ell^{(v)}$ (which is
still a~primitive $n$th root of $1$). We then have the following formula for the residue (cf. \cite[Example 7.1.5]{GiSz}):
\begin{equation}\label{E:Residue}
\rho_{\dot{v}}([\pi , f_a]) = (-1)^{\dot{v}(f_a)} \overline{\pi^{-\dot{v}(f_a)} f_a} (\mathcal{L}^{\times})^n \in \mathcal{L}^{\times}/{\mathcal{L}^{\times}}^n,
\end{equation}
where for a function $h \in \ell(C)$ such that $\dot{v}(h) = 0$ we let $\bar{h}$ denote the image of $h$ in $\mathcal{L}^{\times}$. Let $g := \pi^{-\dot{v}(f_a)} f_a$. Then
$(g) = (f_a) = n \cdot \hat{a}$, where $\hat{a} \in \mathrm{Div}(C)$ is a divisor whose image in $J(\ell)$ is $a$ and $(\bar{g}) = n \cdot \hat{b}$ where $\hat{b} \in \mathrm{Div}(\underline{C}^{(v)})$ is a divisor whose image in $\underline{J}^{(v)}(\ell^{(v)})$ is $b = \bar{a}$ (the image of $a$ under the reduction map).

Now, if the residue of $[\pi , f_a]$ is trivial, it follows from (\ref{E:Residue}) that $(\bar{g}) \in n \cdot \mathrm{P}(\underline{C}^{(v)})$. This means that $\hat{b} \in
\mathrm{P}(\underline{C}^{(v)})$, and therefore $b = \bar{a} = 0$. Invoking property (iv), we conclude that $a = 0$, and therefore $[\pi , f_a]$ is trivial, as required. \end{proof}

\bigskip

\section{The finiteness of $\omega_k({}_n\Br(k(C))_V)$}\label{S:FiniteA}

We begin with the following immediate consequence of Theorem \ref{T:Inj777}.
As before, let $$\theta_k \colon H^1(k , {}_nJ) \to {}_nH^1(k , J)$$ be the natural (surjective) map given by the inclusion ${}_nJ \hookrightarrow J$, and $\omega_k \colon
\Br'(k(C))_{\mathrm{ur}} \to H^1(k , J)/\Phi(C , k)$ be the map constructed in \S\ref{S:BrExactSeq}.
\begin{prop}\label{P:F-omega1}
Let $C$ be a geometrically irreducible smooth projective curve over a field $k$ having a $k$-rational point (hence the group $\Phi(C ,k)$ is trivial) .
Let $v$ be a discrete valuation of $k$ such that $\mathrm{char}\: k^{(v)}$ is prime to $n$ and
$C$ has good reduction at $v$, and denote by  $\dot{v}$  the canonical extension of $v$ to $k(C)$. If $x \in {}_n\Br(k(C))_{\mathrm{ur}}$
is unramified at $\dot{v}$, then
$$
\theta^{-1}_k(\omega_k(x)) \subset H^1(k , {}_nJ)_{\{v\}}.
$$
\end{prop}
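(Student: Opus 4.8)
The plan is to reduce the unramifiedness at $v$ of an arbitrary $y \in \theta_k^{-1}(\omega_k(x))$ to the vanishing of a single Brauer class over a strictly henselian base, where Theorem \ref{T:Inj777} applies directly. First I recall that, by the definition of unramifiedness in \S\ref{S:UnramClass}, a class $y \in H^1(k , {}_nJ)$ is unramified at $v$ precisely when its image under the restriction map $H^1(k , {}_nJ) \to H^1(\ell , {}_nJ)$ is trivial, where $\ell := k_v^{\mathrm{ur}}$ is the maximal unramified extension of the completion $k_v$ (the composite $k \hookrightarrow k_v \hookrightarrow \ell$ realizes this restriction). Since $\ell$ is strictly henselian, $C$ has good reduction at $v$, and $\mathrm{char}\: \ell^{(v)} = \mathrm{char}\: k^{(v)}$ is prime to $n$, the base change $C_{\ell} = C \times_k \ell$ satisfies all the hypotheses of \S\ref{S:Loc}.4; in particular, properties (i)--(iii) there hold for $\ell$.

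Next I would show that the image $x_{\ell}$ of $x$ under the natural map $g \colon {}_n\Br(k(C))_{\mathrm{ur}} \to {}_n\Br(\ell(C))_{\mathrm{ur}}$ of Remark 2.2(a) vanishes. Since $x_{\ell}$ lies in ${}_n\Br(\ell(C))_{\mathrm{ur}}$ by construction of $g$, Theorem \ref{T:Inj777} reduces this to proving that the residue $\rho_{\dot{v}}(x_{\ell})$ is trivial. The key point is that the extension of $\dot{v}$ from $k(C)$ to $\ell(C)$ has ramification index $e = 1$: the value group of $\dot{v}$ on $k(C)$ equals $v(k^{\times})$ because $\dot{v} \mid v$ is unramified, the analogous statement holds on $\ell(C)$ over $\ell$, and $v(\ell^{\times}) = v(k^{\times})$ since $\ell/k_v$ is unramified. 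Lemma \ref{L:RC1} then yields $\rho_{\dot{v}}(x_{\ell}) = [1] \circ \rho_{\dot{v}}(x)$, which is $0$ by the hypothesis that $x$ is unramified at $\dot{v}$. Hence $x_{\ell} = 0$ by Theorem \ref{T:Inj777}.

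Finally I would transport this vanishing through the comparison diagram of Remark 2.2(a). Both $\Phi(C , k)$ and $\Phi(C_{\ell} , \ell)$ are trivial (the first by hypothesis since $C(k) \neq \emptyset$, the second since property (i) of \S\ref{S:Loc}.4 gives $C(\ell) \neq \emptyset$), so that diagram reads $\omega_{\ell} \circ g = \mathrm{res} \circ \omega_k$, whence $\mathrm{res}(\omega_k(x)) = \omega_{\ell}(x_{\ell}) = 0$ in $H^1(\ell , J)$. Now fix $y \in \theta_k^{-1}(\omega_k(x))$, so that $\theta_k(y) = \omega_k(x)$. By the functoriality of the connecting map $\theta$ associated with the Kummer sequence under $k \hookrightarrow \ell$, one has $\theta_{\ell}(\mathrm{res}(y)) = \mathrm{res}(\theta_k(y)) = \mathrm{res}(\omega_k(x)) = 0$; and since $\theta_{\ell}$ is injective by property (iii) of \S\ref{S:Loc}.4 (its kernel $J(\ell)/nJ(\ell)$ vanishes), it follows that $\mathrm{res}(y) = 0$ in $H^1(\ell , {}_nJ)$, i.e. $y \in H^1(k , {}_nJ)_{\{v\}}$, as required. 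I expect the only delicate point to be the careful bookkeeping of the two roles played by $\dot{v}$ — a non-geometric place over both $k$ and $\ell$, but one to which Theorem \ref{T:Inj777} applies only after passage to the strictly henselian $\ell$ — together with the verification that $e = 1$, which is exactly what lets Lemma \ref{L:RC1} and Theorem \ref{T:Inj777} combine cleanly.
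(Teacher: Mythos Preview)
Your proof is correct and follows essentially the same route as the paper's: pass to $\ell = k_v^{\mathrm{ur}}$, use Theorem \ref{T:Inj777} to kill the image of $x$ in ${}_n\Br(\ell(C))_{\mathrm{ur}}$, push this through the naturality square for $\omega$, and then use the injectivity of $\theta_{\ell}$ (property (iii) of \S\ref{S:Loc}.4) to conclude. The only difference is that you spell out, via Lemma \ref{L:RC1} and the check that $e=1$, why the unramifiedness of $x$ at $\dot{v}$ on $k(C)$ forces $\rho_{\dot{v}}(x_\ell)=0$ on $\ell(C)$; the paper simply asserts this step.
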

\begin{proof}
Let $\ell = k^{\mathrm{ur}}_v$ be the maximal unramified extension of the completion $k_v$. First note that since  $C(k) \neq \emptyset$, the groups $\Phi(C , k)$ and $\Phi(C ,
\ell)$ are trivial by Lemma \ref{L:Out2}, and we have the following commutative diagram
$$
\xymatrix{{}_n\Br(k(C))_{\mathrm{ur}} \ar[d]_{\alpha} \ar[rr]^(.50){\omega_k} & & {}_nH^1(k , J) \ar[d]^{\beta} \\ {}_n\Br(\ell(C))_{\mathrm{ur}} \ar[rr]^(.50){\omega_{\ell}} & & {}_nH^1(\ell , J_{\ell})}
$$
%$$
%\begin{array}{ccc}
%{}_n\Br(k(C))_{\mathrm{ur}} & \stackrel{\omega_k}{\longrightarrow} & {}_nH^1(k , J) \\
%\alpha \downarrow & & \downarrow \beta \\
%{}_n\Br(\ell(C))_{\mathrm{ur}} & \stackrel{\omega_{\ell}}{\longrightarrow} & {}_nH^1(\ell , J_{\ell})
%\end{array}
%$$
where $J_{\ell} := J \times_k \ell$ is the Jacobian of $C_{\ell} := C \times_k \ell$, $\alpha$ is the natural map, and $\beta$ is given by restriction.
Pick any $y \in H^1(k , {}_nJ)$ such that $\theta_k(y) = \omega_k(x)$. Since $C_{\ell}$ has good reduction and $x$ is unramified at $\dot{v}$, we conclude from Theorem \ref{T:Inj777} that $\alpha(x) = 0$, and therefore $\beta(\omega_k(x)) = 0$. On the other hand, from the commutative diagram
$$
\xymatrix{H^1(k , {}_nJ) \ar[rr]^{\theta_k} \ar[d]_{\gamma} & & {}_nH^1(k , J)  \ar[d]^{\beta} \\ H^1(\ell , {}_nJ) \ar[rr]^{\theta_{\ell}} & & {}_nH^1(\ell , J)}
$$
%$$
%\begin{array}{ccc}
%H^1(k , {}_nJ) & \stackrel{\theta_k}{\longrightarrow} & {}_nH^1(k , J) \\
%\gamma \downarrow & & \downarrow \beta \\
%H^1(\ell , {}_nJ) & \stackrel{\theta_{\ell}}{\longrightarrow} & {}_nH^1(\ell , J)
%\end{array}
%$$
we see that $\theta_{\ell}(\gamma(y))= 0$. Since $\theta_{\ell}$
is an isomorphism (see \ref{S:Loc}.5, (iii)), we conclude that
$\gamma(y) = 0$ and hence $y$ is unramified at $v$ (cf. \S \ref{S:UnramClass}).
\end{proof}

\medskip

\noindent {\bf \ref{S:FiniteA}.2. A finiteness result.} Let $C$ be a  geometrically connected smooth projective curve over a field $k$, and
let $n > 1$ be an integer prime to $\mathrm{char}\: k$. Denote by $V_0$  the set of geometric places of $k(C)$. Furthermore, suppose we are given
a set $V_1$ of discrete valuations of $k$ such that for each $v \in V_1$ the characteristic of the residue field $k^{(v)}$ is prime to $n$ and $C$ has good reduction
at $v$. We then let $\dot{v}$ denote the canonical extension of $v$ to $k(C)$, and set
$$
\dot{V}_1 = \{ \dot{v} \, \vert \, v \in V_1 \} \ \ \text{and} \ \ V = V_0 \cup \dot{V}_1.
$$
Finally, fix a finite Galois extension $\ell/k$ over which $C$ has a rational point, and let $V^{\ell}_1$ denote the set of all extensions of the valuations in $V_1$ to $\ell$.
\begin{thm}\label{T:omega2}
{\rm (1)} If the group $H^1(\ell , {}_nJ)_{V^{\ell}_1}$ is finite, then so is the group  $\omega_k({}_n\Br(k(C))_V)$ and
$$
\vert \omega_k({}_n\Br(k(C))_V) \vert \ \ \ \text{divides} \ \ \  \vert H^1(\ell/k , {}_nJ(\ell)) \vert \cdot \vert \Phi(C , k) \vert  \cdot \vert H^1(\ell , {}_nJ)_{V^{\ell}_1} \vert.
$$

\smallskip

\noindent {\rm (2)} If every $v \in V_1$ is unramified in $\ell/k$ and the group $H^1(k , {}_nJ)_{V_1}$ is finite, then
$$
\vert \omega_k({}_n\Br(k(C))_V) \vert \ \ \ \text{divides} \ \ \  \vert \Phi(C , k) \vert \cdot \vert H^1(k , {}_nJ)_{V_1} \vert.
$$
\end{thm}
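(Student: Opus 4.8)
The plan is to reduce both statements to Proposition~\ref{P:F-omega1} by base changing to the finite Galois extension $\ell/k$ over which $C$ has a rational point, and then descending. Since $C(\ell)\neq\emptyset$, Lemma~\ref{L:Out2} gives $\Phi(C_\ell,\ell)=0$, so Remark 2.2 provides a commutative square
$$
h'\circ\omega_k=\omega_\ell\circ g,
$$
in which $g\colon{}_n\Br(k(C))_{\mathrm{ur}}\to{}_n\Br(\ell(C))_{\mathrm{ur}}$ is base change and $h'\colon H^1(k,J)/\Phi(C,k)\to H^1(\ell,J_\ell)$ is induced by restriction. Write $V_0^\ell$ for the geometric places of $\ell(C)$, let $\dot w$ be the canonical extension to $\ell(C)$ of $w\in V_1^\ell$, and set $V^\ell=V_0^\ell\cup\{\dot w\mid w\in V_1^\ell\}$. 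First I would verify that $g$ carries ${}_n\Br(k(C))_V$ into ${}_n\Br(\ell(C))_{V^\ell}$: base change sends geometric places to geometric places, and by the compatibility of residue maps with base change (Lemma~\ref{L:RC1}) it preserves the unramified condition at each $\dot w$ lying over some $\dot v$ with $v\in V_1$; moreover $C_\ell$ inherits good reduction at every $w\in V_1^\ell$ and $\ell^{(w)}$ has characteristic prime to $n$. Applying Proposition~\ref{P:F-omega1} over $\ell$ at each $w\in V_1^\ell$ and intersecting then yields $\theta_\ell^{-1}(\omega_\ell(g(x)))\subset H^1(\ell,{}_nJ)_{V_1^\ell}$ for every $x\in{}_n\Br(k(C))_V$; in particular the group $N:=\omega_k({}_n\Br(k(C))_V)$ satisfies $h'(N)\subset\theta_\ell\bigl(H^1(\ell,{}_nJ)_{V_1^\ell}\bigr)$, which is finite.

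For part (1) it remains to estimate $|N|=|h'(N)|\cdot|N\cap\ker h'|$. The first factor divides $|H^1(\ell,{}_nJ)_{V_1^\ell}|$. Since $N$ is $n$-torsion, $N\cap\ker h'\subset{}_n\ker h'$, and by inflation--restriction $\ker h'$ is the image in $H^1(k,J)/\Phi(C,k)$ of $I:=H^1(\ell/k,J(\ell))$ under inflation. Applying Lemma~\ref{L:RC555} to $I$ and $I\cap\Phi(C,k)$ bounds the index $[{}_n\ker h':\phi({}_nI)]$ by $|\Phi(C,k)|$, while $|{}_nI|=|{}_nH^1(\ell/k,J(\ell))|$ divides $|H^1(\ell/k,{}_nJ(\ell))|$ because the Kummer sequence for the finite group $\mathrm{Gal}(\ell/k)$ surjects $H^1(\ell/k,{}_nJ(\ell))$ onto ${}_nH^1(\ell/k,J(\ell))$. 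Multiplying these yields the asserted divisibility in (1).

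For part (2) I would instead lift over $k$ and exploit the unramifiedness of $V_1$ in $\ell/k$ to descend the unramified condition. By Lemma~\ref{L:RC555} the subgroup $\phi({}_nH^1(k,J))$ has index dividing $|\Phi(C,k)|$ in ${}_n\bigl(H^1(k,J)/\Phi(C,k)\bigr)\supseteq N$, so up to this index I may write $\omega_k(x)=\phi(b)$ with $b\in{}_nH^1(k,J)$ and choose $y\in H^1(k,{}_nJ)$ with $\theta_k(y)=b$. Writing $r_\ell\colon H^1(k,{}_nJ)\to H^1(\ell,{}_nJ)$ for restriction, one has $\theta_\ell(r_\ell(y))=\mathrm{res}(b)=h'(\omega_k(x))=\omega_\ell(g(x))$, so the first paragraph forces $r_\ell(y)\in H^1(\ell,{}_nJ)_{V_1^\ell}$. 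Now, when each $v\in V_1$ is unramified in $\ell/k$ one has $\ell_w^{\mathrm{ur}}=k_v^{\mathrm{ur}}$ for $w\mid v$, so a class over $k$ is unramified at $v$ exactly when its restriction is unramified at the places $w\mid v$; hence $y\in H^1(k,{}_nJ)_{V_1}$. Therefore $N$ is contained, up to index dividing $|\Phi(C,k)|$, in $\phi\bigl(\theta_k(H^1(k,{}_nJ)_{V_1})\bigr)$, which gives the bound in (2).

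The main obstacle is bookkeeping rather than conceptual: one must pass repeatedly between the groups $H^1(\cdot,J)$ and their $n$-torsion through both the quotient by the finite cyclic group $\Phi(C,k)$ (Lemma~\ref{L:Out2}) and the inflation--restriction sequence, and check that the correction factors emerge as the exact divisors $|\Phi(C,k)|$ and $|H^1(\ell/k,{}_nJ(\ell))|$ rather than mere inequalities. In part (2) the delicate point is the descent of the unramified condition from $\ell$ back to $k$, which is precisely where the hypothesis that every $v\in V_1$ be unramified in $\ell/k$ enters.
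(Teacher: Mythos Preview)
Your argument for part (2) is essentially the paper's own, and is correct. The issue is with part (1): the step ``the Kummer sequence for the finite group $\Ga(\ell/k)$ surjects $H^1(\ell/k,{}_nJ(\ell))$ onto ${}_nH^1(\ell/k,J(\ell))$'' is not justified, and in fact fails in general. The Kummer sequence $0\to{}_nJ\to J\stackrel{[n]}{\to}J\to0$ is exact only over the separable closure; over $\ell$ multiplication by $n$ on $J(\ell)$ need not be surjective, so the long exact sequence for $\Ga(\ell/k)$ acting on $J(\ell)$ does not yield the surjection you claim. Concretely, take $G=\Z/2\Z$ acting on $M=\Z$ by $-1$ and $n=2$: then ${}_2M=0$ so $H^1(G,{}_2M)=0$, while $H^1(G,M)\simeq\Z/2\Z$ is entirely $2$-torsion. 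Since $J(\ell)$ can have free summands with nontrivial Galois action, this obstruction is genuine, and your bound $|{}_n\ker h'|\mid|\Phi(C,k)|\cdot|H^1(\ell/k,{}_nJ(\ell))|$ is not established.

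The paper avoids this by reversing the order of your two operations: rather than first restricting $N$ along $h'$ and then analysing the $n$-torsion of $\ker h'\cong H^1(\ell/k,J(\ell))/(\cdots)$, it first lifts $N$ into $H^1(k,{}_nJ)$ via $\theta'_k:=\phi_k\circ\theta_k$, setting $X=(\theta'_k)^{-1}(N)$, and only then restricts along $\lambda\colon H^1(k,{}_nJ)\to H^1(\ell,{}_nJ)$. The kernel of $\lambda$ is \emph{exactly} $H^1(\ell/k,{}_nJ(\ell))$ by inflation--restriction for the finite module ${}_nJ$, so $|X|$ divides $|H^1(\ell/k,{}_nJ(\ell))|\cdot|\lambda(X)|$ with no further argument. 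Combined with the index bound $[N:\theta'_k(X)]\mid|\Phi(C,k)|$ (from Lemma~\ref{L:RC555} and the surjectivity of $\theta_k$, which \emph{does} use the absolute Kummer sequence) and the inclusion $\lambda(X)\subset H^1(\ell,{}_nJ)_{V_1^\ell}$ (from Proposition~\ref{P:F-omega1}), this gives the stated divisibility directly. Your proof of (2) already performs exactly this lift-then-restrict manoeuvre; doing the same in (1) closes the gap.
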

\begin{proof}
Applying Lemma \ref{L:RC555} to the canonical homomorphism $H^1(k , J) \stackrel{\phi_k}{\longrightarrow} H^1(k , J)/\Phi(C , k)$ and taking into account
the surjectivity of $\theta_k$, we see that for the composition $\theta'_k = \phi_k \circ \theta_k$, the index
$$
\left[\,{}_n\left(H^1(k , J)/\Phi(C , k) \right) \, : \, {\rm Im}\: \theta'_k  \,\right]
$$
divides $\vert \Phi(C , k) \vert$. So, if we set
$$
X = (\theta'_k)^{-1}\left(\omega_k({}_n\Br(k(C))_V)\right),
$$
then the index $[\omega_k({}_n\Br(k(C))_V) :  \theta'_k(X)]$ divides $\vert \Phi(C , k) \vert$. In particular, if $X$ is finite
then $\omega_k({}_n\Br(k(C))_V)$ is also finite, of order dividing $\vert X \vert \cdot
\vert \Phi(C , k) \vert$.

Now, consider the following commutative diagram
$$
\xymatrix{\Br'(k(C))_{\mathrm{ur}} \ar[d]_{\alpha} \ar[rr]^{\omega_k} & & H^1(k , J)/\Phi(C,k) \ar[d]^{\beta'} \\ \Br'(\ell(C))_{\mathrm{ur}} \ar[rr]^{\omega_{\ell}} & & H^1(\ell , J)/\Phi(C , \ell)}
$$
%$$
%\begin{array}{ccc}
%\Br(k(C))_{\mathrm{ur}} & \stackrel{\omega_k}{\longrightarrow} & H^1(k , J)/\Phi(C,k) \\
%\alpha \downarrow & & \downarrow \beta' \\
%\Br(\ell(C))_{\mathrm{ur}} & \stackrel{\omega_{\ell}}{\longrightarrow} & H^1(\ell , J)/\Phi(C , \ell)
%\end{array}
%$$
(note that $\Phi(C , \ell)$ is the trivial group since $C(\ell) \neq \emptyset$) and let $\lambda \colon H^1(k , {}_nJ) \to H^1(\ell , {}_nJ)$ be the restriction map. We then have the following inclusions
$$
\lambda(X) \subset \theta_{\ell}^{-1}(\omega_{\ell}(\alpha({}_n\Br(k(C))_V))) \subset H^1(\ell , {}_nJ)_{V_1^{\ell}}
$$
(observe that $\theta'_{\ell} = \theta_{\ell}$ in the above notations). The first inclusion follows from the definitions, and the second is a consequence of Proposition
\ref{P:F-omega1} as the elements of $\alpha({}_n\Br(k(C))_V)$ are unramified at $V_1^{\ell}$.
%It follows from Proposition \ref{P:F-omega1} that $\theta_{\ell}^{-1}(\alpha(\omega_{k}(\Br(k(C)_V)))) \subset H^1(\ell , {}_nJ)_{V^{\ell}_1}$.
%We need the following elementary lemma.
%\begin{lemma}\label{L:omega3}
%Let $A$ be an abelian group, $B \subset A$ be its finite subgroup, and $\phi \colon A \to A/B$ be the canonical homomorphism. Then for any integer $n \geqslant 1$,
%the index $[{}_n(A/B) : \phi({}_nA)]$ is finite and divides $\vert B \vert$. Thus, if ${}_nA$ is finite then ${}_n(A/B)$ is finite of order dividing $\vert {}_nA \vert
%\cdot \vert B \vert$.
%\end{lemma}
%\begin{proof}
%Define $\psi \colon {}_n(A/B) \to B/B^n$ by $aB \mapsto a^nB^n$. Then the sequence
%$$
%{}_nA \stackrel{\phi}{\longrightarrow} {}_n(A/B) \stackrel{\psi}{\longrightarrow} B/B^n
%$$
%is exact, and the lemma follows.
%\end{proof}
%It follows from the lemma that for the natural map $\theta'_k \colon H^1(k , {}_nJ) \stackrel{\theta_k}{\longrightarrow} H^1(k , J) \longrightarrow H^1(k , J)/\Phi(C , k)$, the %index $[{}_n(H^1(k , J)/\Phi(C,k)) : \mathrm{Im} \: \theta'_k]$ divides $\vert \Phi(C , k) \vert$. So, if we set $X = (\theta'_k)^{-1}(\omega({}_n\Br(k(C))_V))$ then %$\omega({}_n\Br(k(C))_V)$ is finite if $X$, of the order dividing $\vert X \vert \cdot \vert \Phi(C , k) \vert$.
%On the other hand, the image $\lambda(X)$ under the restriction map
%$\lambda \colon H^1(k , {}_nJ) \to H^1(\ell , {}_nJ)$ lies in $H^1(\ell , {}_nJ)_{V^{\ell}_1}$.
Since $\mathrm{Ker}\: \lambda = H^1(\ell/k , {}_nJ(\ell))$, part (1) follows immediately. If $v \in V_1$ is unramified in $\ell$ then for $w \vert v$ we have $\ell^{\mathrm{ur}}_w = k^{\mathrm{ur}}_v$. Then the fact that $\lambda(X) \subset H^1(\ell , {}_nJ)_{V^{\ell}_1}$ implies that $X \subset H^1(k , {}_nJ)_{V_1}$, yielding part (2).
\end{proof}

\bigskip

\section{Proof of Theorem 2}\label{S:PfT2}

Let $K$ be a finitely generated field, and let $n > 1$ be an integer prime to $\mathrm{char}\: K$. If $K$ is a global field then our assertion is well-known
(cf. Lemma \ref{L:RC444}).
So, we will assume in this section that $K$ is \emph{not} global, and pick for $K$ a presentation $K = k(x , y)$, where $k = P(t_1, \ldots , t_r)$ is a purely transcendental
extension of a global field $P$, with the properties listed in Proposition \ref{P:FGF1}. Let $C$ be a geometrically irreducible smooth projective $k$-defined curve with  function field $k(C) = k(x , y) = K$, and let $V_0$ be the set of geometric places of $k(C)$. Furthermore, the construction described in \S\ref{S:FinGenF}.2 yields a~set of discrete valuations $T \subset V^P$ with finite complement and the corresponding sets $V(T)$ and  $\widetilde{V(T)} = \{ \tilde{v} \: \vert \: v \in V(T) \}$ of places of $k$ and $K$, respectively. In addition, by reducing $T$ if necessary  we may assume that the ring of $S$-integers $\mathcal{O}_P(S)$ for $S = V^P \setminus T$ is a UFD. Set
$$
V = V_0 \cup \widetilde{V(T)}.
$$
Our goal is to show that the unramified Brauer group ${}_n\Br(K)_V$ is finite. As we already observed in \S \ref{S:BrExactSeq}, it is enough to check conditions (I) and (II) of  Proposition \ref{P:Out1}, i.e. the finiteness of $\iota^{-1}_k({}_n\Br(K)_V)$ and $\omega_k({}_n\Br(K)_V)$, respectively, where $\iota_k$ and $\omega_k$ are the maps from the standard exact sequence (\ref{E:Out1a}). The fact that (I) holds is established in Theorem \ref{T:FinGenF1}, so we only need to verify (II). The fact that $V(T)$ satisfies condition (A) of
\S\ref{S:Intro} implies that there exists a finite subset $S_1 \subset V(T)$ such that $C$ admits a smooth proper model $\mathscr{C} \to \mathrm{Spec}\: \mathscr{A}$ where
$$
\mathscr{A} = \bigcap_{v \in V(T) \setminus S_1} \mathcal{O}_{k , v}
$$
(cf. \cite[Prop. A.9.1.6]{HinSilv}). For $v \in V(T) \setminus S_1
$, the base change $\mathcal{C}_v := \mathscr{C} \times_{\mathscr{A}} \mathcal{O}_{k , v}$ is a smooth model over $\mathcal{O}_{k , v}$, hence defines
a canonical (unramified) extension $\dot{v}$ of $v$ to $K$.
\begin{lemma}\label{L:PfT2-1}
There exists a finite subset $S_2 \subset V(T) \setminus S_1$ such that for $v \in V(T) \setminus (S_1 \cup S_2)$ we have $\tilde{v} = \dot{v}$.
\end{lemma}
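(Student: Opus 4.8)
The plan is to prove that, for all but finitely many $v \in V(T) \setminus S_1$, both $\tilde{v}$ and $\dot{v}$ restrict on $k(x)$ to the Gauss valuation $\hat{v}$ of (\ref{E:Val1}), and then to invoke the uniqueness of the extension of $\hat{v}$ to $K$. By construction $\tilde{v}$ extends $\hat{v}$. On the other hand, by Lemma \ref{L:RC3} the reduction $F^{(v)}$ is absolutely irreducible for all $v$ outside a finite subset $W_0 \subset V(T)$, and for $v \notin W_0$ Proposition \ref{P:RC777} asserts that $\hat{v}$ admits a \emph{unique} extension to $K = k(x,y)$. Thus, setting $S_2 \supseteq W_0 \cap (V(T) \setminus S_1)$, it will suffice to show that $\dot{v}$ restricts to $\hat{v}$ as well; the equality $\dot{v} = \tilde{v}$ then follows.

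To compute $\dot{v}\vert_{k(x)}$, recall that $\dot{v} = \mathrm{ord}_{\eta_v}$, where $\eta_v$ is the generic point of the special fiber $\underline{C}^{(v)}$ of the smooth proper model $\mathcal{C}_v$, so that its residue field is $k^{(v)}(\underline{C}^{(v)})$. A residually transcendental extension $w$ of $v$ to $k(x)$ equals $\hat{v}$ precisely when $w(x) = 0$ and the image of $x$ in the residue field is transcendental over $k^{(v)}$; hence it is enough to check that $x$ reduces to a \emph{non-constant} function on $\underline{C}^{(v)}$. This I would deduce from the finiteness of the coordinate map: since $F$ is monic in $Y$, the morphism $\pi \colon C \to \PP^1_k$ is finite of degree $d = [K : k(x)]$, and the corresponding affine model $\Spec\, \mathcal{O}_{k,v}[x,y]/(F) \to \A^1_{\mathcal{O}_{k,v}}$ is finite of degree $d$; reducing modulo $v$ gives a finite morphism $\{F^{(v)} = 0\} \to \A^1_{k^{(v)}}$ of degree $d > 0$, which is therefore surjective, so $x$ is non-constant on the reduction. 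For all but finitely many $v$ the good reduction $\underline{C}^{(v)}$ is the complete nonsingular model of the plane curve $\{F^{(v)} = 0\}$, on which $x$ remains non-constant; enlarging $S_2$ by this finite set, we obtain $\dot{v}(x) = 0$ with $\bar{x}$ transcendental, that is, $\dot{v}\vert_{k(x)} = \hat{v}$.

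With $\dot{v}\vert_{k(x)} = \hat{v} = \tilde{v}\vert_{k(x)}$ and the uniqueness of the extension of $\hat{v}$, we conclude $\dot{v} = \tilde{v}$ for every $v \in V(T) \setminus (S_1 \cup S_2)$, as claimed. The step I expect to be the main obstacle is the passage from the equation model to the good-reduction model: namely, verifying for almost all $v$ that the reduction $K^{(\dot{v})} = k^{(v)}(\underline{C}^{(v)})$ really is the function field of $\{F^{(v)} = 0\}$ and that $x$ reduces there to the same non-constant element, so that the non-contraction established on the affine model transfers to $\dot{v}$. This is exactly the place where one must discard the finitely many $v$ at which the naive (equation) reduction fails to compute the canonical good reduction.
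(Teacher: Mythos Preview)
Your proof is correct and follows the same overall strategy as the paper: first use Proposition~\ref{P:RC777} and Lemma~\ref{L:RC3} to secure, for all but finitely many $v$, the uniqueness of the extension of $\hat{v}$ to $K$, and then show that $\dot{v}\vert_{k(x)} = \hat{v}$. For this second step the paper is terser: it simply observes that $\dot{v}(x) = 0$ for all $v$ outside a finite set $S''$ (the divisor of the rational function $x$ on the model $\mathscr{C}$ has only finitely many irreducible components, hence contains only finitely many special fibers) and then invokes \cite[Ch.~6, \S10, n$^{\circ}$~1, Prop.~1]{Bour} for the uniqueness of an extension of $v$ to $k(x)$ with $w(x)=0$. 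Your route through the affine equation model is sound, and your characterization of $\hat{v}$ via ``$w(x)=0$ and $\bar{x}$ transcendental over $k^{(v)}$'' is in fact the more careful formulation; but the obstacle you flag---matching the equation model to the canonical good-reduction model---can be bypassed entirely. One clean way: spread out the finite degree-$d$ morphism $C \to \PP^1_k$ determined by $x$ to a finite morphism $\mathscr{C} \to \PP^1_{\mathscr{A}}$ (enlarging $S_1$ if necessary); its base change to $k^{(v)}$ is then a finite, hence dominant, morphism $\underline{C}^{(v)} \to \PP^1_{k^{(v)}}$, so the image $\bar{x}$ of $x$ in $K^{(\dot{v})} = k^{(v)}(\underline{C}^{(v)})$ is automatically transcendental over $k^{(v)}$, and no comparison of models is needed.
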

\begin{proof}
It follows from Proposition \ref{P:RC777} and Lemma \ref{L:RC3} that there exists a finite subset $S' \subset V(T) \setminus S_1$ such that for every
$v  \in V(T) \setminus (S_1 \cup S')$, the valuation $\hat{v}$ of $k(x)$ has a \emph{unique} extension $\tilde{v}$ to $K$. On the other hand,  there exists
a finite subset $S'' \subset V(T)$ such that $\dot{v}(x) = 0$ for all $v \in V(T) \setminus (S_1 \cup S'')$. Set $S_2 = S' \cup S''$, and let $v \in V(T)
\setminus (S_1 \cup S_2)$. In view of the uniqueness of an extension of $\hat{v}$ from $k(x)$ to $K$, it is enough to show that the restriction $\dot{v} \vert
k(x)$ coincides with $\hat{v}$. But this follows from the uniqueness of an  extension $w$ of $v$ to $k(x)$ such that $w(x) = 0$ (cf. \cite[Ch. 6, \S10, n$^o$ 1, Prop.~1]{Bour}).
\end{proof}

\medskip

Let $V_1 = V(T) \setminus (S_1 \cup S_2 \cup S_3)$, where $S_3 = \{ v \in V(T) \: \vert \: v(n) \neq 0 \}$, and let $\widetilde{V}_1 = \{ \tilde{v} \: \vert \: v \in V_1 \}$
and $V' = V_0 \cup \widetilde{V}_1$. Then of course $\omega_k({}_n\Br(K)_V) \subset \omega_k({}_n\Br(K)_{V'})$. On the other hand, in view of Lemma~\ref{L:PfT2-1}
and our constructions, we can use Theorem \ref{T:omega2} to conclude that the finiteness of $\omega_k({}_n\Br(K)_{V'})$ would follow from the finiteness of
$H^1(\ell , {}_nJ)_{V_1^{\ell}}$, where $J$ is the Jacobian of $C$, for some finite Galois extension $\ell/k$ such that $C(\ell) \neq \emptyset$, with $V_1^{\ell}$ being
the set of all extensions of the valuations from $V_1$ to $\ell$. One can find a finite Galois extension $\ell$ of $k$ so that $C(\ell)
\neq \emptyset$ and the $n$-torsion of $J$ is $\ell$-rational\footnote{Recall that the existence of the Weil pairing on $J$ shows that this condition implies that $\mu_n \subset \ell$, cf., for example, \cite[Exercise A.7.8]{HinSilv}.
}. Then according to Proposition \ref{P:UR778}, to prove the finiteness of $H^1(\ell , {}_nJ)_{V_1^{\ell}}$, it is enough
to check condition $(\mathbf{F}(\ell , V_1^{\ell}))$ of \S\ref{S:UnramClass},  i.e. the finite generation of the groups $\mathrm{U}(\ell , V_1^{\ell})$ and $\mathrm{Pic}(\ell ,
V_1^{\ell})$. Let
$$
\mathscr{B} = \bigcap_{v \in V_1} \mathcal{O}_{k , v} \ \ \ \text{and} \ \ \ \mathscr{B}_{\ell} = \bigcap_{w \in V_1^{\ell}} \mathcal{O}_{\ell , w}.
$$
Then $\mathscr{B}_{\ell}$ is the integral closure of $\mathscr{B}$ in $\ell$. By our construction, the ring of $S$-integers $\mathcal{O}_P(S)$  for $S = V^P \setminus T$
is a UFD, which implies that the ring
$\mathscr{B}$ is the localization of the polynomial ring $\mathcal{O}_P(S)[t_1, \ldots , t_r]$ with respect to a multiplicative set generated by
a finite set, hence a finitely generated ring. It follows that $\mathscr{B}_{\ell}$ is a finitely generated $\mathscr{B}$-module (cf. \cite[Ch. 5, \S1, n$^o$ 6, Cor. 1]{Bour}), and consequently also a finitely generated integral domain. Then the finite generation of $\mathrm{U}(\ell , V_1^{\ell}) = \mathscr{B}^{\times}_{\ell}$ is a classical result of P.~Samuel \cite[Th\'eor\`eme 1]{Sa}. Furthermore, let $X = \mathrm{Spec}\: \mathscr{B}$ and $X_{\ell} = \mathrm{Spec}\: \mathscr{B}_{\ell}$. Clearly, $X$ is regular in codimension 1, so $X_{\ell}$, being the normalization of $X$ in $\ell$, is also regular in codimension 1. Then $\mathrm{Pic}(\ell , V_1^{\ell})$ coincides with the divisor class $\mathrm{Cl}(X_{\ell})$,  which is finitely generated by \cite[Th\'eor\`eme 1]{Kahn}. This completes the proof of Theorem 2.  \hfill $\Box$

\medskip

\addtocounter{thm}{1}

\noindent {\bf \ref{S:PfT2}.2. Explicit estimates.} The proof of Theorem 2 given above enables one to give explicit bounds on the order of the $n$-torsion of the unramified Brauer
group, hence on the size of the genus of a division algebra of degree $n$. To illustrate the method, we will develop some explicit formulas in the case $K = k(C)$ where $C$ is a geometrically irreducible smooth projective curve over a number field $k$ such that $C(k) \neq \emptyset$; numerical examples will be given in \S \ref{S:Examples}. As above, let $V_0$ be the set of geometric places of $K$. Fix a finite subset $S \subset V^k$ that contains $V_{\infty}^k$, all divisors of $n$ and also all $v \in V^k \setminus V_{\infty}^k$
where $C$ does not have good reduction. Each $v \in T:= V^k \setminus S$ canonically extends to a valuation $\dot{v}$ of $K$ (this extension is determined by the given smooth $\mathcal{O}_{k , v}$-model of $C$), and we set
$$
\dot{T} = \{ \dot{v} \: \vert \: v \in T \} \ \ \ \text{and} \ \ \ V = V_0 \cup \dot{T}.
$$
We will now estimate the order of the unramified Brauer group ${}_n\Br(K)_V$. First, by Proposition \ref{P:Out1}, the order $\vert {}_n\Br(K)_V \vert$ divides
$\vert \iota_k^{-1}({}_n\Br(K)_V) \vert \cdot \vert \omega_k({}_n\Br(K)_V) \vert$. Next, it follows from Corollary \ref{C:RC999} that the first factor $\vert
\iota_k^{-1}({}_n\Br(K)_V) \vert$ divides the number $\beta(n, k, T)$ introduced in Lemma \ref{L:RC444}, and consequently divides $(n , 2)^an^b$ where $a = \vert
V_{\mathrm{real}}^k \vert$ and $b = \vert S \setminus V_{\infty}^k \vert$. On the other hand, since $C(k) \neq \emptyset$, we can apply Theorem \ref{T:omega2} with $\ell = k$ to
conclude that $\vert \omega_k({}_n\Br(K)_V) \vert$ divides $\vert H^1(k , {}_nJ)_{T} \vert$ (we observe that Theorem \ref{T:omega2} enables one to deal also with the situation where $C(k) = \emptyset$ but then the equations become more cumbersome). To estimate the order of $H^1(k , {}_nJ)_{T}$, we need to pick a finite Galois extension $\ell/k$ that contains the $n$-torsion of $J$, hence also contains $\mu_n$. Then applying Proposition \ref{P:UR778} with $M = {}_nJ \simeq (\Z/n\Z)^{2g}$, where $g$  is the genus of $C$, we obtain from (\ref{E:UR888}) that $\vert H^1(k , {}_nJ)_T \vert$ divides
$$
\vert H^1(\ell/k , {}_nJ(\ell)) \vert \cdot  \left( \vert \mathrm{U}(\ell , T^{\ell})/\mathrm{U}(\ell , T^{\ell})^n \vert \cdot \vert {}_n\mathrm{Pic}(\ell , T^{\ell}) \vert \right)^{2g}.
$$
(We let $S^{\ell}$ (resp., $T^{\ell}$) denote the set of all extensions of valuations in $S$ (resp., $T$) to $\ell$; note that $T^{\ell} = V^{\ell} \setminus S^{\ell}$). As we indicated in \S5.5,
$$
\vert \mathrm{U}(\ell , T^{\ell})/\mathrm{U}(\ell , T^{\ell})^n \vert = (n , \vert \mu(\ell) \vert) \cdot n^{\vert S^{\ell} \vert - 1},
$$
hence divides $n^{\vert S^{\ell} \vert}$, and $\vert {}_n \mathrm{Pic}(\ell , T^{\ell}) \vert$
divides $h_{\ell}(S^{\ell})$, the class number number of the ring $\mathcal{O}_{\ell}(S^{\ell})$ of $S^{\ell}$-integers in $\ell$ (in fact, it divides $h_{\ell}(S^{\ell} , n)$
as introduced in \S\ref{S:UnramClass}.5). Finally, one can estimate the order of $H^1(\ell/k , {}_nJ(\ell))$ as indicated in \S\ref{S:UnramClass}.5. To give a really ``clean'' sample result however, let us consider the situation where one can take $\ell = k$.
\begin{thm}\label{T:ExplEst}
Keep the above notations and assume that $C(k) \neq \emptyset$ and  $J$ has $k$-rational $n$-torsion. Then the order of ${}_n\Br(K)_V$ divides
%\begin{equation}\label{E:ExplEst1}
$$
(n , 2)^a \cdot n^{b + 2g \vert S \vert} \cdot h_k(S)^{2g},
$$
%\end{equation}
and consequently divides
%\begin{equation}\label{E:ExplEst2}
$$
n^{(2g + 1)\vert S \vert} \cdot h_k(S)^{2g}.
$$
%\end{equation}
\end{thm}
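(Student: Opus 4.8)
The plan is to assemble the two finiteness inputs of Proposition \ref{P:Out1} together with the explicit counts already recorded in \S\ref{S:Constants} and \S\ref{S:UnramClass}, specialized to the favorable case $\ell = k$ permitted by the hypotheses. By Proposition \ref{P:Out1}, the order $|{}_n\Br(K)_V|$ divides the product $|\iota_k^{-1}({}_n\Br(K)_V)| \cdot |\omega_k({}_n\Br(K)_V)|$, so it suffices to bound each factor and then carry out a divisibility simplification. Throughout, the choice of $S$ (containing $V^k_\infty$, all divisors of $n$, and all bad-reduction primes) is exactly what guarantees that every $v \in T := V^k \setminus S$ has residue characteristic prime to $n$ and good reduction, so that the results invoked below apply.

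First I would handle the constants factor. Since $C(k) \neq \emptyset$ and $C$ has good reduction at every $v \in T$, Corollary \ref{C:RC999} gives $|\iota_k^{-1}({}_n\Br(K)_V)| = \beta(n,k,T)$, which by Lemma \ref{L:RC444} divides $(n,2)^a n^b$ with $a = |V^k_{\mathrm{real}}|$ and $b = |S \setminus V^k_\infty|$. Next the geometric factor: because $C(k) \neq \emptyset$, Lemma \ref{L:Out2} makes $\Phi(C,k)$ trivial, and I would invoke Theorem \ref{T:omega2}(2) with the trivial extension $\ell = k$ (so the unramifiedness hypothesis is vacuous), yielding that $|\omega_k({}_n\Br(K)_V)|$ divides $|H^1(k, {}_nJ)_T|$. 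To estimate the latter I would apply Proposition \ref{P:UR778}, again with $\ell = k$: the hypothesis that $J$ has $k$-rational $n$-torsion gives $M = {}_nJ = M(k) \cong (\Z/n\Z)^{2g}$ and, via the Weil pairing, $\mu_n \subset k$, so conditions (a), (b) hold, while $(\mathbf{F}(k,T))_n$ is guaranteed by the global-field computation of \S\ref{S:UnramClass}.5. Since $H^1(k/k, M(k)) = 0$ the leading factor in (\ref{E:UR888}) is trivial and $d = 2g$, whence $|H^1(k, {}_nJ)_T|$ divides $(|\mathrm{U}(k,T)/\mathrm{U}(k,T)^n| \cdot |{}_n\mathrm{Pic}(k,T)|)^{2g}$. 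The explicit values from \S\ref{S:UnramClass}.5 — namely that $|\mathrm{U}(k,T)/\mathrm{U}(k,T)^n|$ divides $n^{|S|}$ and $|{}_n\mathrm{Pic}(k,T)|$ divides $h_k(S)$ — then give the bound $n^{2g|S|} h_k(S)^{2g}$ for this factor.

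Multiplying the two factors produces the first claimed divisibility, $(n,2)^a n^{b+2g|S|} h_k(S)^{2g}$. The final step is a purely combinatorial simplification: since $S \supseteq V^k_\infty \supseteq V^k_{\mathrm{real}}$ one has $a + b \le |V^k_\infty| + (|S| - |V^k_\infty|) = |S|$, so $(n,2)^a n^b$ divides $n^{a+b}$, which divides $n^{|S|}$, and the coarser bound $n^{(2g+1)|S|} h_k(S)^{2g}$ follows at once. The proof is in essence an assembly of earlier machinery, so there is no deep obstacle; the one point I would double-check most carefully is the legitimacy of taking $\ell = k$ throughout. This rests precisely on the $k$-rationality of ${}_nJ$ (and the forced inclusion $\mu_n \subset k$), which collapses the inflation term $H^1(\ell/k, {}_nJ(\ell))$ to $0$ and eliminates the extra field-extension factors that would otherwise clutter both Theorem \ref{T:omega2} and Proposition \ref{P:UR778} — it is exactly this collapse that makes the ``clean'' numerical bound possible.
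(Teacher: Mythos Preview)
Your proposal is correct and follows essentially the same approach as the paper: the argument in \S\ref{S:PfT2}.2 leading up to the theorem assembles Proposition~\ref{P:Out1}, Corollary~\ref{C:RC999}/Lemma~\ref{L:RC444} for the constants factor, Theorem~\ref{T:omega2} with $\ell = k$ for the geometric factor, and Proposition~\ref{P:UR778} together with the counts of \S\ref{S:UnramClass}.5 for $H^1(k,{}_nJ)_T$, exactly as you outline. One cosmetic point: strictly speaking Corollary~\ref{C:RC999} computes $\iota_k^{-1}({}_n\Br(K)_{\dot T})$, and your equality with $\iota_k^{-1}({}_n\Br(K)_V)$ relies on the (standard) fact that classes coming from $\Br(k)$ are automatically unramified at the geometric places $V_0$; the paper sidesteps this by writing ``divides'' rather than ``equals'', but your version is fine once this observation is made explicit.
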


Combining this result with  (\ref{E:genus1}), we obtain the following.

\begin{cor}\label{C:ExplEst}
In the above notations, given a central division algebra $D$ of degree $n$ over $K$, we have the following estimation of the size of its
genus:
$$
\vert \gen(D) \vert \leqslant \varphi(n)^r \cdot n^{(2g + 1)\vert S \vert} \cdot h_k(S)^{2g},
$$
where $r$ is the number of $v \in V$ where $D$ ramifies and $\varphi$ is the Euler function.
\end{cor}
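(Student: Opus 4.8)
The plan is to obtain this estimate by directly combining the explicit bound furnished by Theorem \ref{T:ExplEst} with the reduction inequality (\ref{E:genus1}), both of which are already at our disposal. No new argument is needed; the work has been done in establishing those two ingredients, and the corollary is their formal consequence.

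First I would record that the set $V = V_0 \cup \dot{T}$ fixed in the above notations satisfies conditions (A) and (B) of \S\ref{S:Intro}, so that the reduction of \cite{CRR2} applies. Indeed, condition (B) holds because $S$ was chosen to contain every place of $k$ dividing $n$, whence each $v \in T = V^k \setminus S$ has residue characteristic prime to $n$ and the residue map $\rho_{\dot{v}}$ is defined; condition (A) holds since the geometric places $V_0$ satisfy it and adjoining the finitely-ramified canonical extensions $\dot{v}$ preserves this property. These are precisely the hypotheses under which item (ii) of the introduction guarantees that $\gen(D)$ is finite and that
$$
\vert \gen(D) \vert \leqslant \varphi(n)^r \cdot \vert {}_n\Br(K)_V \vert, \qquad r = \vert R(D) \vert,
$$
where $R(D)$ is the (finite) set of $v \in V$ at which $D$ ramifies. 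I would note that this $r$ is exactly the integer $r$ appearing in the statement of the corollary, so the two usages agree.

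Next, since we work under the standing assumptions $C(k) \neq \emptyset$ and $J$ has $k$-rational $n$-torsion, Theorem \ref{T:ExplEst} applies verbatim and yields that $\vert {}_n\Br(K)_V \vert$ divides $n^{(2g+1)\vert S \vert} \cdot h_k(S)^{2g}$; in particular $\vert {}_n\Br(K)_V \vert \leqslant n^{(2g+1)\vert S \vert} \cdot h_k(S)^{2g}$. Substituting this into the previous inequality gives
$$
\vert \gen(D) \vert \leqslant \varphi(n)^r \cdot n^{(2g+1)\vert S \vert} \cdot h_k(S)^{2g},
$$
as required. I do not expect any real obstacle here: the entire substance resides in Theorem \ref{T:ExplEst} and in the reduction (\ref{E:genus1}), and the only points demanding (routine) care are the verification that $V$ meets conditions (A) and (B)—needed to invoke (\ref{E:genus1})—and the matching of the quantity $r$ across the two statements, both of which are immediate from the construction of $V$.
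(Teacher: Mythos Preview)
Your proposal is correct and follows exactly the approach indicated in the paper: the corollary is obtained by substituting the bound on $\vert {}_n\Br(K)_V \vert$ from Theorem \ref{T:ExplEst} into the reduction inequality (\ref{E:genus1}). Your added verification that $V$ satisfies conditions (A) and (B) is a reasonable point of care, though the paper treats this as already established by the construction of $V$.
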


\medskip

\section{Examples}\label{S:Examples}

\noindent {\bf \ref{S:Examples}.1. Elliptic curves and $n = p$ a prime.} In this subsection, we will deviate from our standard notation $C$ for a given curve,
and will use $E$ to denote an elliptic curve over a number field $k$. Mimicking the construction described in \S\ref{S:PfT2}.2, we pick a finite subset $S \subset V^k$
that contains $V_{\infty}^k$, all extensions of the $p$-adic valuation of $\Q$, and all places of bad reduction -- these can be easily determined from the
Weierstrass equation of $E$. Then each $v \in T := V^k \setminus S$ extends canonically to a valuation $\dot{v}$ on  $K = k(E)$, and we set
$$
\dot{T} = \{ \dot{v} \: \vert \: v \in  T \} \ \ \ \text{and} \ \ \ V = V_0 \cup \dot{T},
$$
where $V_0$ is the set of geometric places of $K$. Again, $\vert {}_p\Br(K)_V \vert$ divides $\vert \iota^{-1}_k({}_p\Br(K)_V) \vert \cdot \vert \omega_k({}_p\Br(K)_V) \vert$
in our standard notations. As in \S\ref{S:PfT2}.2, the first factor divides $(p , 2)^a \cdot p^b$ where $a = \vert V_{\mathrm{real}}^k \vert$ and $b = \vert S \setminus V_{\infty}^k \vert$. Let $\ell$ be the (Galois) extension of $k$ generated by the coordinates of elements of order $p$ in $J = E$. Then as in \S\ref{S:PfT2}.2, the second factor
divides
\begin{equation}\label{E:Examples1}
\vert H^1(\ell/k , {}_pE) \vert \cdot \vert \mathrm{U}(\ell , T^{\ell})/\mathrm{U}(\ell , T^{\ell})^p  \vert^2 \cdot \vert {}_p\mathrm{Pic}(\ell , T^{\ell}) \vert^2,
\end{equation}
where $T^{\ell} = V^{\ell} \setminus S^{\ell}$ and $S^{\ell}$ consists of all extensions of the valuations in $S$ to $\ell$.  We know from \S\ref{S:PfT2}.2 that the product of the second and the third factors in (\ref{E:Examples1}) divides $p^{2\vert S^{\ell} \vert} \cdot h_{\ell}(S^{\ell})^2$ where $h_{\ell}(S^{\ell})$ is the class number of the ring
$\mathcal{O}_{\ell}(S^{\ell})$ of $S^{\ell}$-integers in $\ell$. To estimate the first factor, we need the following simple computation, where we write $\mathbb{F}_p$ for $\Z/p\Z$.
\begin{lemma}\label{L:p-coh}
Let $M = \mathbb{F}_p^2$, and let $G$ be any subgroup of $\mathrm{Aut}(M) = \mathrm{GL}_2(\mathbb{F}_p)$. Then $H^1(G , M)$ has order
either $1$ or $p$; the order is $1$ if $p = 2$.
\end{lemma}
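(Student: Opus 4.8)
The plan is to reduce the computation to a Sylow $p$-subgroup of $G$ and then carry out an explicit calculation for a cyclic group of order $p$. First I would invoke the standard fact that, for $i \geqslant 1$, the group $H^1(G, M)$ is annihilated by the exponent of $M$, which here equals $p$; hence $H^1(G, M)$ is an elementary abelian $p$-group. Let $P$ be a Sylow $p$-subgroup of $G$. Since $[G:P]$ is prime to $p$ and the composite $\mathrm{cor} \circ \mathrm{res}$ equals multiplication by $[G:P]$, which is invertible on the $\mathbb{F}_p$-vector space $H^1(G, M)$, the restriction map $H^1(G, M) \to H^1(P, M)$ is injective. Thus it suffices to bound $\vert H^1(P, M) \vert$.

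Next I would pin down $P$ explicitly. Because $\vert \mathrm{GL}_2(\mathbb{F}_p) \vert = p(p-1)^2(p+1)$, the prime $p$ divides the order exactly once, so $P$ is either trivial---in which case $H^1(G, M) = 0$ and there is nothing more to prove---or cyclic of order $p$. In the latter case $P$ is generated by a nontrivial unipotent element, which after conjugation (harmless for the cohomology) we may take to be $u = \begin{pmatrix} 1 & 1 \\ 0 & 1 \end{pmatrix}$, acting on $M$ with basis $e_1, e_2$ by $u e_1 = e_1$ and $u e_2 = e_1 + e_2$.

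The heart of the argument is the cohomology of this cyclic group. For $P = \langle u \rangle$ of order $p$ one has $H^1(P, M) = \ker N / \mathrm{im}(u - 1)$, where $N = 1 + u + \cdots + u^{p-1}$ is the norm. A direct computation from $u^k e_2 = k e_1 + e_2$ gives $N e_1 = p e_1 = 0$ and $N e_2 = \frac{p(p-1)}{2}\, e_1$, while $\mathrm{im}(u-1) = \mathbb{F}_p e_1$ is of order $p$. For $p$ odd we have $\frac{p(p-1)}{2} \equiv 0 \pmod{p}$, so $N = 0$, whence $\ker N = M$ and $\vert H^1(P, M) \vert = p^2/p = p$. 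For $p = 2$, by contrast, $N e_2 = e_1 \neq 0$, so $\ker N = \mathbb{F}_2 e_1 = \mathrm{im}(u-1)$ and $H^1(P, M) = 0$. Combining these with the injection of the first step shows that $\vert H^1(G, M) \vert$ divides $p$ in every case and is trivial when $p = 2$, exactly as claimed.

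I expect the reduction steps (annihilation by $p$, restriction to the Sylow subgroup, and the conjugacy classification of order-$p$ elements of $\mathrm{GL}_2$) to be entirely routine, so the only point demanding genuine care is the behaviour at $p = 2$: the coefficient $\frac{p(p-1)}{2}$ is precisely what separates the two cases, and one must verify that the norm is genuinely nonzero there so that $\ker N$ collapses onto $\mathrm{im}(u-1)$ and the cohomology vanishes.
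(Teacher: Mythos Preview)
Your proof is correct and follows essentially the same route as the paper: reduce to a Sylow $p$-subgroup via injectivity of restriction, identify that subgroup (when nontrivial) with the cyclic group generated by the standard unipotent $u$, and compute $H^1$ for a cyclic group as $\ker N/(u-1)M$. Your explicit identification of the coefficient $\tfrac{p(p-1)}{2}$ as the reason the cases $p=2$ and $p$ odd diverge is a nice touch that the paper leaves implicit.
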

\begin{proof}
Let $G_p$ be a Sylow $p$-subgroup of $G$. Since $pM = 0$, the restriction $H^1(G , M) \to H^1(G_p , M)$ is injective. If $G_p$ is trivial then there is
nothing to prove; otherwise $G_p$ is conjugate in $\mathrm{GL}_2(\mathbb{F}_p)$ to $U = \langle u \rangle$ where $u = \left( \begin{array}{cc} 1 & 1 \\ 0 & 1 \end{array} \right)$,
and it is enough to compute $H^1(U , M)$. Since $U$ is cyclic, $H^1(U , M) \simeq \mathrm{Ker}\: N / (u - 1)M$, where $N = 1 + u + \cdots + u^{p-1} \in \mathrm{End}(M)$ is the norm map. A direct computation shows that $N = 0$ if $p > 2$, and is given by the matrix $\left(\begin{array}{cc} 0 & 1 \\ 0 & 0 \end{array} \right)$ if $p = 2$. Thus, $\mathrm{Ker}\: N$ is $\mathbb{F}_p$ in the first case, and $\mathbb{F}_p e_1$ in the second (here $e_1 , e_2$ is the standard basis of $\mathbb{F}_p^2$). On the other hand, $(u - 1)M = \mathbb{F}_p e_1$ in all cases. We see that $\vert H^1(U , M) \vert = p$ if $p > 2$, and is $1$ if $p = 2$, so our assertion follows.
\end{proof}
In our situation, ${}_pE \simeq \mathbb{F}^2_p$ and $\Ga(\ell/k)$ embeds in $\mathrm{GL}_2(\mathbb{F}_p)$, so the lemma applies. Putting everything together, we obtain the following.
\begin{prop}\label{P:Examples1}
The order of ${}_p\Br(K)_V$ divides
$$
\left\{\begin{array}{lcl} 2^{a + b + 2 \vert S^{\ell} \vert} \cdot h_{\ell}(S^{\ell})^2 & \text{if} & p = 2, \\
p^{1 + b + 2\vert S^{\ell} \vert} \cdot h_{\ell}(S^{\ell})^2 & \text{if} & p > 2. \end{array}   \right.
$$
\end{prop}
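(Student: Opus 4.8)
The plan is to assemble the estimates established earlier, since the discussion preceding the statement has already expressed the order of ${}_p\Br(K)_V$ as a product of controlled factors. First I would invoke Proposition \ref{P:Out1} to conclude that $\vert {}_p\Br(K)_V \vert$ divides $\vert \iota^{-1}_k({}_p\Br(K)_V) \vert \cdot \vert \omega_k({}_p\Br(K)_V) \vert$, and then bound each factor separately, using throughout that $E(k) \neq \emptyset$ since the origin is a $k$-rational point.

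For the first factor, Corollary \ref{C:RC999} gives that $\vert \iota^{-1}_k({}_p\Br(K)_V) \vert$ divides the quantity $\beta(p, k, T)$ of Lemma \ref{L:RC444}, which in turn divides $(p,2)^a p^b$ with $a = \vert V^k_{\mathrm{real}} \vert$ and $b = \vert S \setminus V^k_{\infty} \vert$; thus this factor contributes $2^{a+b}$ when $p = 2$ and $p^b$ when $p > 2$. For the second factor, the vanishing $\Phi(E, k) = 0$ (Lemma \ref{L:Out2}) lets me apply Theorem \ref{T:omega2} with $\ell = k$ to see that $\vert \omega_k({}_p\Br(K)_V) \vert$ divides $\vert H^1(k, {}_pE)_T \vert$. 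I would bound the latter by applying Proposition \ref{P:UR778} with $M = {}_pE \simeq \mathbb{F}_p^2$ (so that $d = 2$) over the field $\ell$ generated by the coordinates of the $p$-torsion, which yields from (\ref{E:UR888}) the three-factor bound (\ref{E:Examples1}). The arithmetic recalled in \S\ref{S:UnramClass}.5 then shows $\vert \mathrm{U}(\ell, T^\ell)/\mathrm{U}(\ell, T^\ell)^p \vert$ divides $p^{\vert S^\ell \vert}$ and $\vert {}_p\mathrm{Pic}(\ell, T^\ell) \vert$ divides $h_\ell(S^\ell)$, so the product of the second and third factors in (\ref{E:Examples1}) divides $p^{2\vert S^\ell \vert} h_\ell(S^\ell)^2$.

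The only step genuinely specific to this proposition is the control of the inflation term $\vert H^1(\ell/k, {}_pE) \vert$. Here I would note that $\Ga(\ell/k)$ embeds into $\mathrm{Aut}({}_pE) = \mathrm{GL}_2(\mathbb{F}_p)$, so Lemma \ref{L:p-coh} applies directly and gives that this term is either $1$ or $p$, and is $1$ whenever $p = 2$. Multiplying the two factors and separating the cases then produces the claim: when $p = 2$ the Brauer contribution $(2,2)^a = 2^a$ survives while the inflation term vanishes, giving $2^{a+b+2\vert S^\ell \vert} h_\ell(S^\ell)^2$; when $p > 2$ the factor $(p,2)^a$ disappears but the inflation term may contribute an extra $p$, giving $p^{1+b+2\vert S^\ell \vert} h_\ell(S^\ell)^2$. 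I do not anticipate a real obstacle, as the argument is purely a matter of bookkeeping together with the correct handling of the dichotomy at $p = 2$; the single genuine input, Lemma \ref{L:p-coh}, has already been proved.
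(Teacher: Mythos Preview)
Your proposal is correct and follows essentially the same route as the paper: it invokes Proposition~\ref{P:Out1}, bounds the first factor via Corollary~\ref{C:RC999}/Lemma~\ref{L:RC444}, bounds the second factor by combining Theorem~\ref{T:omega2} (with $\ell=k$, using $E(k)\neq\emptyset$) and Proposition~\ref{P:UR778} to reach (\ref{E:Examples1}), and then applies Lemma~\ref{L:p-coh} to control the inflation term. The case split at $p=2$ versus $p>2$ is handled exactly as in the paper.
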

(Note that $a + b = \vert S \vert - c$, where $c$ is the number of complex places of $k$.)

\medskip

Here is one explicit example for $k = \Q$ and $p = 3$.

\noindent {\sc Example \ref{S:Examples}.3.} In \cite[Theorem 2.2]{Palad}, L.~Paladino constructed a 2-parameter family $\mathcal{F}_{\beta , h}$ $(\beta , h \in \Q^{\times})$ of elliptic curves over $\Q$ having a Weierstrass equation of the form $y^2 = x^3 + b_{\beta , h} x + c_{\beta , h}$ with
$$
b_{\beta , h} = -27 \frac{\beta^4}{h^4} + 18 \frac{\beta^3}{h^2} - 9 \frac{\beta^2}{2} + 3 \frac{\beta h^2}{2} - 3 \frac{h^4}{16}
 \ \ \ \text{and} \ \ \ c_{\beta , h} = 54 \frac{\beta^6}{h^6} - 54 \frac{\beta^5}{h^4} + 45 \frac{\beta^4}{2 h^2} - 15 \frac{\beta^2 h^2}{8} - 3 \frac{\beta h^4}{8} -
 \frac{1}{32h^6},
$$
and  discriminant
$$
\Delta = - \frac{216\beta^3(h^4 - 6\beta^2h^2 + 12\beta^3)}{h^6}.
$$
The family $\mathcal{F}_{\beta , h}$ contains infinitely many non-isomorphic curves, and for any $E \in \mathcal{F}_{\beta , h}$ the field $\ell$ generated by the 3-torsion
of $E$ is $\Q(\zeta_3)$ where $\zeta_3$ is a primitive 3rd root of unity, hence has class number one. It follows now from Proposition \ref{P:Examples1}  that for any choice of $T \subset V^{\Q}$ as above, the order of ${}_3\Br(\Q(E))_V$ divides $3^{\vert S \vert + 2\vert S^{\ell} \vert}$.

In particular, taking $\beta = h = 1$ yields the curve $E$ with the Weierstrass equation
$$
y^2 = x^3 - \frac{195}{16} x + \frac{647}{32}
$$
and discriminant $\Delta =  - 1512 = - 2^3 \cdot 3^3$. It follows that one can take $S = \{\infty, 2, 3 \}$. Every prime in $S$ remains prime in $\ell$, so
$\vert S^{\ell} \vert = 3$. Thus, the order of ${}_3\Br(\Q(E))_V$ divides $3^9$. Combining this with (\ref{E:genus1}), we obtain the following bound on the size
of the genus of a central cubic division algebra $D$ over $K = \Q(E)$:
$$
\vert \gen(D) \vert \leqslant 2^r \cdot 3^9,
$$
where $r$ is the number of $v \in V$ where $D$ ramifies.

We note that the structure of the field generated by the 3-torsion is described explicitly in \cite{BanPal} for all elliptic curves over $\Q$.

\medskip

\noindent {\bf \ref{S:Examples}.4. Hyperelliptic curves and $n = 2$.} Let $C$ be a hyperelliptic curve over a number field $k$, i.e. a smooth projective geometrically connected
curve of genus $g \geqslant 1$ admitting a finite $k$-morphism $C \to \mathbb{P}^1$ of degree 2. It is well-known that $C$ contains  the  affine plane curve given by an equation of the form $y^2 = f(x)$ where $f(x) \in k[x]$ is a separable polynomial of degree $m \geqslant 3$. Then $g$ and $m$ are related by the formula
$$
g = \left\{ \begin{array}{ll}  (m - 1)/2, & m \ \ \text{even}, \\ (m-2)/2, & m \ \ \text{odd},   \end{array}   \right.
$$
(cf. \cite[Prop. 7.4.24]{Liu}).
Pick a finite set of places $S \subset V^k$ as above. Let us first consider the case where $C$ is \emph{split}, i.e. all roots of $f$ are in $k$. Then the Jacobian $J$ of  $C$ has $k$-rational 2-torsion (cf. \cite[Proposition 3]{YM}), so Theorem \ref{T:ExplEst} yields the following.
\begin{prop}\label{P:Examples2}
Let $C$ be a split hyperelliptic curve of genus $g$ over a number field $k$. Then in the above notations, the order of ${}_2\Br(k(C))_V$ divides $2^{a + b + 2g\vert S \vert} \cdot
h_k(S)^{2g}$ where $h_k(S)$ is the class number of the ring $\mathcal{O}_k(S)$ of $S$-integers in $k$; in particular, it divides $2^{(2g+1)\vert S \vert} \cdot h_k(S)^{2g}$.
\end{prop}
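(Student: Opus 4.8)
The plan is to obtain the proposition as the immediate specialization of Theorem~\ref{T:ExplEst} to $n = 2$. That theorem has exactly two standing hypotheses---namely that $C(k) \neq \emptyset$ and that the Jacobian $J$ carries $k$-rational $n$-torsion---so the whole argument reduces to checking these two conditions for a \emph{split} hyperelliptic curve and then reading off the resulting divisibility bound with $n = 2$.

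First I would verify $C(k) \neq \emptyset$. By hypothesis $C$ contains the affine curve $y^2 = f(x)$ with $f \in k[x]$ separable of degree $m \geqslant 3$, and ``split'' means $f$ splits completely over $k$. Each root $a \in k$ of $f$ then yields a $k$-rational Weierstrass point $(a, 0) \in C(k)$---these being precisely the ramification points of the degree-two map $C \to \PP^1$---so in particular $C(k) \neq \emptyset$.

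Next I would check that the $2$-torsion ${}_2J$ is $k$-rational, which is exactly the content of the cited \cite[Proposition 3]{YM}. The point is that ${}_2J(\bar{k})$ is generated by the classes of differences of Weierstrass points, and since in the split case all Weierstrass points lie in $C(k)$, each such class lies in $J(k)$; hence ${}_2J(\bar{k}) \subset J(k)$, as required by Theorem~\ref{T:ExplEst}.

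With both hypotheses in hand, I would apply Theorem~\ref{T:ExplEst} with $n = 2$. Since $(n, 2) = (2, 2) = 2$, the first bound $(n,2)^a \cdot n^{b + 2g\vert S \vert} \cdot h_k(S)^{2g}$ collapses to $2^{a + b + 2g\vert S \vert} \cdot h_k(S)^{2g}$, giving the first assertion; the ``in particular'' clause then follows from the inequality $a + b \leqslant \vert S \vert$ (note $V^k_{\mathrm{real}} \subset V^k_{\infty} \subset S$, so $a \leqslant \vert V^k_{\infty} \vert$ while $b = \vert S \vert - \vert V^k_{\infty} \vert$), exactly mirroring the corresponding deduction inside Theorem~\ref{T:ExplEst}. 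The only non-formal ingredient is the rationality of the $2$-torsion, and this is entirely standard for split hyperelliptic curves; I therefore expect no genuine obstacle, the argument being pure bookkeeping once the two hypotheses of Theorem~\ref{T:ExplEst} have been confirmed.
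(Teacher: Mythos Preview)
Your proposal is correct and follows exactly the paper's approach: the paper simply notes that the split hypothesis gives $k$-rational $2$-torsion (citing \cite[Proposition 3]{YM}) and then states that Theorem~\ref{T:ExplEst} yields the result. You are in fact slightly more careful than the paper, since you explicitly verify the hypothesis $C(k) \neq \emptyset$ via the rational Weierstrass points, a point the paper leaves implicit.
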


In the general case, we let $\ell$ the splitting field of the polynomial $f$. Then the 2-torsion of $J$ is $\ell$-rational. As we have seen in \S\ref{S:PfT2}.2, explicit estimations of the order of ${}_2\Br(k(C))_V$ depend on size of $H^1(\ell/k , {}_2J(\ell))$. Let us consider the ``generic'' situation where the Galois group $\Ga(\ell/k)$ is $S_m$. Since the latter is 2-generated, the order of $Z^1(\ell/k , {}_2J(\ell))$ divides $\vert {}_2J(\ell) \vert^2 = 2^{4g}$. So, the method described  in \S\ref{S:PfT2}.2 shows that the order of ${}_2\Br(k(C))_V$ in the ``generic'' case divides $2^{2g(\vert S^{\ell} \vert + 2)} \cdot h_{\ell}(S^{\ell})^{2g}$, where $h_{\ell}(S^{\ell})$ is the class number of the ring $\mathcal{O}_{\ell}(S^{\ell})$ of $S^{\ell}$-integers in $\ell$.

We note that the torsion subgroups in the Jacobians of hyperelliptic curves over $\Q$ were extensively analyzed by V.P.~Platonov and his collaborators, cf. \cite{Plat}.

\vskip5mm

\section{{\sc Appendix : Finiteness of the unramified Brauer group via \'etale cohomology}}\label{S:Appendix}

In this section, we outline an alternative approach to Theorem 2 that was suggested to us by J.-L. Colliot-Th\'el\`ene \cite{CT}.
%for establishing the finiteness of the $n$-torsion of the unramified Brauer group of a finitely generated field.
For the reader's convenience, we begin by recalling the required input from \'etale cohomology theory. Given a scheme $X$, we will denote by ${\rm Sh}(\Xet)$ the category of abelian sheaves on the \'etale site $\Xet.$ Also, we let $\mathbb{G}_{m, X}$ and $\mu_{n,X}$ (or simply $\mathbb{G}_m$ and $\mu_n$ if there is no confusion) denote, respectively, the multiplicative group scheme and group scheme of $n$th roots of unity on $X$, as well as the corresponding \'etale sheaves.
%for an integer $n > 1$, we let $\mu_{n,X}$ (or simply $\mu_n$ if there is no confusion) denote the group scheme of $n$th roots of unity on $X$ as well as the corresponding \'etale sheaf.
Finally, we will use the standard notations for the Tate twists of $\mu_n.$ Namely, if $j > 0$, then $\mu_n^{\otimes j} = \mu_n \otimes \cdots \otimes \mu_n$ is the \'etale sheaf given by the tensor product of $j$ copies of $\mu_n.$ We set $\mu_n^0$ to be the constant sheaf $(\Z/ n \Z)_X$. For $j < 0$, we define
$$
\mu_n^{\otimes j} = \Hom (\mu_n^{\otimes(-j)}, (\Z/ n \Z)_X).
$$

The first result is a finiteness theorem for the higher direct images of constructible sheaves.\footnotemark \footnotetext{We recall that for a scheme $X$, a sheaf $\mathfrak{F} \in {\rm Sh}(\Xet)$ is said to be {\it constructible} if each affine open $U \subset X$ has a decomposition into finitely many constructible reduced subschemes $U_i$ of $U$ such that for all $i$, the restriction $\mathfrak{F} \vert_{U_i}$ is locally constant and has finite stalks at all geometric points.} In the statement below, we let $T$ denote a noetherian scheme.

\begin{thm}\label{T-Constructible}
Suppose $T$ is regular of dimension 0 or 1 and $n$ is an integer that is invertible on $T$. Let $X$ and $Y$ be $T$-schemes and $f \colon X \to Y$ be a $T$-morphism of finite type. If $\mathfrak{F} \in {\rm Sh}(X_{{\rm \acute{e}t}})$ is an $n$-torsion constructible sheaf, then the sheaves $R^if_* \mathfrak{F}$ are constructible for all $i \geq 0$.
\end{thm}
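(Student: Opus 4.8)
The plan is to recognize this as a form of Deligne's general finiteness theorem for constructible torsion sheaves (the content of ``Th\'eor\`emes de finitude'' in \cite{Del}) and to organize the argument around the standard d\'evissage that reduces an arbitrary finite-type morphism to two basic cases: proper morphisms and open immersions. First I would reduce to a noetherian setting. Since constructibility of $R^i f_* \mathfrak{F}$ is local on $Y$ and is compatible with filtered inverse limits of the base, a passage to the limit (writing an arbitrary $T$-scheme $Y$ as a limit of $T$-schemes of finite type, and descending the finite-type morphism $f$ and the constructible $n$-torsion sheaf $\mathfrak{F}$ to some finite level) lets me assume that $Y$ is affine and of finite type over $T$, hence noetherian of finite Krull dimension, and that $f$ is a finite-type morphism of noetherian schemes. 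Performing noetherian induction on $Y$, it then suffices to prove constructibility after restriction to a dense open subscheme of the reduction of each irreducible component of $Y$; moreover, by the usual d\'evissage of constructible sheaves into pushforwards of locally constant sheaves along locally closed immersions, one may assume $\mathfrak{F}$ is of a standard form.

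Second, I would factor $f$. By Nagata's compactification theorem, $f = \bar f \circ j$, where $j \colon X \hookrightarrow \bar{X}$ is an open immersion into a $Y$-scheme $\bar{X}$ and $\bar f \colon \bar{X} \to Y$ is proper. The Leray spectral sequence $R^p \bar f_* R^q j_* \mathfrak{F} \Rightarrow R^{p+q} f_* \mathfrak{F}$ reduces the problem to two statements: (i) $R^q j_* \mathfrak{F}$ is constructible for the open immersion $j$, and (ii) $R^p g_* \mathfrak{G}$ is constructible for a proper morphism $g$ and a constructible $n$-torsion sheaf $\mathfrak{G}$. Case (ii) is exactly the finiteness theorem for proper morphisms contained in the proper base change package in \'etale cohomology, which is valid over any noetherian base and requires no invertibility hypothesis; I would simply invoke it.

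Third --- and this is where the hypotheses on $T$ and on $n$ become essential --- I would treat the open immersion $j \colon X \hookrightarrow \bar{X}$. Constructibility of $R^q j_* \mathfrak{F}$ is local on $\bar{X}$ and can be verified by computing the stalks at geometric points $\bar{x}$, which are the \'etale cohomology groups $H^q(X_{(\bar{x})}, \mathfrak{F})$, where $X_{(\bar{x})}$ is the preimage of $X$ in the strict henselization of $\bar{X}$ at $\bar{x}$ (a strictly local scheme with a closed subset removed). The heart of the matter is to show that these stalks are finite, vanish in a uniform range, and vary constructibly with $\bar{x}$. This is precisely the point at which one uses that $n$ is invertible on $T$ and that $\dim T \le 1$: invertibility of $n$ makes the Kummer theory and the cohomology of $\mu_n$ well behaved and permits the use of Gabber's absolute purity theorem \cite{Fuj} to reduce the local cohomology of the strictly local schemes to computations on regular closed subschemes; the bound on $\dim T$ keeps the relevant fibre dimensions, and hence the cohomological dimensions, under control so that the groups $H^q(X_{(\bar{x})}, \mathfrak{F})$ are finite and eventually vanish. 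I expect this local analysis of $R^q j_*$ --- establishing finiteness of the stalks together with their constructible variation --- to be the main obstacle, since it is the step that genuinely invokes purity and on which the failure of the theorem over higher-dimensional or otherwise bad bases would hinge.
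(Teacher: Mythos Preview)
The paper does not prove this theorem at all: immediately after stating it, the authors write ``This is Theorem 1.1 in Deligne's article [\emph{Th\'eor\`emes de finitude}, SGA~$4\frac12$]; the proof is given in \S\S2--3 therein.'' So there is nothing in the paper to compare your proposal against beyond that citation, and your sketch should be read as an attempt to summarize Deligne's argument.

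Your broad d\'evissage structure --- reduce to noetherian, compactify via Nagata, split via Leray into a proper pushforward and an open immersion, handle the proper part by proper base change --- is the standard modern packaging and is fine in outline. The problem is your treatment of the open-immersion step. You propose to control the stalks of $R^q j_*\mathfrak{F}$ by invoking Gabber's absolute purity \cite{Fuj}. This is doubly problematic. First, it is anachronistic: Deligne's proof is from 1977, while Gabber's absolute purity came roughly two decades later, so this cannot be the mechanism Deligne uses. Second, and more seriously, absolute purity applies to a closed immersion of \emph{regular} noetherian schemes, whereas your compactification $\bar X$ is an arbitrary finite-type $T$-scheme and its strict henselizations have no reason to be regular. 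So as written the purity input does not apply, and you have not explained how to reduce to a regular situation.

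Deligne's actual approach to the open-immersion step is different: he fibres by curves, reducing to the case of a morphism of relative dimension $\le 1$, and then analyzes an open immersion $j\colon U\hookrightarrow X$ with $X$ a relative curve over a (strictly) local base. In that setting the complement $X\setminus U$ is quasi-finite, and the computation of $R^q j_*$ becomes tractable by hand (this is where invertibility of $n$ and $\dim T\le 1$ genuinely enter). The key intermediate result is a generic constructibility/base-change statement for such relative curves, from which the general case follows by induction on fibre dimension. If you want your outline to match Deligne's, replace the purity appeal by this fibration-by-curves reduction.
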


\noindent (This is Theorem 1.1 in Deligne's article \cite[``Th\'eor\`emes de finitude"]{Del}; the proof is given in \S\S 2-3 therein.)

%We refer the reader to \cite{Del}, Ch. Th\'eor\`emes de finitude, Th\'eore\`eme 1.1, for the statement, and \S\S 2-3 for the proof.)

\vskip2mm

\noindent We will now use Theorem \ref{T-Constructible} to derive the following statement, which is one of the key ingredients of the cohomological proof of Theorem 2.

\begin{thm}\label{T:CT}
Let $X$ be a smooth integral scheme of finite type over $T = \Spec(A)$,
where $A$ is either a finite field or the ring of $S$-integers in a
number field (with $S$ a finite set of places containing all of the archimedean ones). For any integer $n$ invertible in
$A$, the cohomology groups $\he^i(X , \mu_{n,X})$ are finite
for all $i \geq 0$.
%and any $n$-torsion constructible sheaf $\mathfrak{F} \in {\rm Sh}(X_{{\rm \acute{e}t}})$,
%the cohomology groups $\he^i(X , \mathfrak{F})$ are finite
%for all $i \geqslant 0$.
\end{thm}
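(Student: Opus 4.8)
The plan is to deduce the result from Deligne's finiteness theorem (Theorem~\ref{T-Constructible}) by applying it to the structure morphism and then passing through the Leray spectral sequence. First I would take $Y = T$ and let $f \colon X \to T$ be the structure morphism, which is of finite type by hypothesis. Since $A$ is either a finite field or a Dedekind domain (the ring of $S$-integers in a number field), the scheme $T = \Spec(A)$ is regular of dimension $0$ or $1$, and $n$ is invertible on $T$; thus the hypotheses of Theorem~\ref{T-Constructible} are met. As $n$ is invertible on $X$, the sheaf $\mu_{n,X}$ is finite \'etale over $X$, hence a locally constant constructible $n$-torsion sheaf. Theorem~\ref{T-Constructible} then shows that $R^q f_* \mu_{n,X} \in \mathrm{Sh}(\Xet)$ is a constructible $n$-torsion sheaf on $T$ for every $q \geq 0$.

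Next I would invoke the Leray spectral sequence for $f$ and the sheaf $\mu_{n,X}$:
$$ E_2^{p,q} = \he^p(T, R^q f_* \mu_{n,X}) \ \Longrightarrow \ \he^{p+q}(X, \mu_{n,X}). $$
For a fixed total degree $i$, only the finitely many terms with $p + q = i$ and $p, q \geq 0$ contribute to $\he^i(X, \mu_{n,X})$. Consequently, the finiteness of all the groups $\he^i(X, \mu_{n,X})$ would follow at once from the finiteness of every $E_2^{p,q}$, so the problem reduces to the following purely base-level assertion: for any constructible $n$-torsion sheaf $\mathfrak{G}$ on $T$, the groups $\he^p(T, \mathfrak{G})$ are finite for all $p \geq 0$. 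Note that no bound on the cohomological dimension of $X$ or $T$ is needed for this reduction, since the constraint $p + q = i$ already limits the number of contributing terms.

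It remains to establish this base-level finiteness, which I would handle according to the two cases for $A$. If $A$ is a finite field, then $T$ is a single point and $\he^p(T, \mathfrak{G})$ is the Galois cohomology $H^p(\widehat{\Z}, M)$ of the associated finite Galois module $M$; since the absolute Galois group of a finite field has cohomological dimension $1$, these groups vanish for $p \geq 2$ and are finite for $p = 0, 1$. If $A = \mathcal{O}_k(S)$ is a ring of $S$-integers, the required statement is the classical finiteness theorem for the \'etale cohomology of an arithmetic curve with constructible coefficients of order invertible on the base; at bottom it rests on the finiteness of the $S$-class group and the finite generation of the group of $S$-units (cf. \S\ref{S:UnramClass}.5), applied via the Kummer sequence and d\'evissage to an arbitrary constructible sheaf. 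I expect this arithmetic input to be the only substantive ingredient beyond Deligne's theorem: once constructibility of the higher direct images is granted, the spectral-sequence bookkeeping is formal, so the main obstacle is marshaling the finiteness of cohomology over the number-ring base $T$ (and, in the $2$-primary case, confirming that finiteness persists in every degree despite the failure of a uniform bound on the cohomological dimension).
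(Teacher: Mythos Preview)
Your approach is essentially the same as the paper's: apply Deligne's constructibility theorem to the structure morphism $f\colon X\to T$, then feed the Leray spectral sequence with the finiteness of $\he^p(T,\mathfrak{G})$ for constructible $n$-torsion $\mathfrak{G}$. The only differences are in how the base case is dispatched: for finite $A$ the paper bypasses Leray and cites \cite[Corollary VI.5.5]{Mil} directly for $X$, while for $A=\mathcal{O}_k(S)$ it makes your ``classical finiteness'' precise by identifying $\pi_1(T,\bar\eta)\simeq G_{K,S}$, comparing $H^p(T,\mathfrak{G})$ with $H^p(G_{K,S},\mathfrak{G}_{\bar\eta})$ via \cite[Ch.~II, Prop.~2.9]{Mil-ADT}, and invoking \cite[Thm.~8.3.20(i)]{NSW}; this last reference also absorbs the $2$-primary concern you flag.
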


\begin{proof}(Sketch)
If $A = \mathbb{F}_q$ is a finite field, then according to \cite[Corollary VI.5.5]{Mil}, the groups $H^i(X,\mathfrak{F})$ are finite for all $i$ for any finite, locally constant sheaf $\mathfrak{F}$ whose torsion is prime to char $\mathbb{F}_q$, which yields the statement in this case.

%Since for our purposes we only need
%to consider the smooth case, in the situation where $A$ is a finite
%field the required fact follows from Corollary 4.5 or Corollary 5.5
%in \cite[Ch. VI]{Mil} in conjunction with the Hochschild-Serre
%spectral sequence (cf. \cite[Ch. III, Theorem 2.20]{Mil}).

Thus, we can now assume that $A = \mathcal{O}_{K,S}$ is a ring of $S$-integers in some number field $K$,
where $S$ is a finite set of places of $K$ containing $V^K_{\infty}$. Applying Theorem \ref{T-Constructible} to the structure morphism $f \colon X \to T$, we conclude that the higher direct images $\mathrm{R}^q f_*(\mu_{n,X})$ are constructible $n$-torsion sheaves on $T$ for all $q \geq 0.$
%Applying to the
%structure morphism $f \colon X \to U$ Theorem 1.1 of the chapter
%``Th\'eor\`emes de finitude" in Deligne's book \cite{Del} (p. 233),
%we obtain that the direct images $\mathrm{R}^qf_{*}\mathfrak{F}$ are
%constructible $n$-torsion sheaves on $U$.
Next, fix an algebraic closure $K^{{\rm sep}}$ of $K$, denote by $K_S$ the maximal Galois extension of $K$ contained in $K^{{\rm sep}}$ which is unramified outside $S$, and let $G_{K,S} = \Ga (K_S/K)$. Letting $\bar{\eta} = \Spec (K^{{\rm sep}})$, we recall that the \'etale fundamental group
$\pi_1 (T, \bar{\eta})$ coincides with $G_{K,S}$ (see, e.g., \cite[Corollary 6.17]{Lenstra}), and the functor $\mathfrak{F} \mapsto \mathfrak{F}_{\bar{\eta}}$ defines an equivalence of categories between constructible sheaves on $T$ and finite discrete $G_{K,S}$-modules. Under these identifications, for any prime $\ell$ invertible on $T$, we have an isomorphism of $\ell$-primary parts
$$
H^p (T, \mathfrak{F})(\ell) \simeq H^p (G_{K,S}, \mathfrak{F}_{\bar{\eta}}) (\ell)
$$
for all $p \geq 0$ --- see \cite[Ch. II, Proposition 2.9]{Mil-ADT}. Furthermore, according to \cite[Theorem 8.3.20(i)]{NSW}, if $M$ is a finite $G_{K,S}$-module whose order is invertible in $A$, the groups $H^p (G_{K,S}, M)$ are finite for all $p \geq 0.$ It now follows that $H^p (T, \mathrm{R}^q f_* (\mu_{n,X}))$ are finite for all $p, q \geq 0.$ Finally, the Leray spectral sequence $H^p(T , \mathrm{R}^qf_{*}\mathfrak{F}) \Longrightarrow H^{p+q}(X , \mathfrak{F})$ (\cite[Ch. III, Theorem 1.18]{Mil}) yields the finiteness of
$H^i(X , \mu_{n,X})$ for all $i \geq 0.$

%Combining Proposition 2.9
%in \cite[Ch. II]{Mil-ADT} with Theorem 8.3.19 in \cite{NSW}, we
%obtain that the groups $\he^p(U , \mathrm{R}^qf_{*}\mathfrak{F})$
%are finite for all $p \geqslant 0$. Then the Leray spectral sequence
%$\he^p(U , \mathrm{R}^qf_{*}\mathfrak{F}) \Longrightarrow
%\he^{p+q}(X , \mathfrak{F})$ (\cite[Ch. III, Theorem 1.18]{Mil})
%shows that the groups $\he^i(X , \mathfrak{F})$ are all finite.
\end{proof}

\noindent {\bf Remark 10.3.} It should be pointed out that the argument sketched above actually shows that for $X$ as in Theorem \ref{T:CT}, the groups $H^i(X, \mathfrak{F})$ are finite for {\it any} $n$-torsion constructible sheaf $\mathfrak{F} \in {\rm Sh}(\Xet).$

\addtocounter{thm}{1}

\vskip3mm

\noindent The second ingredient that will be needed is Grothendieck's absolute purity conjecture, which was proved by O.~Gabber. Let us first recall the relevant notation. Suppose $i \colon Y \hookrightarrow X$ is a closed immersion of schemes and let $\mathfrak{F} \in {\rm Sh}(\Xet)$. Then for any integer $n \geq 0$, one defines sheaves $\mathcal{H}_Y^n (\mathfrak{F}) \in {\rm Sh}(\Yet)$ as follows. Set $U = X \setminus Y$ and let $j \colon U \hookrightarrow X$ be the inclusion. We let
$$
\mathfrak{F}^! = \ker (\mathfrak{F} \to j_*j^* \mathfrak{F}),
$$
where $j_*$ and $j^*$ are the corresponding pushforward and pullback functors, and define $i^! \mathfrak{F} = i^* \mathfrak{F}^!.$ It is easy to see that we obtain in this way a functor
$$
i^! \colon {\rm Sh}(\Xet) \to {\rm Sh}(\Yet)
$$
that is left exact and preserves injectives. Taking right derived functors, we arrive at the definition
$$
\mathcal{H}_Y^n(\mathfrak{F}) = R^n i^! \mathfrak{F}.
$$
In more concrete terms, the correspondence that associates to an \'etale map $f \colon V \to X$ the cohomology group
$H^n_{f^{-1}(Y)} (V, f^* \mathfrak{F})$ is a presheaf on $\Xet$, and one can consider the associated sheaf. It is supported
on $Y$ and coincides with the direct image of the sheaf $\mathcal{H}_Y^n(\mathfrak{F})$.

%$\mathcal{H}_Y^n (\mathfrak{F})$ is the sheaf on $Y$ corresponding to the functor that to any \'etale map $f \colon V \to X$ associates the cohomology group
%$H^n_{f^{-1}(Y)} (V, f^* \mathfrak{F}).$
%on $X_{{\rm \acute{e}t}}$, one defines a new sheaf $i^! \mathfrak{F}$ on $Y$ as follows: first let $\mathfrak{F}^!$ be the sheaf that to any \'etale morphism $f \colon V \to X$ associates the group $H^0_{f^{-1}(Y)} (V, \mathfrak{F}) \stackrel{{\rm def}}{=} \ker (\mathfrak{F}(V) \to \mathfrak{F}(f^{-1}(U)))$, where $U = X \setminus Y.$ One defines
\begin{thm}\label{T-Purity}{\rm (Absolute purity)}
Let $i \colon Y \hookrightarrow X$ be a closed immersion of noetherian regular schemes of pure codimension $c.$ Let $n$ be an integer that is invertible on $X$. Then for any integer $j$, we have
$$
\mathcal{H}_Y^i (\mu_{n,X}^{\otimes j}) = \left\{ \begin{array}{lr} 0 & {\rm for} \ i \neq 2c \\ \mu_{n, Y}^{\otimes (j-c)} & {\rm for} \ i = 2c \end{array} \right.
$$
\end{thm}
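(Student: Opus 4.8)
The plan is to reduce the global, sheaf-theoretic statement to a computation of local cohomology over a strictly henselian regular local ring, and then to settle that computation by dévissage on the codimension $c$, with the base case $c=1$ handled by the known cohomology of traits. Since both $i^!$ and the formation of $\mathcal{H}_Y^i$ commute with étale localization, and since the assertion is insensitive to twisting by the locally constant sheaf $\mu_{n,Y}^{\otimes(j-c)}$, I would first reduce to the case $j=c$ with $X=\Spec R$ for $R$ a strictly henselian regular local ring and $Y=V(t_1,\dots,t_c)$ cut out by part of a regular system of parameters. In this situation the stalk of $\mathcal{H}_Y^i(\mu_{n,X}^{\otimes c})$ at the closed point is the local cohomology group $H^i_Y(X,\mu_n^{\otimes c})$, so the theorem becomes the twin assertions that these groups vanish for $i\neq 2c$, and that for $i=2c$ the cycle class of $Y$ furnishes a canonical isomorphism with $\Z/n\Z$.

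Next I would dispose of the two ``soft'' halves. The semipurity statement $H^i_Y=0$ for $i<2c$ I expect to obtain by induction on $c$: writing the localization triangle relating $X$, $Y$, and the open complement $U=X\setminus Y$, one bounds the cohomology of $U$ using the strict henselianness of $R$ together with the inductive hypothesis applied to a hyperplane section, as in the geometric case treated in SGA~4. The construction of the cycle class map $\Z/n\Z \to \mathcal{H}_Y^{2c}(\mu_{n,X}^{\otimes c})$ is likewise formal: it is produced by iterating the codimension-one Gysin class and is compatible with the localization sequences. The whole theorem thus reduces to showing that this single map is an isomorphism and that $H^i_Y$ vanishes for $i>2c$.

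For the core, I would induct on $c$, using the flag $V(t_1)\supset V(t_1,t_2)\supset\cdots$ of regular subschemes (each a regular divisor in the preceding one) and the composition spectral sequence for local cohomology to reduce a regular closed immersion of codimension $c$ to the base case $c=1$, where $Y$ is a regular divisor in $X$. Here I would pass to the strictly henselian discrete valuation ring obtained by localizing at the generic point of $Y$: the absolute Galois group of the fraction field of a strictly henselian trait with separably closed residue field is the tame, procyclic prime-to-$p$ quotient, so its cohomology with $\mu_n$-coefficients is explicit and yields $H^2_Y(\mu_n)=\Z/n\Z$ with all higher groups vanishing. The remaining, and genuinely delicate, point is to propagate this computation from the trait to the full regular local ring $R$ of dimension $\geq 2$; this is where the \emph{absolute} nature of the theorem bites, since one no longer has a smooth morphism to a nice base and cannot simply invoke the relative cohomological purity of SGA~4.

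I expect this last propagation step to be the main obstacle, and it is precisely the content of Gabber's argument (as written up in \cite{Fuj}). The strategy there is not elementary: one fibres $X$ by curves over a regular base of one lower dimension and combines Gabber's affine analogue of the proper base change theorem with a continuity argument — using excellence and Popescu's approximation of $R$ by smooth algebras over the prime ring — to reduce the absolute statement back to the geometric purity available in the smooth case. Carrying this reduction out rigorously, including the verification that the cycle class remains an isomorphism after these manipulations, is the heart of the proof; for the present paper we take it as an input and refer to \cite{Fuj} for the details.
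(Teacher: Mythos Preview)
The paper does not prove this theorem at all: it states absolute purity as a known deep result of Gabber and simply cites \cite{Fuj} and \cite{Riou} for the proof, together with \cite{CT-SB}, \cite{CT-S}, and \cite{CT-Bour} for context and history. Your proposal goes well beyond this, giving a genuine outline of the argument --- the reduction to strictly henselian local rings, the twist reduction to $j=c$, semipurity in low degrees, the cycle class construction, the d\'evissage to codimension one, and the explicit computation over a trait --- before correctly isolating the hard propagation step and deferring it to \cite{Fuj}. Your sketch is accurate and identifies the real difficulty in the right place; it simply provides far more than the paper does, which treats the theorem purely as a black-box input to the appendix.
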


\vskip2mm

\noindent An account of Gabber's first proof, written up Fujiwara, can be found in \cite{Fuj}. A different, more recent approach, also due to Gabber, is explained in \cite{Riou}. Furthermore, we refer the reader to \cite{CT-SB} for an extensive discussion of examples and applications of absolute purity, as well as  \cite[p.153]{CT-S} and \cite[discussion after Theorem 4.2]{CT-Bour} regarding the history of the question.

Next, in view of absolute purity, the local-to-global spectral sequence
$$
E_2^{p,q} = \he^p (Y, \mathcal{H}_Y^q (\mu_{n,X}^{\otimes j})) \Rightarrow H_Y^{p+q} (X, \mu_{n,X}^{\otimes j})
$$
degenerates, leading to isomorphisms
\begin{equation}\label{E-IsomLG}
H_Y^i (X, \mu_{n,X}^{\otimes j}) \simeq H^{i-2c}(Y, \mu_{n,Y}^{\otimes (j-c)}).
\end{equation}
Plugging in (\ref{E-IsomLG}) into the standard long exact sequence
$$
\cdots \to H^i (X, \mu_n^{\otimes j}) \to H^i (U, \mu_n^{\otimes j}) \to H^{i+1}_Y(X, \mu_n^{\otimes j}) \to H^{i+1}(X, \mu_n^{\otimes j}) \to \cdots
$$
yields the so-called Gysin sequence
\begin{equation}\label{E-Gysin}
\cdots \to H^i (X, \mu_n^{\otimes j}) \to H^i (U, \mu_n^{\otimes j}) \to H^{i+1-2c}(Y, \mu_n^{\otimes (j-c)}) \to H^{i+1}(X, \mu_n^{\otimes j}) \to \cdots
\end{equation}

Suppose now that $A$ is a discrete valuation ring with fraction field $K$ and residue field $\kappa.$ We let $X = \Spec(A)$ and set $Y = \Spec(\kappa)$ to be the closed point and $\eta = \Spec(K)$ the generic point. Writing $H^i (A, \mu_n^{\otimes j})$ for $H^i (X, \mu_n^{\otimes j})$ and identifying the \'etale cohomology groups of $Y$ and $\eta$ with the respective Galois cohomology groups for $\kappa$ and $K$, we obtain from (\ref{E-Gysin}) the exact sequence
$$
\cdots \to H^i (A, \mu_n^{\otimes j}) \to H^i (K, \mu_n^{\otimes j}) \stackrel{\partial_A}{\longrightarrow} H^{i-1} (\kappa, \mu_n^{\otimes (j-1)}) \to H^{i+1} (A, \mu_n^{\otimes j}) \to \cdots
$$
for any integer $n$ invertible in $A$. In particular, taking $i = 2$, we obtain a {\it residue map}
\begin{equation}\label{E-ResidueMap}
\partial_A \colon {}_n\Br(K) \to H^1 (\kappa, \Z / n \Z)
\end{equation}
whose kernel is known to coincide with that of the residue map $\rho_v \colon {}_n\Br(K) \to H^1 (\kappa, \Z / n \Z)$ considered in \S\ref{S:Intro}, where $v$ is the discrete valuation of $K$ associated with $A$.

Furthermore, if $X$ is a smooth integral affine scheme of finite type over a finite field or a ring of $S$-integers in a number field with function field $K = K(X)$, then it follows from purity for discrete valuation rings together with the discussion in \cite[\S 3.4]{CT-SB} that there is an exact sequence
\begin{equation}\label{E-BrauerGroupPur}
0 \to {}_n\Br(X) \to H^2 (K, \mu_n) \stackrel{\partial}{\longrightarrow} \bigoplus_{x \in X^{(1)}} H^1(\kappa(x), \Z/ n \Z),
\end{equation}
where $\Br(X) = \he (X, \mathbb{G}_m)$ is the geometric Brauer group, the direct sum is taken over all points $x$ of codimension 1 with residue field $\kappa(x)$, and $\partial$ is the product of the residue maps (\ref{E-ResidueMap}).

%Let $X$ be a scheme. We recall that a subset $S \subset X$ is called {\it constructible} if $S$ is a finite union of locally closed subsets of $X$. Next, an abelian sheaf $\mathfrak{F}$ on the \'etale site $X_{{\rm \acute{e}t}}$ is said to be {\it finite} if all for all $x \in X$, the stalks $\mathfrak{F}_{\bar{x}}$ are finite. For simplicity, given a subscheme $i \colon Z \to X$, we denote the inverse image $i^* (\mathfrak{F})$ by $\mathfrak{F}/Z.$ We now have the following definition: an abelian sheaf $\mathfrak{F}$ on $X_{{\rm \acute{e}t}}$ is {\it constructible} if

\vskip2mm

\noindent We are now in a position to give a sketch of

\vskip1mm

%\noindent {\bf Theorem 2.} {\it Let $K$ be a finitely generated field, and $n > 1$ be an integer prime to ${\rm char}~K$. There exists a set $V$ of discrete valuations of $K$ that %satisfies conditions {\rm (A)} and {\rm (B)} and for which the unramified Brauer group ${}_n\Br(K)_V$ is finite.}
%\begin{proof}
\noindent {\it Cohomological proof of Theorem 2.} Pick a model $X$ for $K$, i.e. a smooth affine integral scheme $X$
as in Theorem \ref{T:CT} with function field $K$. By Theorem \ref{T:CT}, we see that $H^2 (X, \mu_{n,X})$ is finite. The cohomology sequence associated to the Kummer sequence
$$
1 \to \mu_{n,X} \to \mathbb{G}_{m,X} \stackrel{n}{\longrightarrow} \mathbb{G}_{m,X} \to 1
$$
then implies that ${}_n\Br(X)$ is finite. Finally, (\ref{E-BrauerGroupPur}) shows that ${}_n\Br(X)$ is precisely the unramified Brauer group ${}_n\Br(K)_V$, where $V$ is the set of discrete valuations of $K$ associated with the divisors of $X$ (obviously $V$ satisfies conditions (A) and (B)).

\hfill $\Box$

\medskip

{\small {\bf Acknowledgements.} The first-named author was supported by the Canada Research Chair Program and by an NSERC research grant.
The second-named author was partially supported by NSF grant DMS-1301800 and BSF grant 201049. The third-named author was supported by
an NSF Postdoctoral Fellowship. The second and  third authors thankfully acknowledge the hospitality of the IHES (Bures-sur-Yvette) during
the preparation of the final version of this paper.}

\vskip5mm

\bibliographystyle{amsplain}

\end{document}